\documentclass[oneside, 11pt]{amsart}

\title[] {Newton-Okounkov bodies and normal toric degenerations}
\author{Georg Merz} 

\usepackage{xcolor}
\usepackage{booktabs}
\usepackage{enumerate}
\usepackage[ruled,vlined]{algorithm2e}
\usepackage{amsmath}
\usepackage{mathrsfs}
\usepackage{amssymb}
\usepackage{amsthm}
\usepackage[all]{xy}
\usepackage{tikz}
\usepackage{mathtools}  
\mathtoolsset{showonlyrefs}
\usepackage{caption}
\usepackage{longtable}
\usepackage{makecell}
\usepackage{tabularx}
\usepackage{float}
\restylefloat{table}

\keywords{Newton-Okounkov body, toric degeneration, normal semigroups, del Pezzo surface}
\date{\today}

\address{Georg Merz,
Mathematisches Institut,
Georg-August-Universit\"at G\"ottingen,
Bunsenstra\ss e 3-5, 
D-37073 G\"ottingen,
Germany}
\email{georg.merz@mathematik.uni-goettingen.de}



\DeclareMathOperator{\proj}{Proj}

\newcommand{\struc}{\mathcal{O}}

  \theoremstyle{plain}
  \newtheorem{thm}{Theorem}[section]
  \newtheorem*{thmnonr}{Theorem}
  \newtheorem{remark}[thm]{Remark}
  \newtheorem*{quest}{Question}
  \newtheorem*{answ}{Answer}
  \newtheorem{lem}[thm]{Lemma}
  \newtheorem{cor}[thm]{Corollary}
  \newtheorem{prop}[thm]{Proposition}
  \theoremstyle{definition}
  \newtheorem{defi}[thm]{Definition}
  \newtheorem{ex}[thm] {Example}
\theoremstyle{theorem}

\usepackage{bookmark}
\begin{document}
\maketitle
\begin{abstract}
Anderson proved that  the finite generation of the value semigroup $\Gamma_{Y_\bullet}(D)$ in the construction of the Newton-Okounkov body $\Delta_{Y_\bullet}(D)$ induces a toric degeneration of the corresponding variety $X$ to some toric variety $X_0$. In this case the normalization of $X_0$ is the normal toric variety corresponding to the rational polytope $\Delta_{Y_\bullet}(D)$. Since $X_0$ is not normal in general this correspondence is rather implicit. In this article we investigate in conditions to assure that $X_0$ is normal, by comparing the Hilbert polynomial with the Ehrhart polynomial.
 In the case of del Pezzo surfaces this will result in an algorithm which outputs for a given divisor $D$ a flag $Y_\bullet$ such that the value semigroup in question is indeed normal. Furthermore, we will find flags on del Pezzo surfaces and on some particular weak del Pezzo surfaces which induce normal toric degenerations for all possible divisors at once. We will prove that in this  case  the global value semigroup $\Gamma_{Y_\bullet}(X)$ is finitely generated and normal.
\end{abstract}
\section{introduction}
Newton-Okounkov bodies are convex bodies associated to linear series on a projective variety. They were introduced by Okounkov \cite{O96} and further systematically studied by Lazarsfeld-Musta{\c{t}}{\u{a}} \cite{LM09} and Kaveh-Khovanskii \cite{KK12}. Newton-Okounkov bodies of a linear series $|V|$ are not unique but depend upon the choice of a valuation on the graded algebra of sections  $R(|V|)$. In the special case of $X$ being toric and $D$ a torus invariant divisor, one can define a valuation such that the associated Newton-Okounkov body is, up to translation, the polytope $\Delta(D)$ corresponding to $D$ in the sense of the usual toric correspondence (see \cite[Proposition 6.1]{LM09}). In general, for an arbitrary projective variety $X$ and a valuation $\nu$ the Newton-Okounkov body does not need to be rational polyhedral. However, if $\Delta_{\nu}(D)$ is rational polyhedral, one can ask the following question.
\begin{quest}
Assuming $\Delta_{\nu}(D)$ is rational polyhedral. What is the connection between $X$ and the toric variety corresponding to $\Delta_{\nu}(D)$?
\end{quest}
The answer to this question was given by D. Anderson. He showed the following.
\begin{thmnonr}[\cite{A13}]
Let $X$ be a projective variety, $D$ a very ample (Cartier) divisor, and $\nu$ a valuation-like function. Assume that the semigroup $\Gamma=\Gamma_{\nu}(D)=\{(\nu(s),k) \ | \ s\in H^0(X,\struc_{X}(kD)), \ k\in\mathbb{N}\}$ is finitely generated. Then there exists a toric degeneration of $X$ w.r.t. $D$  to the toric  variety $X_0:= \proj (k[\Gamma])$. Moreover, the normalization of $X_0$ is the normal toric variety corresponding to the polytope $\Delta_{\nu}(D)$.
\end{thmnonr}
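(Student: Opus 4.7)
The plan is to produce a flat family over $\mathbb{A}^1 = \spec k[t]$ whose fibre over $t \neq 0$ recovers the pair $(X, \struc_X(D))$ and whose fibre over $t = 0$ is $(X_0, \struc_{X_0}(1))$, via a Rees-algebra construction from the filtration on the section ring $R := \bigoplus_{k \geq 0} H^0(X, \struc_X(kD))$ induced by $\nu$. First I would observe that the valuation-like function $\nu$ equips each graded piece $H^0(X, \struc_X(kD))$ with a decreasing filtration indexed by $\Z^n$ under a fixed total order, and that each successive quotient has dimension at most one; consequently the associated graded of $R$ is canonically isomorphic, as a bigraded algebra, to the semigroup algebra $k[\Gamma]$.

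The next step is to encode this $\Z^n$-filtration as a genuine $\Z$-filtration suitable for the Rees construction. Finite generation of $\Gamma$ is used crucially here: choose generators $(\mathbf{v}_1, k_1), \dots, (\mathbf{v}_r, k_r)$ and pick an integral linear functional $\phi \colon \Z^{n+1} \to \Z$ whose restriction to these generators refines the chosen total order. Setting $F^p R := \{s \in R : \phi(\nu(s), \deg s) \geq p\}$, the associated graded algebra is still $k[\Gamma]$. Form the Rees algebra
\[
\mathcal{R} \;:=\; \bigoplus_{p \in \Z} F^p R \cdot t^{-p} \;\subset\; R[t, t^{-1}].
\]
It is $t$-torsion-free, hence $k[t]$-flat; its fibre at any $t \neq 0$ is $R$ and its fibre at $t = 0$ is $k[\Gamma]$. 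Taking $\proj$ relative to $\spec k[t]$ then yields the desired degeneration of $X$ to $X_0 = \proj(k[\Gamma])$, which is a projective toric variety because $k[\Gamma]$ is a finitely generated semigroup algebra graded by its last coordinate.

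For the claim about the normalization, I would invoke the classical fact that the normalization of $\spec k[\Gamma]$ is $\spec k[\Gamma^{\mathrm{sat}}]$, where $\Gamma^{\mathrm{sat}}$ denotes the saturation of $\Gamma$ inside the group it generates. The key identification is
\[
\Gamma^{\mathrm{sat}} \;=\; \bigl\{ (\mathbf{v}, k) \in \Z^{n+1} : \mathbf{v}/k \in \Delta_\nu(D) \bigr\},
\]
which holds because $\Delta_\nu(D)$ is by definition the slice at height one of the closed cone $\overline{\R_{\geq 0} \Gamma}$, together with the fact that $\Gamma$ generates $\Z^{n+1}$ for a very ample $D$. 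Passing to $\proj$ then identifies the normalization of $X_0$ with the standard normal projective toric variety associated to the rational polytope $\Delta_\nu(D)$.

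I expect the main obstacle to be showing that the weighted filtration $F^\bullet R$ really yields $k[\Gamma]$ on passing to the associated graded: one must verify that $\phi$ can be chosen so that distinct values of the original $\Z^n$-valued filtration never collapse into a single weight on the finitely many generators, so that multiplication in $\mathcal{R}/(t)$ genuinely is the monomial multiplication of $k[\Gamma]$. A secondary subtlety is that $X_0$ itself need not be normal, so throughout the argument one must carefully distinguish $\proj(k[\Gamma])$ from its normalization $\proj(k[\Gamma^{\mathrm{sat}}])$.
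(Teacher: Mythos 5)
The paper does not prove this statement itself but cites it from Anderson's paper, and your sketch reproduces exactly Anderson's argument: encode the $\mathbb{Z}^n$-valued filtration as a $\mathbb{Z}$-filtration via a linear functional chosen (using finite generation of $\Gamma$) to separate the relevant valuation values, form the Rees algebra over $k[t]$, use $t$-torsion-freeness for flatness, and identify the normalization of $X_0$ with $\proj$ of the saturated semigroup algebra, whose height-one slice is $\Delta_\nu(D)$. The proposal is correct and takes essentially the same route as the cited source, including flagging the genuine subtle point (that the linear functional must not collapse distinct valuation values on generators, so the associated graded is really $k[\Gamma]$).
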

Anderson's Theorem can bee seen as a generalization of the theory of SAGBI bases (see \cite[Chapter 11]{S96} for an introduction).
In the SAGBI case, one of the prerequisites is that the coordinate ring $k[X]$ of the corresponding variety $X$ needs to be contained in a polynomial ring $k[T_1,\dots, T_n]$. But this is quite a strong constraint, which we can  omit by considering the valuation $\nu$.

However, the connection between the variety $X$ and the normal toric variety corresponding to the polytope $\Delta_{\nu}(D)$ is rather implicit, since we need to normalize the variety we degenerate to. Hence, we can raise the following question.

\begin{quest}
Under which circumstances, does there exist a degeneration of $X$ to the normal toric variety corresponding to $\Delta_{\nu}(D)$?
\end{quest} 
In order to answer this question, we need to determine when the variety $\proj(k[\Gamma])$ is normal.
This is the case if and only if there is a $k\in\mathbb{N}$ such that the semigroup $k\cdot \Gamma$ is normal, i.e. $\text{Cone}(k\cdot \Gamma)\cap \mathbb{Z}^d=k\cdot \Gamma$ (see also Section \ref{sectionnormalaffine} for more details). 

We will see that the property of inducing a normal toric degeneration can indeed be checked by considering the shape of $\Delta_{Y_\bullet}(D)$, or more concretely the Ehrhart polynomial of $\Delta_{Y_\bullet}(D)$. We will define the difference between the Ehrhart polynomial of the Newton-Okounkov body and the hilbert polynomial of $D$ as the \emph{normal defect}. It is then not difficult to prove that this difference is zero if and only if $\Delta_{Y_\bullet}(D)$ induces a normal toric degeneration. This gives the following answer.
\begin{answ}
$\Delta:=\Delta_{\nu}(D)$ induces a normal toric degeneration if the Ehrhart polynomial corresponding to $\Delta$ is equal to the Hilbert polynomial corresponding to $D$.
\end{answ}

 This observation enables us to view the problem of finding a flag for a given divisor which induces a normal toric degeneration as an optimization problem. 
We will evolve this idea further in the case where $X$ is a surface. It turns out that one can formulate this optimization problem in the following form:

{\it Given a divisor $D$, find a flag $Y_\bullet$ such that the number of integral points on the boundary of $\Delta_{Y_\bullet}(D)$ is minimal.}

We will indeed prove that under some condition (e.g. if $X$ is a Mori dream surface)  such a flag always exists (see  Theorem \ref{thmexminsurf}).
If we additionally assume that the Zariski decomposition of $X$ is integral (e.g. for del Pezzo surfaces), we will give a concrete algorithm in order to find such flags. 

Finally, we focus on (weak) del Pezzo surfaces. It will turn out that in this situation, negative curves are good candidates for flags inducing normal toric degeneration. More concretely, we prove the following statement.

\begin{thmnonr}
Suppose one of the following situations.
\begin{itemize}
\item $X=X_r$ is the blow-up of $1\leq r\leq 6$ points in general position and $Y_\bullet$ is an admissible flag such that $Y_1$ is negative. 
\item $X=L_3$ is the blow-up of four points, where three of them are on a line or $X=S_6$ is the blow-up of six points on a conic. Let $Y_\bullet$ be an admissible flag such that $Y_1$ is the unique $(-2)$-curve on $X$.
\end{itemize}
Then the global semigroup 
\begin{align}
\Gamma_{Y_\bullet}(X)=\{(\nu_{Y_\bullet}(s),D) \ | \ D\in \text{Pic}(X)=N^1(X), \ s\in H^0(X,\mathcal{O}(D)) \}
\end{align}
 is finitely generated normal.
\end{thmnonr}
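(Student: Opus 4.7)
The plan is to combine the local criterion (Ehrhart polynomial equals Hilbert polynomial) recalled in the introduction with the Mori dream space structure of $X$, proceeding in three stages: finite generation of $\Gamma_{Y_\bullet}(X)$, pointwise normality of each $\Gamma_{Y_\bullet}(D)$, and global normality.

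For finite generation I would use that (weak) del Pezzo surfaces are Mori dream spaces, so the Cox ring $\bigoplus_{D \in \text{Pic}(X)} H^0(X, \mathcal{O}(D))$ is a finitely generated $k$-algebra. Choosing homogeneous generators $s_1, \ldots, s_N$ with $s_i \in H^0(X, \mathcal{O}(D_i))$ and using the valuation property $\nu_{Y_\bullet}(s \cdot t) = \nu_{Y_\bullet}(s) + \nu_{Y_\bullet}(t)$, the images $(\nu_{Y_\bullet}(s_i), D_i)$ generate a finitely generated sub-semigroup of $\Gamma_{Y_\bullet}(X)$; combined with the normality statement below, this will force equality (since sub-additivity of a valuation on a sum of monomials does not immediately produce the highest value, the matching Ehrhart--Hilbert count is what closes the gap).

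For pointwise normality, by the criterion recalled in the introduction it suffices to show for every effective $D$ that the Ehrhart polynomial of $\Delta_{Y_\bullet}(D)$ coincides with the Hilbert polynomial of $D$. Here the choice of $Y_1$ as a negative curve (or the unique $(-2)$-curve) is essential. Using the Lazarsfeld--Musta\c{t}\u{a} surface description, $\Delta_{Y_\bullet}(D)$ is the region between two piecewise linear functions on an interval $[a,b]$ determined by the Zariski decomposition of $D - t Y_1$. The integrality of the Zariski decomposition on (weak) del Pezzo surfaces, together with the fact that $Y_1 \cong \mathbb{P}^1$ (so Riemann--Roch on $Y_1$ is exact and has no $h^1$ correction), ensures that the slice-by-slice lattice point count agrees with the Hilbert polynomial of $D$. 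This is precisely the boundary-minimization principle of Theorem \ref{thmexminsurf} in the del Pezzo setting.

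For global normality I would describe $\text{Cone}(\Gamma_{Y_\bullet}(X))$ as the closure of $\bigcup_{D \in \text{Eff}(X)} \Delta_{Y_\bullet}(D) \times \{D\}$ in $\mathbb{R}^2 \times N^1(X)_\mathbb{R}$ and argue that an integer point $(v, D_0)$ in this cone forces $D_0 \in \text{Pic}(X) \cap \text{Eff}(X)$ and $v \in \Delta_{Y_\bullet}(D_0) \cap \mathbb{Z}^2$; pointwise normality then places $(v, D_0)$ in $\Gamma_{Y_\bullet}(D_0) \subset \Gamma_{Y_\bullet}(X)$. The main obstacle will be the weak del Pezzo cases $L_3$ and $S_6$: divisor classes $D$ with $D \cdot Y_1 < 0$ for $Y_1$ the $(-2)$-curve require careful case analysis of the Zariski decomposition, one must verify that the slicing still yields Ehrhart equals Hilbert when $D$ lies on or near the boundary of the effective cone, and one has to rule out any subtle failure of normality caused by the nefness (rather than ampleness) of $-K_X$ in these surfaces.
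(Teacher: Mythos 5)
Your outline reproduces the paper's overall shape (pointwise normality of $\Gamma_{Y_\bullet}(D)$, then lattice points of the global cone are valuation points, then Gordan's lemma), but two of the three stages have genuine gaps. First, the pointwise normality argument is missing the key positivity input. Integrality of the Zariski decomposition and $Y_1\cong\mathbb{P}^1$ only give you that the $t$-slice of $k\Delta_{Y_\bullet}(D)$ has $(P_{k,t}\cdot Y_1)+1$ lattice points and that $h^0(Y_1,\mathcal{O}_{Y_1}(P_{k,t}))=(P_{k,t}\cdot Y_1)+1$; to conclude that all these lattice points are valuation points you need the restricted linear series $H^0(X,\mathcal{O}_X(P_{k,t}))_{|Y_1}$ to be the \emph{complete} series on $Y_1$, i.e.\ surjectivity of the restriction map. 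In the paper this comes from Kawamata--Viehweg vanishing applied to $D-Y_1-K_X$, which requires checking that $-K_X-Y_1$ is big and nef for every curve $Y_1$ in the list (Theorem \ref{thmnormaldegminusone}, Lemma \ref{lemsemnorm}); this check is exactly what fails for $r=7,8$ (e.g.\ $-K_{X_7}-C=E_1$ for $C=3H-2E_1-E_2-\dots-E_7$), and nothing in your argument uses the hypothesis $r\leq 6$, which is a sign the decisive step is absent. Relatedly, for $L_3$ and $S_6$ integrality of the Zariski decomposition is not automatic (Theorem \ref{thmzarintegral} only bounds denominators by $2^{\rho-1}$ when $(-2)$-curves are present); the paper recovers it by noting that $Y_1$ is never a component of $N_t$, so the negative parts that occur are supported on $(-1)$-curves. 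You flag these weak del Pezzo issues, and the numerical-versus-valuative question on the boundary of the effective cone (the paper's Lemma \ref{lemnumval}), as ``obstacles,'' but they are not optional refinements: without them the slice count and the global step do not close.

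Second, your finite-generation stage runs backwards. Finite generation of the Cox ring does not imply that $(\nu_{Y_\bullet}(s_i),D_i)$ generate $\Gamma_{Y_\bullet}(X)$: the valuation of a sum of monomials in the $s_i$ is only bounded below by the minimum, so the value semigroup of a finitely generated algebra can fail to be finitely generated (this is the SAGBI phenomenon; the true implication is the opposite one, namely the paper's Theorem \ref{thmcox}). The workable route, which your third stage almost describes, is to first establish that the global Newton--Okounkov cone is rational polyhedral -- in the paper this is Theorem \ref{thmglobalok}, giving explicit generators from the Zariski chamber structure -- and then deduce finite generation \emph{and} normality simultaneously from Gordan's lemma once every lattice point $(a,D)$ of the cone is shown to be a valuation point via Lemma \ref{lemnumval} and pointwise normality. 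As written, your proposal neither proves rational polyhedrality of the global cone nor supplies the vanishing statement that makes the lattice-point count work, so the argument does not yet constitute a proof.
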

In order to proof such a statement, one first needs to prove the finite generation and normality of the value semigroup $\Gamma_{Y_\bullet}(D)$ for all big divisors $D$.
One main ingredient of such a proof is the fact that the divisors which occur in the construction of Newton-Okounkov bodies with respect to the above flags, admit integral Zariski decompositions. Another one is the fact that $-K_X+Y_1$ is big and nef, which shows that the restriction morphism of every nef divisor on $X$ to the curve $Y_1$ is surjective.
After one has established such a fact, it is necessary to consider what happens when $D$ moves to the boundary of the effective cone. We will prove that the numerical and the  valuative Newton-Okounkov body in this case coincide. Then the above statement will follow from Gordan's lemma.

We end the article with two examples, which illustrate our results.
\section*{Acknowledgement}
The author would like to thank Henrik Sepp\"anen for valuable discussions and helpful suggestions for improving this article.
\section{Preliminaries}
In this section we briefly describe normal affine semigroups, the construction of Newton-Okounkov bodies, its connection to toric degenerations and   introduce the notion of Ehrhart polynomials.
Note that all varieties mentioned in this article will be defined over an algebraically closed field $k$ of characteristic $0$. Moreover, a divisor will always mean a Cartier divisor.
\subsection{Normal affine semigroups}\label{sectionnormalaffine}
In this article we will only consider semigroups contained in $\mathbb{N}^d$. So whenever, we talk about about a semigroup, we mean a set $\Gamma\subset \mathbb{N}^d$ which is closed under addition.

An \emph{affine semigroup} is a semigroup which is finitely generated.
We denote the group generated by a semigroup $\Gamma$ by $G(\Gamma)$. 
We call the semigroup $\Gamma$ a \emph{normal semigroup} if for all $g\in G(\Gamma)$ and $n\in \mathbb{N}$ such that $n\cdot g\in \Gamma$, it follows that $g\in \Gamma$. Equivalently this means that $\text{Cone}(\Gamma)\cap G(\Gamma)=\Gamma$. For more details on normal semigroups we refer to \cite[2.B]{BG09}.

When $D$ is a big divisor on a $d$-dimensional variety, and $Y_\bullet$ is an admissible flag on $X$, we know that $G(\Gamma_{Y_\bullet}(D))=\mathbb{Z}^d$ (see \cite[Lemma 2.2]{LM09}).
In this case, $\Gamma_{Y_\bullet}(D)$ is normal if all integral points of $\text{Cone}(\Gamma_{Y_\bullet}(D))$ are valuation points.

The connection to algebraic geometry comes with the fact that an affine semigroup $\Gamma$ is normal if and only if the algebra $k[\Gamma]$ is normal (see \cite[Lemma 4.39]{BG09}).
Furthermore, the projective variety $X=\proj(k[\Gamma])$ is projectively normal if $k[\Gamma]$, and thus $\Gamma$ is normal.
However, $X$ is normal if and only if there is an $m\in \mathbb{N}$ such that the $k[\Gamma]^{(m)}:= \bigoplus_{k\in \mathbb{N}} k[\Gamma]_{mk}$ is normal. 
But one can easily see that $k[\Gamma]^{(m)}=k[m\Gamma]$. 
Thus $\proj(k[\Gamma])$ is normal if and only if there is an integer $m$ such that $m\Gamma$ is normal.

Again, if $\Gamma=\Gamma_{Y_\bullet}(D)$, the variety $\proj(\Gamma)$ is normal if and only if after passing to an $m$-th multiple of $D$, all the integral points of $\text{Cone}(\Gamma_{Y_\bullet}(mD))$ are valuation points.

\subsection{Newton-Okounkov bodies}
Let $X$ be a $d$-dimensional projective variety and $D$ a big divisor.
We consider $\mathbb{Z}^d$ as an ordered group by choosing the lexicographical order.
Let 
\begin{align}
\nu\colon \bigsqcup_{D\in \text{Pic}(X)}H^0(X,\mathcal{O}(D)) \setminus \{0\}\to \mathbb{Z}^d
\end{align}
be a \emph{valuation-like function}. This is a function having the following properties:
\begin{itemize}
\item
$ \nu(f+g) \geq  \operatorname{min} \{\nu(f),\nu(g)\}$ for $f,g\in H^0(X,\struc_X(kD))$
\item $\nu(f\otimes g)=\nu(f)+\nu(g)$ for $f\in H^0(X,\struc_X(m_1D))$ and $g\in H^0(X,\struc_X(m_2D)$.
\end{itemize}
Additionally, we also pose the following conditions on $\nu$.
\begin{itemize}
\item $\nu$ has one dimensional leaves (see \cite[Section 2]{KK12} for more details)
\item The group generated by $\{ (\nu(f),k) \ | \ k\in\mathbb{N}, \ f\in H^0(X,\struc_X(kD))\}$ is equal to $\mathbb{Z}^{d+1}$.
\end{itemize}

Then we define the semigroup 
\[
\Gamma_{\nu}(D):= \{ (\nu(f),k) \ | \ k\in\mathbb{N}, \ f\in H^0(X,\struc_X(kD))\}\subseteq \mathbb{Z}^d\times \mathbb{N}.\]
The \emph{Newton-Okounkov body} of $D$ with respect to $\nu$ is given by
\begin{align}
\Delta_{\nu}(D)=\overline{\operatorname{Cone}(\Gamma_{\nu}(D))}\cap \left(\mathbb{R}^d\times \{1\}\right).
\end{align} 
In this article, we are mainly interested in valuation-like functions induced by flags $Y_\bullet$, which we denote by $\nu_{Y_\bullet}$. For
details on their construction we refer to \cite{LM09}.
\subsection{Toric degenerations}
The connection between toric degenerations and Newton-Okounkov bodies was first established in \cite{A13}. Before phrasing the main result of interest let us make explicit what we mean by a toric degeneration.
\begin{defi}
Let $X$ be projective variety. Let $D$ be a very ample divisor on $X$. We say that $X$ \emph{admits a toric degeneration with respect to $D$} if there is a flat projective family $p\colon \mathcal{X}\to \mathbb{A}^1$ such that the zero fiber $X_0:=p^{-1}(0)$ is a toric variety and $\mathcal{X}\setminus X_0$ is isomorphic to $X\times (\mathbb{A}^1\setminus \{0\})$.
Furthermore, there is a divisor $\mathcal{D}$ on $\mathcal{X}$ such that it restricts on fibers $X_t\cong X$ for $t\neq 0$ to the divisor $D$ and on $X_0$ to an ample divisor $D_0$.
We call it a \emph{normal toric degeneration} if $X_0$ is normal. We call it a \emph{projectively normal toric degeneration} if $D_0$ is very ample and $X_0$ is projectively normal with respect  to the embedding given by $D_0$.
\end{defi}

The main result in \cite{A13} can be summarized in the following theorem.
\begin{thm}[\cite{A13}]Let $X$ be a projective variety, $D$ a very ample  divisor, and $\nu$ a valuation-like function. Assume that the semigroup $\Gamma=\Gamma_{\nu}(D)=\{(\nu(s),k) \ | \ s\in H^0(X,\struc_{X}(kD)),\ k\in\mathbb{N}\}$ is finitely generated. Then there exists a toric degeneration of $X$ with respect to $D$  to the toric  variety
$X_0:= \proj (k[\Gamma])$. Moreover, the normalization of $X_0$ is the normal toric variety corresponding to the polytope $\Delta_{\nu}(D)$.
\end{thm}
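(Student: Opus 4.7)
The plan is to realize the claimed degeneration as a Rees-algebra construction attached to the valuation filtration. First I extend $\nu$ to the homogeneous section ring $R := R(D) = \bigoplus_{k\ge 0} H^0(X,\struc_X(kD))$, obtaining on each graded piece $R_k$ a decreasing $\Z^d$-filtration by the lex order of $\nu$-values. The one-dimensional leaves hypothesis forces each successive quotient to be either zero or one-dimensional, spanned by the class of a section of value $\alpha$, and summing over $k$ this yields a canonical $\N$-graded isomorphism $\operatorname{gr}_\nu R \cong k[\Gamma]$.

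To interpolate between $R$ and its associated graded, I convert the multigraded filtration into a single $\N$-filtration. Because $\Gamma$ is finitely generated, one may choose a linear functional $w\colon \Z^d \to \Z$ that refines the lex order on the finitely many semigroup generators, and hence by additivity on all of $\Gamma$; the induced filtration $F^n R_k := \{s\in R_k : w(\nu(s))\ge n\}$ then has the same associated graded $k[\Gamma]$. The Rees algebra
\begin{align}
\mathcal{R} := \bigoplus_{n\ge 0} t^{-n} F^n R \;\subset\; R[t,t^{-1}]
\end{align}
is a $k[t]$-algebra, finitely generated (via finite generation of $R$, which uses very-ampleness of $D$, together with finite generation of $\Gamma$), flat over $k[t]$ (torsion-free as a subring), and satisfies $\mathcal{R}[t^{-1}]\cong R[t,t^{-1}]$ and $\mathcal{R}/t\mathcal{R}\cong k[\Gamma]$. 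The family $\mathcal{X} := \proj_{k[t]}(\mathcal{R})\to \spec k[t] = \mathbb{A}^1$ together with the relative divisor $\mathcal{D}$ coming from $\mathcal{O}_{\mathcal{X}}(1)$ then has general fiber $(X,D)$ — via very-ampleness, which identifies $X$ with $\proj R$ — and special fiber the toric variety $X_0 = \proj(k[\Gamma])$, whose torus action is the one induced by the grading of $k[\Gamma]$ by the group $G(\Gamma) = \Z^{d+1}$.

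For the normalization claim, the integral closure of the domain $k[\Gamma]$ in its fraction field $k(\Z^{d+1})$ is the semigroup algebra $k[\overline{\Gamma}]$ of the saturation $\overline{\Gamma} := \operatorname{Cone}(\Gamma)\cap \Z^{d+1}$, which is finitely generated by Gordan's lemma; since $\operatorname{Cone}(\Gamma)$ is by construction the cone over $\Delta_\nu(D)\times\{1\}$, the standard toric dictionary identifies $\proj(k[\overline{\Gamma}])$ with the normal projective toric variety associated to the rational polytope $\Delta_\nu(D)$. The main technical obstacle lies in the simultaneous verification of flatness of $\mathcal{R}$ and the identification $\mathcal{R}/t\mathcal{R}\cong k[\Gamma]$: the subtlety is that the lex order on $\Z^{d+1}$ is not induced by any single linear weight globally, so finite generation of $\Gamma$ is indispensable for reducing to a usable $\Z$-grading whose associated graded still recovers the semigroup algebra.
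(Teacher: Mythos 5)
This theorem is quoted from Anderson's paper \cite{A13}; the present paper supplies no proof of its own, and your outline (pass to the associated graded of the valuation filtration, convert the $\Z^{d}$-filtration to a single $\N$-filtration via a linear weight using finite generation of $\Gamma$, form the Rees algebra over $k[t]$, take relative $\proj$, and identify the normalization of $\proj(k[\Gamma])$ with the toric variety of the saturation, hence of $\Delta_\nu(D)$) is exactly the architecture of the cited proof. The flatness, general-fibre, special-fibre, and normalization parts of your write-up are fine.

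There is, however, a genuine gap at the step you yourself single out as the technical heart. You choose $w$ so that it ``refines the lex order on the finitely many semigroup generators, and hence by additivity on all of $\Gamma$'': that deduction is false. Additivity only transfers the inequalities to sums of comparable pairs; it says nothing about comparisons between two arbitrary distinct elements of $\Gamma$ of the same degree, and indeed no single rational linear functional can refine lex on all of $\Gamma$ (for $d\geq 2$ the differences of same-degree value vectors generate $\Z^{d}$, so any fixed $w$ eventually takes equal values on distinct points of some $\Gamma_k$). Note also that dimension counts are automatic ($\dim_k \operatorname{gr}_F R_k=\dim_k R_k=\vert\Gamma_k\vert$ for any $w$), so the danger is not counting but the algebra structure: with a $w$ that only separates the generators, a relation among the generating sections may contain two monomials whose $\nu$-values are ordered one way by lex and the other way (or tied) by $w$, in which case $\mathcal{R}/t\mathcal{R}$ is a further degeneration of $k[\Gamma]$ (possibly non-reduced) rather than $k[\Gamma]$ itself, and the special fibre is not the claimed toric variety. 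The correct repair, which is precisely where finite generation enters in \cite{A13}, is: first show that finitely many sections $s_1,\dots,s_m$ whose values generate $\Gamma$ actually generate the ring $R$ (this uses the one-dimensional leaves and is missing from your sketch), and then choose $w$ to reproduce the lex comparisons on the \emph{finite} set of $\nu$-values of the monomials occurring in a finite generating set of the ideal $I$ of relations among the $s_i$; this forces $\operatorname{in}_w(I)$ to equal the toric ideal of $\Gamma$ and hence $\operatorname{gr}_F R\cong k[\Gamma]$. As written, your proposal asserts the conclusion of this step without a valid argument for it.
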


For the sake of clarity we want to make it precise what it means that a Newton-Okounkov body induces a normal toric degeneration.

\begin{defi}
Let $X$ be projective variety. Let $D$ be a big divisor on $X$ and $\nu$ a valuation-like function. We say that $\Delta_{\nu}(D)$ 
\emph{induces a  toric degeneration} if $\Gamma_{\nu}(D)$ is finitely generated.
We say it induces a \emph{normal toric degeneration} if in addition $\proj(k[\Gamma_\nu])$ is normal.
\end{defi}

\subsection{Ehrhart theory}
Let $\Delta \subseteq \mathbb{R}^d$ be a convex body with non empty interior. We define the \emph{Ehrhart function} $h_{\Delta}\colon \mathbb{N}\to \mathbb{N}$ by setting 
\begin{align}
h_\Delta(k):= \vert \left(k \Delta \cap \mathbb{Z}^d\right) \vert.
\end{align} 
Now, let $\Delta\subseteq \mathbb{R}^d$ be a lattice polytope, i.e. a polytope with integral extreme points. Then there is a polynomial $P_{\Delta}=\sum_{i=0}^{d} a_it^i\in \mathbb{C}[t]$ such that $P_{\Delta}(k)=h_\Delta(k)$ for all $k\in \mathbb{N}$.
We call $P_{\Delta}$ the \emph{Ehrhart polynomial} corresponding to $\Delta$.
Some basic facts are the following:
\begin{itemize}
\item The degree of $P_\Delta$ is $d$.
\item $a_d$ is equal to $\operatorname{vol}(\Delta)$.
\item We have $a_0=1$.
\item Let $F$ be a facet of $\Delta$, and let $L_{F}$ be the induced lattice on that facet. Let furthermore $\operatorname{vol}(F)$ be the volume of $F$ with respect to the lattice $L_{F}$.
Then $a_{d-1}$ is equal to half the sum of $\operatorname{vol}(F)$ over all facets $F$ of $\Delta$.
\end{itemize}

\section{Newton-Okounkov bodies and normal toric degenerations}
In this section we want to establish the connection between the Ehrhart polynomial of $\Delta_{Y_\bullet}(D)$ and normal toric degenerations induced by $\Delta_{Y_\bullet}(D)$.

\subsection{Normal defect}
As we have already mentioned, the toric variety $X_0=\proj(k[\Gamma])$ is not necessarily normal. In order to measure the failure of normality, we introduce the following.
\begin{defi}\label{defdef}
Let $X$ be a projective variety, $D$ a big divisor on $X$ and $\nu$ a valuation-like function. Let $h_D\colon \mathbb{C}\to \mathbb{C}$ be the Hilbert function of $D$, i.e. $h_D(k)=\dim \left( H^0(X,\struc_{X}(kD))\right)$ for $k>0$. Let $\Delta_{\nu}(D)$ be the Newton-Okounkov body, and $h_{\Delta_{\nu}(D)}\colon \mathbb{Z}\to \mathbb{Z}$ the corresponding Ehrhart function, i.e. 
$h_{\Delta_{\nu}(D)}(k)=\vert k \Delta_{\nu}(D)\cap \mathbb{Z}^d \vert$.
We call the function 
\begin{align}
\operatorname{Def}_{\nu,D}:=(h_{\Delta_{\nu}(D)}-h_D) \colon \mathbb{N}\to \mathbb{N}.
\end{align}
the \emph{normal defect}.
\end{defi}
The next theorem justifies the name normal defect.
\begin{thm} \label{thmnormaldef}
Let $X$ be a projective variety, $D$ a very ample divisor on $X$ and  $\nu$ a valuation-like function. Then a rational polyhedral Newton-Okounkov body 
$\Delta_{\nu}(D)$ induces a normal toric degeneration if and only if $\operatorname{Def}_{\nu,kD}=0$ for $k\gg 0$ divisible enough. 
\end{thm}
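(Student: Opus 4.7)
The plan is to translate the vanishing of the normal defect into a statement about integral points of the closed cone $\overline{\operatorname{Cone}(\Gamma_\nu(D))}$, and then apply the criterion for normality of $\proj(k[\Gamma_\nu(D)])$ recalled in Section~\ref{sectionnormalaffine}. Since $\nu$ has one-dimensional leaves, $h_D(k) = |\Gamma_\nu(D) \cap (\mathbb{Z}^d \times \{k\})|$; rational polyhedrality of $\Delta_\nu(D)$ makes $\overline{\operatorname{Cone}(\Gamma_\nu(D))}$ a closed rational polyhedral cone whose height-$k$ slice is $k \Delta_\nu(D)$, so $h_{\Delta_\nu(D)}(k) = |\overline{\operatorname{Cone}(\Gamma_\nu(D))} \cap (\mathbb{Z}^d \times \{k\})|$. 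Using $\Delta_\nu(mD) = m \Delta_\nu(D)$ and the identification of $\Gamma_\nu(mD)$ with $\Gamma_\nu(D) \cap (\mathbb{Z}^d \times m\mathbb{N})$, this rewrites $\operatorname{Def}_{\nu, mD} \equiv 0$ as the assertion that every integral point of $\overline{\operatorname{Cone}(\Gamma_\nu(mD))}$ is a valuation point, equivalently that $\Gamma_\nu(mD)$ is a normal semigroup.

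For the forward direction, suppose $\Gamma := \Gamma_\nu(D)$ is finitely generated and $\proj(k[\Gamma])$ is normal. Then $\operatorname{Cone}(\Gamma)$ is already closed, and Section~\ref{sectionnormalaffine} produces an $m_0$ with $\Gamma_\nu(m_0 D)$ normal; by the dictionary above this reads $\operatorname{Def}_{\nu, m_0 D} = 0$. Normality is preserved under passage to Veronese sub-semigroups---$\operatorname{Cone}(\Gamma_\nu(km_0 D))$ is the restriction of $\operatorname{Cone}(\Gamma_\nu(m_0 D))$ to the sublattice $\mathbb{R}^d \times k\mathbb{R}$, and its generated group lies inside $\mathbb{Z}^d \times k\mathbb{Z}$---so $\operatorname{Def}_{\nu, km_0 D} = 0$ for every $k \geq 1$.

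For the backward direction, suppose $\operatorname{Def}_{\nu, m_0 D} = 0$. The dictionary immediately gives $\Gamma_\nu(m_0 D) = \overline{\operatorname{Cone}(\Gamma_\nu(m_0 D))} \cap \mathbb{Z}^{d+1}$, and Gordan's lemma---applicable because $\Delta_\nu(D)$ is rational polyhedral---then shows this semigroup is finitely generated and normal. To upgrade to finite generation of $\Gamma$ itself, introduce $\Gamma' := \overline{\operatorname{Cone}(\Gamma)} \cap \mathbb{Z}^{d+1}$, which is finitely generated by Gordan. The algebra $k[\Gamma']$ is integral---hence finite---over its Veronese $k[\Gamma']^{(m_0)} = k[\Gamma_\nu(m_0 D)]$, since any $x \in k[\Gamma']_n$ satisfies $x^{m_0} \in k[\Gamma']^{(m_0)}$. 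Because $\Gamma + \Gamma_\nu(m_0 D) \subseteq \Gamma$, the algebra $k[\Gamma]$ is a $k[\Gamma_\nu(m_0 D)]$-submodule of the noetherian module $k[\Gamma']$, hence itself a finitely generated module, and therefore a finitely generated $k$-algebra. Combined with normality of the Veronese $\Gamma_\nu(m_0 D)$, Anderson's theorem now delivers the normal toric degeneration.

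The main obstacle is precisely this last finite-generation step: the hypothesis $\operatorname{Def}_{\nu, kD} = 0$ controls only the graded pieces of $\Gamma$ at degrees divisible by $k$, so it does not directly compare the remaining pieces of $\Gamma$ with the lattice points of the cone. The detour through the normalization $\Gamma'$ and the noetherianity of the integral extension $k[\Gamma'] \supset k[\Gamma_\nu(m_0 D)]$ resolves this cleanly and is where rational polyhedrality of $\Delta_\nu(D)$ enters essentially.
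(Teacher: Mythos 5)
Your proposal is correct, and its core is the same as the paper's: use the one-dimensional-leaves property to identify $h_{kD}(m)$ with the number of valuation points at level $m$ of $\Gamma_\nu(kD)$, so that vanishing of the defect at a suitable multiple says exactly that every lattice point of the (closed, rational polyhedral) cone over $\Delta_\nu(kD)$ is a valuation point, and then invoke Gordan's lemma; your forward direction is the same count run in reverse. Where you genuinely go beyond the paper is the final step of the backward direction: the paper stops once it has shown that the Veronese-type semigroup $\Gamma_\nu(kD)$ is a normal affine semigroup, which, measured against the paper's own definition of ``induces a normal toric degeneration,'' leaves the finite generation of $\Gamma_\nu(D)$ itself unaddressed (finite generation and normality of a Veronese sub-semigroup do not formally imply finite generation of the whole semigroup). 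Your detour through $\Gamma'=\overline{\operatorname{Cone}(\Gamma)}\cap\mathbb{Z}^{d+1}$, together with the observation that $k[\Gamma]$ is a $k[\Gamma_\nu(m_0D)]$-submodule of the finite module $k[\Gamma']$, closes this point cleanly; likewise, your forward direction produces $\operatorname{Def}_{\nu,km_0D}=0$ for all $k\geq 1$, which matches the ``$k\gg 0$ divisible enough'' phrasing more precisely than the single $k$ produced in the paper. The one caveat, which you share with the paper, is the silent use of the standing assumption that the relevant value semigroups generate the full lattice $\mathbb{Z}^{d+1}$ when passing between normality of the semigroup and the statement that all lattice points of its cone are valuation points.
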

\begin{proof}
Suppose first that $\Delta_{\nu}(D)$ induces a normal toric degeneration. This means in particular that the semigroup $\Gamma:=\Gamma_{\nu}(D)$ is finitely generated. Suppose $\Gamma$ is generated in degree $k$. Hence, we can compute $\Delta_{\nu}(kD)$ by taking the convex hull of $\Gamma_k$.
By increasing $k$ even more, we might assume that $k\Gamma=\Gamma_{\nu}(kD)$ is a normal affine semigroup. This means that all integral points in $C:=\operatorname{Cone}(k\Gamma)$ are indeed valuation points, i.e. lie in $k\Gamma$. Consider all the integral points of $C$ at level $m$. They can be identified with integral points in $m\Delta_{\nu}(kD)$. There exists $h_{\Delta_{\nu}(kD)}(m)$ many of them. However, the number of different valuation points in $k\Gamma$ of level $m$ is equal to $\dim (H^0(X,\struc_X(mkD))=h_{kD}(m)$. By the assumption that $k\Gamma$ is normal, they both agree. This proves the vanishing of the normal defect.

Now let $k \in \mathbb{N}$ such that the normal defect $\operatorname{Def}_{\nu,kD}$ is zero.  As in the previous case it follows that for each $m\in \mathbb{N}$, there are $h_{\Delta_{\nu}(kD)}(m)=\dim H^0(X,\mathcal{O}_{X}(kmD))$ integral points in the $m$-th level of $k\Gamma$.
This proves that all these integral points are valuative, i.e.
$\operatorname{Cone}(k \Gamma)\cap \left(\mathbb{Z}^{d}\times \{m\} \right)=(k\Gamma)_m$. Hence, by Gordan's lemma, $k\Gamma$ is  a normal affine  semigroup. This proves the claim. 
\end{proof}

Let us now denote by $P_D$ the Hilbert polynomial corresponding to the ample divisor $D$. This means that $P_D$ is the polynomial such that 
$P_D(k)=h_D(k)$ for $k\gg 0$. 
\begin{cor}
Let $X$, be a projective variety, $D$ a very ample divisor on $X$ and  $\nu$ a valuation-like function. Then an integral polyhedral Newton-Okounkov body 
$\Delta_{\nu}(D)$ induces a normal toric degeneration if and only if $P_{\Delta_{\nu}(D)}=P_{D}$. 
\end{cor}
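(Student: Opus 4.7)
The plan is to derive this corollary directly from Theorem \ref{thmnormaldef} by promoting the value-by-value condition $\operatorname{Def}_{\nu,kD}\equiv 0$ to the polynomial-level identity $P_{\Delta_\nu(D)}=P_D$. Two elementary identities make this possible: the homogeneity $\Delta_\nu(kD)=k\Delta_\nu(D)$ of the Newton-Okounkov body, which yields $P_{\Delta_\nu(kD)}(m)=P_{\Delta_\nu(D)}(km)$, and the trivial $P_{kD}(m)=P_D(km)$ coming from the definition of the Hilbert polynomial. The integrality hypothesis on $\Delta_\nu(D)$ ensures that the Ehrhart function agrees with the Ehrhart polynomial at \emph{every} integer, not just asymptotically; this is the ingredient that makes the correspondence between the two polynomials sharp.

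For the forward direction, I would assume that $\Delta_\nu(D)$ induces a normal toric degeneration and apply Theorem \ref{thmnormaldef} to obtain some sufficiently divisible $k$ with $\operatorname{Def}_{\nu,kD}(m)=0$ for all $m\in\mathbb{N}$. Using Ehrhart's theorem on the lattice polytope $k\Delta_\nu(D)$ rewrites this as $P_{\Delta_\nu(D)}(km)=h_{kD}(m)$, and for $m\gg 0$ the right-hand side equals $P_D(km)$. Two polynomials in one variable that agree on the infinite arithmetic progression $\{km:m\gg 0\}$ must be identical, so $P_{\Delta_\nu(D)}=P_D$.

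For the converse, assuming $P_{\Delta_\nu(D)}=P_D$ I would choose $k$ large enough so that Serre vanishing gives $H^i(X,\mathcal{O}_X(mkD))=0$ for all $i>0$ and all $m\geq 1$, whence $h_{kD}(m)=P_D(km)$ for every $m\geq 1$; the case $m=0$ is trivial since both Hilbert and Ehrhart values equal $1$. Combined with $h_{\Delta_\nu(kD)}(m)=P_{\Delta_\nu(D)}(km)$ (from integrality) and the standing hypothesis, this gives $\operatorname{Def}_{\nu,kD}\equiv 0$, and Theorem \ref{thmnormaldef} supplies a normal toric degeneration. The only place where care is needed, and the only mildly subtle step, is this last one: Theorem \ref{thmnormaldef} requires the normal defect to vanish on all of $\mathbb{N}$, not merely for large $m$, so one must genuinely invoke both the integrality of the polytope (to control small Ehrhart values) and Serre vanishing (to control small Hilbert values) before taking $k$ sufficiently divisible.
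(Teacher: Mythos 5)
Your proof is correct and follows essentially the same route as the paper: it deduces the corollary from Theorem \ref{thmnormaldef} together with the fact that the Hilbert and Ehrhart functions eventually agree with their polynomials. Your extra care in the converse direction (using integrality of the polytope and a sufficiently large $k$ so that the defect vanishes at every level, not just asymptotically) merely spells out a detail the paper's one-line proof leaves implicit.
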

\begin{proof}
This follows from the above Theorem and the fact that $h_D(k)=P_D(k)$ and $h_{\Delta_{\nu}(D)}(k)=P_{\Delta_{\nu}(D)}(k)$ for $k\gg 0$.
\end{proof}


The next two corollaries demonstrate that the condition that $\Delta_{\nu}(D)$ induces a normal toric degeneration, is completely determined by the class of $D$ and the shape of $\Delta_{\nu}(D)$.

\begin{cor}
Let $X$ be a projective variety, $Y_\bullet$ an admissible flag, and $D$ and $D^\prime$  be two numerical equivalent ample line bundles on $X$. Then $\Delta_{\nu}(D)$ induces a normal toric degeneration if and only if $\Delta_{\nu}(D^\prime)$ does.  
\end{cor}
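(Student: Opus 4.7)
The plan is to reduce the statement to the previous corollary: $\Delta_{\nu}(D)$ induces a normal toric degeneration if and only if $P_{\Delta_{\nu}(D)}=P_{D}$. Hence it suffices to verify that both sides of this equality are invariants of the numerical equivalence class of $D$, so that the equality holds for $D$ precisely when it holds for $D^\prime$.

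For the right hand side, I would use that numerically equivalent line bundles have equal Euler characteristics (this is one of the equivalent characterisations of numerical equivalence, and follows from asymptotic Riemann--Roch). Since $D$ and $D^\prime$ are ample, Serre vanishing gives $H^i(X,\mathcal{O}(kD))=H^i(X,\mathcal{O}(kD^\prime))=0$ for $i>0$ and $k\gg 0$, so $h_D(k)=\chi(kD)=\chi(kD^\prime)=h_{D^\prime}(k)$ eventually; therefore $P_D=P_{D^\prime}$.

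For the left hand side, I would invoke the theorem of Lazarsfeld--Musta{\c{t}}{\u{a}} (\cite{LM09}), which shows that for a fixed admissible flag $Y_\bullet$ the Newton--Okounkov body $\Delta_{Y_\bullet}(D)$ of a big divisor depends only on the numerical equivalence class of $D$; since $D$ and $D^\prime$ are ample they are in particular big, so $\Delta_{\nu}(D)=\Delta_{\nu}(D^\prime)$ as subsets of $\mathbb{R}^d$, whence $P_{\Delta_{\nu}(D)}=P_{\Delta_{\nu}(D^\prime)}$. Combining these two invariances with the previous corollary yields the equivalence. There is no real obstacle here: the corollary is a formal consequence of the numerical invariance of the Hilbert polynomial on the ample cone together with the numerical invariance of Newton--Okounkov bodies of big divisors.
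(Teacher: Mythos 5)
Your proposal is correct and follows essentially the same route as the paper: the paper likewise reduces to the numerical invariance of the Hilbert polynomial (via Hirzebruch--Riemann--Roch, which is what your Euler-characteristic-plus-Serre-vanishing argument amounts to) together with the numerical invariance of the Newton--Okounkov body from \cite[Proposition 4.1]{LM09}, concluding that the normal defect, equivalently the equality of Ehrhart and Hilbert polynomials, depends only on the numerical class.
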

\begin{proof}
First of all, the Newton-Okounkov body of a divisor depends only on its class \cite[Proposition 4.1]{LM09}.
Moreover, it follows from Hirzebruch-Riemann-Roch that the Hilbert polynomial of an ample divisor also depends only on the numerical class.
Hence, the normal defect of $kD$, does only depend on the numerical class for $k\gg 0$.
\end{proof}

\begin{cor}
Let $X$ be a projective variety, $\nu$ and $\nu^\prime$ be valuation-like functions, and $D$ an ample divisor on $X$. Suppose $\Delta_{\nu}(D)=\Delta_{\nu^\prime}(D)$. Then $\Delta_{\nu}(D)$ induces a normal toric degeneration if and only if $\Delta_{\nu^\prime}(D)$ does.
\end{cor}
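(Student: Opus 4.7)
The plan is to reduce the statement to Theorem \ref{thmnormaldef} and observe that the normal defect $\operatorname{Def}_{\nu,kD}$ depends on $\nu$ only through the convex body $\Delta_{\nu}(D)$. First I would note that if either side of the desired equivalence holds, then the common Newton-Okounkov body $\Delta_\nu(D)=\Delta_{\nu^\prime}(D)$ is forced to be rational polyhedral, since the corresponding semigroup is then finitely generated. This lets me invoke Theorem \ref{thmnormaldef} simultaneously for $\nu$ and $\nu^\prime$, whereby the corollary reduces to the equality of functions $\operatorname{Def}_{\nu,kD} = \operatorname{Def}_{\nu^\prime,kD}$ for every $k \in \mathbb{N}$.

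Unravelling Definition \ref{defdef}, one has $\operatorname{Def}_{\nu,kD}(m) = h_{\Delta_\nu(kD)}(m) - h_{kD}(m)$. The Hilbert term $h_{kD}(m) = \dim H^0(X,\mathcal{O}_X(kmD))$ is manifestly independent of the choice of valuation-like function. For the Ehrhart term I would use the homogeneity relation $\Delta_\nu(kD) = k\cdot \Delta_\nu(D)$, which is immediate from the description of the Newton-Okounkov body as a slice at level one of the closed cone over $\Gamma_\nu(D)$. This gives $h_{\Delta_\nu(kD)}(m) = h_{\Delta_\nu(D)}(km)$, a quantity depending only on $\Delta_\nu(D)$ as a convex subset of $\mathbb{R}^d$, and hence equal to the analogous expression for $\nu^\prime$ under the hypothesis $\Delta_\nu(D)=\Delta_{\nu^\prime}(D)$.

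Combining these two observations yields $\operatorname{Def}_{\nu,kD} = \operatorname{Def}_{\nu^\prime,kD}$ as functions for every $k$, and the corollary follows. The argument presents no real obstacle; its content is simply that the criterion of Theorem \ref{thmnormaldef} is intrinsic to the pair consisting of the convex body $\Delta_\nu(D)$ and the numerical class of $D$, and is insensitive to the particular valuation-like function producing that body.
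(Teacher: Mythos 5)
Your argument is correct and is essentially the paper's: the paper also deduces the corollary from the equality $\operatorname{Def}_{\nu,kD}=\operatorname{Def}_{\nu^\prime,kD}$ for all $k$, which you simply spell out via the valuation-independence of the Hilbert term and the fact that the Ehrhart term depends only on the convex body $\Delta_{\nu}(D)$.
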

\begin{proof}
Also follows from the equality of defects $\operatorname{Def}_{\nu,kD}=\operatorname{Def}_{\nu^\prime,kD}$ for each $k\in\mathbb{N}$.
\end{proof}

\begin{remark}
The above corollary a posteriori legitimates to say that $\Delta_{\nu}(D)$ induces a normal toric degeneration, instead of $\Gamma_{\nu}(D)$.
\end{remark}
\subsection{Normalized surface area}
Despite the characterization of normal toric degenerations in terms of the normal defect, it is not quite practical, since it involves knowing the Hilbert polynomial of a line bundle, as well as the Ehrhart polynomial. 
In this section we want to omit both problems, but still find a necessary condition to induce normal toric degenerations.

Let us fix an ample divisor $D$ on $X$. Our aim is to find a valuation-like function $\nu$ which induces a normal toric degeneration. The idea is to regard this problem as an optimization problem of the shape of $\Delta_{\nu}(D)$.

For this purpose consider the following definitions.
Let $P$ be a lattice polytope in $\mathbb{Z}^d$. Then denote by $A(P)$  the surface area of $P$ i.e. the sum of the volume of each facet $F$ with respect to the induced sublattice on $F$.

\begin{defi}
Let $X$ be a projective variety of dimension $d$, $D$ a very ample divisor on $X$ and $\nu$ a valuation-like function. Let furthermore $\Delta_{\nu}(D)$ be rational polyhedral. Let $k\in \mathbb{N}$ be an integer such that $k\Delta_{\nu}(D)$ is an integral polyhedron. Then we call 
\begin{align}
S(D,\nu):= \frac{A(\Delta_{\nu}(kD))}{k^{d-1}}
\end{align}
the \emph{normalized surface area of} $\Delta_{\nu}(D)$.
If $\Delta_{\nu}(D)$ is not rational polyhedral, we define $S(D,\nu)=\infty$.
\end{defi}

It is not a priori clear that the above definition is well defined. So let $k,k^\prime$ be two integers such that $k\Delta_{\nu}(D)$ and $k^\prime\Delta_{\nu}(D)$ are integral polyhedra. Consider both Ehrhart polynomials $P_{k\Delta_{\nu}(D)}=\sum_{i=0}^d a_i t^i$ and $P_{k^\prime\Delta_{\nu}(D)}=\sum_{i=0}^d a^\prime_i t^i$.
From our discussion of Ehrhart theory it follows that
\begin{align}
A(\Delta_{\nu}(kD))=2\cdot a_{d-1} \quad A(\Delta_{\nu}(k^\prime D))=2\cdot a^\prime_{d-1}
\end{align}
 Trivially, $P_{k\Delta_{\nu}(D)}(k^\prime)=P_{k^\prime\Delta_{\nu}(D)}(k)$.
Let us consider the Ehrhart polynomial  $P_{k\cdot k^\prime\Delta_{\nu}(D)}=\sum_{i=0}^db_it^i$.

Comparing coefficients, we deduce that $b_{d-1}=a^\prime_{d-1}\cdot k^{d-1}=a_{d-1}\cdot (k^\prime)^{d-1}$.
This proves that \begin{align}
\frac{A(\Delta_{\nu}(kD))}{k^{d-1}}= \frac{A(\Delta_{\nu}(k^\prime D))}{(k^\prime)^{d-1}}.
\end{align}

\begin{thm}\label{thmnormsurfarea}
Suppose that $\Delta_{\nu}(D)$ induces a normal toric degeneration. Then the normalized surface area $S(D,\nu)$ is minimal, i.e. for all valuation-like functions $\nu^\prime$ we have
\begin{align}
S(D,\nu^\prime)\geq S(D,\nu).
\end{align}
\end{thm}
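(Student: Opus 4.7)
The plan is to derive minimality of $S(D,\nu)$ from a polynomial comparison between the Hilbert polynomial $P_D$ and the Ehrhart polynomial of a sufficiently divisible integer multiple of $\Delta_{\nu'}(D)$, applied to an arbitrary valuation-like function $\nu'$. The starting observation is that, using the one-dimensional leaves property, for every $\nu'$ and every $m$ with $m\Delta_{\nu'}(D)$ a lattice polytope one has the pointwise inequality $h_{mD}(n)\leq h_{\Delta_{\nu'}(mD)}(n)$ for all $n\in\mathbb{N}$: the left-hand side counts valuation points of $\Gamma_{\nu'}(mD)$ at level $n$, while the right-hand side counts all integer points of $\overline{\operatorname{Cone}(\Gamma_{\nu'}(mD))}$ at that level, identified with integer points of $n\Delta_{\nu'}(mD)=nm\Delta_{\nu'}(D)$.

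For $n\gg 0$ both functions are polynomial of degree $d$, and their leading coefficients coincide because $\vol(\Delta_{\nu'}(D))=\vol_X(D)/d!$. Hence $Q_{\nu',m}:=P_{\Delta_{\nu'}(mD)}-P_{mD}$ is a polynomial of degree at most $d-1$ taking nonnegative values, so its $(d-1)$-coefficient is nonnegative. By the standard Ehrhart formula recalled in the preliminaries, the $(d-1)$-coefficient of $P_{\Delta_{\nu'}(mD)}$ equals $\tfrac{1}{2}A(\Delta_{\nu'}(mD))=\tfrac{1}{2}m^{d-1}S(D,\nu')$. Since $P_{mD}(n)=P_D(mn)$, the $(d-1)$-coefficient of $P_{mD}$ equals $m^{d-1}c_0$, where $c_0$ is the $(d-1)$-coefficient of $P_D$ and depends only on $D$. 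This yields the universal lower bound $S(D,\nu')\geq 2c_0$ valid for every valuation-like function $\nu'$.

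Now apply Theorem \ref{thmnormaldef} to the hypothesized $\nu$: choose $m$ divisible enough so that $m\Delta_{\nu}(D)$ is integral and simultaneously $\operatorname{Def}_{\nu,mD}\equiv 0$. Then $Q_{\nu,m}$ vanishes identically, so in particular its $(d-1)$-coefficient is zero, forcing $S(D,\nu)=2c_0$. Combined with the previous paragraph this gives $S(D,\nu')\geq 2c_0=S(D,\nu)$ for every $\nu'$, which is the desired minimality.

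The main technical point where I expect to need care is the leading-coefficient comparison: both the identity $\vol(\Delta_{\nu'}(D))=\vol_X(D)/d!$, which makes the degree $d$ parts cancel so that $Q_{\nu',m}$ is genuinely of degree $\leq d-1$, and the Ehrhart identity relating the second leading coefficient to the lattice surface area, must be invoked in an essential way. Once those two inputs are fixed, the argument reduces to matching coefficients in a single polynomial identity.
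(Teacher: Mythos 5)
Your proposal is correct and is essentially the paper's own argument: both proofs cancel the leading terms of the Ehrhart and Hilbert polynomials using $\operatorname{vol}(\Delta_{\nu'}(D))=\operatorname{vol}_X(D)/d!$, use nonnegativity of the normal defect (and its vanishing for $\nu$ via Theorem \ref{thmnormaldef}) to compare the degree-$(d-1)$ coefficients, and identify those coefficients with half the normalized surface areas; your only cosmetic difference is routing both comparisons through the universal bound $2c_0$ given by the second Hilbert coefficient instead of comparing $P_{\Delta_{\nu'}(kD)}$ and $P_{\Delta_{\nu}(kD)}$ directly. The only point you leave implicit, which the paper dispatches in one sentence, is that if $\Delta_{\nu'}(D)$ is not rational polyhedral then $S(D,\nu')=\infty$ and the inequality is trivial.
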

\begin{proof}
Suppose $\Delta_{\nu}(D)$ induces a normal toric degeneration. Let $\nu^\prime$ be another valuation-like function. By Theorem \ref{thmnormaldef}, there is a $k\in\mathbb{N}$ such that the normal defect $\operatorname{Def}(kD,\nu)=0$.
We can assume that $\Delta_{\nu^\prime}(D)$ is rational polyhedral, since otherwise $S(D,\nu)=\infty$.
 Assume furthermore without loss of generality that $\Delta_{\nu}(kD)$ and $\Delta_{\nu^\prime}(kD)$ are integral polyhedra.
Since $\operatorname{Def}(kD,\nu^\prime) \geq 0$ we can follow that  
\begin{align}
\sum_{i=0}^d a_it^i=P_{\Delta_{\nu^\prime}(kD)}\geq P_{\Delta_{\nu}(kD)}=\sum_{i=0}^d b_it^i 
\end{align} 
The first coefficients $a_d$ and $b_d$ of the above polynomials are  both equal to $\operatorname{vol}(\Delta_{\nu}(kD))=\operatorname{vol}(\Delta_{\nu^\prime}(kD))=d!\cdot k^d\operatorname{vol}(D)$.
Thus, we have $a_{d-1}\geq b_{d-1}$, which in turn implies $S(D,\nu^\prime)\geq S(D,\nu)$.

\end{proof}
\section{Normal toric Degenerations on Surfaces}
In this section we want to apply the above discussions to the case where $X$ is a surface.
We will also restrict our attention to valuations coming from flags. 
One reason why the surface case in a lot of situations works particularly well is that we have a Zariski decomposition of divisors.
In our case this  leads to a nice characterization of Newton-Okounkov bodies, which makes things more explicit to handle.

Before we dive into normal toric degenerations, we give an overview of the main facts about Zariski decomposition and Newton-Okounkov bodies on surfaces  in the first two paragraphs. After that we will prove that for surfaces satisfying condition  $(*)$ (see Definition \ref{defstar})  there exists a flag $Y_\bullet$ such that its normalized surface area is minimal with respect to all admissible flags.
If we make some more assumptions on the surface $X$, we will establish an algorithm that computes for a given divisor $D$ a flag $Y_\bullet$ which induces a Newton-Okounkov body with minimal normalized surface area with respect to all valuations coming from flags. Hence, if there exists a flag which induces a normal toric degeneration, this algorithm will indeed find it.

In the following let $X$ always denote a smooth surface.
\subsection{Zariski decomposition}
Let $X$ be a smooth surface. Then the Zariski decomposition of a pseudo-effective $\mathbb{Q}$-divisor is given by $D=P+N$ where $P$ and $N$ are $\mathbb{Q}$-divisors such that
\begin{enumerate}
\item $P$ is nef
\item the support of  $N=\sum_{i=1}^{N} a_i C_i$ consists of negative curves such that $P\cdot C_i=0$ for all $i=1,\dots,N$ and 
\item the intersection matrix $(C_i\cdot C_j)_{i,j=1,\dots,N}$ is negative-definite.
\end{enumerate}
A decomposition with the above prescribed property is unique and we call $P$ the positive and $N$ the negative part of $D$.
One consequence of the above properties is that for $k\in \mathbb{N}$ divisible enough such that $kD$ as well as $kP$ are integral divisors the natural morphism
\begin{align}
H^0(X,\struc_X(kP))\to H^0(X,\struc_X(kD))
\end{align}
is an isomorphism.
That means that, after passing to a multiple, all sections of $kD$ are induced by sections of a nef divisor.
Zariski's original proof relied on the construction of the negative part, which was rather complicated. An easier approach was introduced by Bauer \cite{B09}, whose idea was to construct the positive part of an effective divisor $D$ as the maximal nef subdivisor of $D$. This reduces the problem of finding the Zariski decomposition of a given divisor to solving a linear program. More concretely, if we write  $D=\sum a_i C_i$ as a positive combination of prime divisors, one finds $P=\sum b_iC_i$, where the $b_i$ are chosen such that $\sum b_i$ is maximal under the constraints that $0\leq b_i\leq a_i$, and $\sum b_iC_i$ is nef.  

\begin{remark}
Note that even if $D$ is an integral divisor the Zariski decomposition $D=P+N$ is still a decomposition of $\mathbb{Q}$-divisors, i.e. $P$ and $N$ are not necessarily integral.
\end{remark}

However, in \cite{BPS15} the authors give an upper bound for the size of the denominators occurring in terms of the negativity of $N$. In the proof of Theorem 2.2  they show the following:
\begin{thm}[\cite{BPS15}]\label{thmzarintegral}
Let $X$ be a smooth projective surface with Picard number $\rho(X)$, let  $D$ be a divisor and $N=\sum a_i \cdot C_i$ be its negative part, with $a_i>0$ and $C_i$ prime divisors. Let furthermore $d$ be the denominator of $N$ and $b$ be the maximum of the negative numbers $(C_i)^2$.
Then we have
\begin{align}d\leq b^{\rho(X)-1}.
\end{align}

\end{thm}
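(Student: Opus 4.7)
The plan is to reduce the bound on the denominator of $N$ to a bound on the determinant of the intersection matrix of the components $C_i$, and then to bound this determinant by combining Hadamard's inequality with the Hodge index theorem.

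First I would write $N=\sum_{i=1}^{r}a_iC_i$ and exploit the defining property of the Zariski decomposition: the positive part $P=D-N$ satisfies $P\cdot C_i=0$ for every $i$, which rearranges to the linear system $M\mathbf{a}=\mathbf{v}$, where $M=(C_i\cdot C_j)_{i,j}$ is the intersection matrix, $\mathbf{a}=(a_1,\ldots,a_r)^T$, and $\mathbf{v}=(D\cdot C_1,\ldots,D\cdot C_r)^T$. Both $M$ and $\mathbf{v}$ have integer entries (since $D$ and the $C_i$ are integral divisors), and $M$ is negative-definite by hypothesis, hence invertible. Cramer's rule then expresses each $a_i$ as a ratio of integer determinants with common denominator $\det M$, so the denominator $d$ of $N$ divides $|\det M|$; in particular $d\leq |\det M|$.

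Next I would bound $|\det M|$. The matrix $-M$ is positive-definite with integer entries and diagonal $(|C_i^2|)_i$, so Hadamard's inequality yields
\begin{align}
|\det M| \;=\; \det(-M) \;\leq\; \prod_{i=1}^{r}|C_i^2| \;\leq\; b^{r},
\end{align}
where $b$ is interpreted as $\max_i|C_i^2|$. To upgrade the exponent $r$ to $\rho(X)-1$ I would invoke the Hodge index theorem: since the intersection form on $N^1(X)_{\R}$ has signature $(1,\rho(X)-1)$ and the $C_i$ span a subspace on which this form is negative-definite (they are linearly independent because $M$ is nondegenerate), one has $r\leq \rho(X)-1$. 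Combining the two inequalities gives $d\leq b^{\rho(X)-1}$.

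The only nontrivial ingredient is the Hodge index input, which is what saves one factor of $b$ in the exponent; the rest is linear algebra over $\Z$. I expect this to be the main point to verify carefully — one has to check that the $C_i$ really are $\R$-linearly independent in $N^1(X)_{\R}$, but this is immediate from the negative-definiteness of $M$, as any nontrivial relation among the $C_i$ would produce a nonzero vector in $\ker M$.
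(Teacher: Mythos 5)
Your proof is correct, and it is essentially the argument behind the cited result: the paper itself gives no proof (it simply quotes this from the proof of Theorem 2.2 in [BPS15]), and that proof follows the same skeleton you use — Cramer's rule applied to $M\mathbf{a}=\mathbf{v}$ (coming from $P\cdot C_i=0$) to show $d$ divides $\det(-M)$, a Hadamard-type bound $\det(-M)\leq\prod_i|C_i^2|$, and the Hodge index theorem to bound the number of components by $\rho(X)-1$. The only points worth making explicit are the reading of $b$ as $\max_i|C_i^2|$ (so $b\geq 1$, which is needed to pass from $b^{r}$ to $b^{\rho(X)-1}$), exactly as you do.
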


Another very important feature about the Zariski decomposition, is that it induces a decomposition of the big cone into chambers $\mathcal{C}_i$; the so called \emph{Zariski chambers}. This chamber decomposition was introduced in \cite{BKS04}. We summarize some facts about this decomposition: 
\begin{itemize}
\item The support of the negative parts of $D\in \mathcal{C}_i$ for a fixed $i$ is constant.
\item The $\mathcal{C}_i$ are locally polyhedral and form a locally finite decomposition of the big cone
\item Inside the closure of each Zariski chamber $\mathcal{C}_i$ the Zariski decomposition varies linearly.
\end{itemize} 
\subsection{Newton-Okounkov bodies on surfaces}\label{sectionOkounkov}
Newton-Okounkov bodies are in general difficult to compute. However, on a surface with a valuation-like function coming from a flag, we can give a rather explicit description.
Let $Y_\bullet=(X\subseteq C\subseteq \{P\})$ be an admissible flag, i.e. $P$ is a point and $C$ is an irreducible curve which is smooth at $P$. Then we can define a valuation-like function $\nu_{Y_\bullet}$, by setting for a section $s\in H^0(X,\struc_X(D))$
\begin{align}
\nu_1(s)=\operatorname{ord}_C (s) \quad \nu_2 (s)=\operatorname{ord}_{ \{P\} } (\tilde{s})
\end{align}
where $\tilde{s}$ is the restriction of the section $s/(s_C)^{\nu_1(s)}$ to the curve $C$ and $s_C$ is a defining section of $C$.

In order to describe the Newton-Okounkov body of a big divisor $D$ with respect to a flag $C\supset \{P\}$ we fix the following notation:
\begin{itemize}
\item $\nu:=\operatorname{ord}_{C}(N)$
\item $\mu:=\sup \{t\in\mathbb{R}_{\geq 0} \ | \ D-tC \text{ is effective} \}$
\item For $t\in [0,\mu]$ we define $D_t:=D-tC=P_t+N_t$ where the latter is its Zariski decomposition.
\item We define the functions $\alpha,\beta\colon [\nu,\mu]\to \mathbb{R}_{\geq 0}$ by setting
\begin{align}
\alpha(t):= ord_{P} ({N_t}_{|C}) \quad \beta(t):= \alpha(t)+ (P_t\cdot C).
\end{align}
Moreover, we write $\alpha_D,\beta_D$ if we want to stress that we consider the divisor $D$.
\end{itemize}
Finally, we present the description of Newton-Okounkov bodies in the following theorem, which is based on the discussions in \cite[Section 6.2]{LM09} and \cite[Section 2]{KLM12}. 
\begin{thm}\label{thmokounkovsurface}
The Newton-Okounkov body of a big divisor $D$ with respect to an admissible flag $Y_\bullet$ on a surface $X$ is given by
\begin{align}
\Delta_{Y_\bullet}(D)=\{(t,y)\in \mathbb{R}^2 \ | \  t\in [\nu,\mu], \  y\in [\alpha(t),\beta(t)] \}. 
\end{align}
Moreover, $\Delta_{Y_\bullet}(D)$ is a finite polygon, with all extremal points rational except for possibly $(\mu,\alpha(\mu))$ and $(\mu,\beta(\mu))$.
\end{thm}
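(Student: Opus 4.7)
The plan is to follow the approach of Lazarsfeld--Mustaţă~\cite{LM09} and Küronya--Lozovanu--Maclean~\cite{KLM12} and compute $\Delta_{Y_\bullet}(D)$ one vertical slice at a time. First I would reduce the problem to showing that, for every rational $t_0 \in [\nu,\mu)$, the slice $\{y \in \R : (t_0,y) \in \Delta_{Y_\bullet}(D)\}$ equals the interval $[\alpha(t_0),\beta(t_0)]$; this suffices by convexity of the Newton--Okounkov body together with continuity of $\alpha$ and $\beta$, which will fall out of the Zariski chamber analysis performed at the end.

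Fix such a $t_0$ and choose $k$ divisible enough that $kt_0 \in \Z$ and $kD_{t_0}$ admits an integral Zariski decomposition $kP_{t_0}+kN_{t_0}$. Any section $s \in H^0(X,\struc_X(kD))$ with $\nu_1(s)=kt_0$ produces, after dividing by $s_C^{kt_0}$, a section of $kD_{t_0}$. Since after passing to a multiple every such section is induced from $H^0(X,\struc_X(kP_{t_0}))$ twisted by the fixed divisor $kN_{t_0}$, its restriction to $C$ automatically vanishes at $P$ to order at least $k\alpha(t_0)=k\cdot \operatorname{ord}_P(N_{t_0}|_C)$. This yields the lower bound $\nu_2(s)/k \geq \alpha(t_0)$. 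For the upper bound, one uses that $kP_{t_0}|_C$ is a line bundle of degree $k(P_{t_0}\cdot C)$ on the smooth curve $C$, so its sections vanish at $P$ to order at most $k(P_{t_0}\cdot C)$, forcing $\nu_2(s)/k \leq \beta(t_0)$.

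The main obstacle is the converse inclusion: showing that every integer in $[k\alpha(t_0),k\beta(t_0)]$ is actually realised as $\nu_2$ of some section, so that the slice fills out the entire closed interval rather than a proper subset. Here I would argue that, since $P_{t_0}$ is nef, a Fujita-type vanishing theorem gives surjectivity of the restriction map $H^0(X,\struc_X(mP_{t_0})) \to H^0(C,mP_{t_0}|_C)$ for all $m \gg 0$; combined with the elementary fact that sections of a line bundle of positive degree on a smooth curve realise every vanishing order at a chosen point from $0$ up to the degree, this populates the full interval with valuation points. Convexity of $\Delta_{Y_\bullet}(D)$ (together with taking limits in $t_0$) then delivers the claimed description.

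Finally, for the polygon structure and the rationality of the extremal points, I would appeal to the Zariski chamber decomposition of~\cite{BKS04}: the ray $\{[D-tC] : t \in [\nu,\mu)\}$ crosses only finitely many chambers, and on the closure of each chamber the positive and negative parts $P_t,N_t$ depend linearly on $t$. Consequently $\alpha(t)$ and $\beta(t)$ are continuous piecewise linear functions with rational slopes and rational breakpoints, so every vertex of $\Delta_{Y_\bullet}(D)$ is rational except possibly the two endpoints $(\mu,\alpha(\mu))$ and $(\mu,\beta(\mu))$, where $D-\mu C$ lies on the boundary of the pseudo-effective cone and $\mu$ itself need not be rational.
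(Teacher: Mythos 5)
Your overall strategy --- slicing at rational $t_0$, using the Zariski decomposition of $D_{t_0}$ to get the two bounds $k\alpha(t_0)\le \nu_2(s)\le k\beta(t_0)$, and then invoking the Zariski chamber decomposition of \cite{BKS04} for the polygonality and for the rationality of all vertices except possibly the two over $t=\mu$ --- is exactly the route of \cite[Section 6.2]{LM09} and \cite[Section 2]{KLM12}, which is also all the paper does (it records the statement and refers to those sources, noting only the linear variation inside chambers and the finiteness of chamber crossings). The problem is the step you yourself flag as the main obstacle. Fujita-type vanishing controls cohomology only after twisting by a multiple of an \emph{ample} divisor; it gives nothing for the merely nef and big divisor $P_{t_0}$, and the surjectivity you assert is in fact false in general. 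A standard Zariski-type example: blow up ten very general points of a smooth cubic, let $\tilde C_0$ be the strict transform of the cubic and $P=H+3\tilde C_0=10H-3\sum E_i$. Then $P$ is nef and big, $P|_{\tilde C_0}$ is a non-torsion degree-zero line bundle, so every section of $mP$ vanishes along $\tilde C_0$; taking the admissible flag $Y_1=E_1$ (which meets $\tilde C_0$) and $D=P$, every restricted section vanishes at the point $E_1\cap\tilde C_0$, so $H^0(X,\struc_X(mP))\to H^0(E_1,\struc_{E_1}(mP))$ is not surjective for any $m\ge 1$. In particular your claim that \emph{every} integer of $[k\alpha(t_0),k\beta(t_0)]$ is realised as a valuation cannot be proved this way, and indeed it can fail. (Note that the paper itself assumes $-K_X-Y_1$ big and nef in Theorem \ref{thmnormaldegminusone} precisely in order to obtain this surjectivity from Kawamata--Viehweg; it is not automatic.)

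The statement you actually need is weaker and asymptotic: since $\Delta_{Y_\bullet}(D)$ is a closed convex set, it suffices that the normalized vanishing orders become dense in $[\alpha(t_0),\beta(t_0)]$, i.e.\ that the image of the restriction map has dimension $m(P_{t_0}\cdot C)+o(m)$, equivalently that the restricted volume of $D_{t_0}$ along $C$ equals $(P_{t_0}\cdot C)$ when $C$ is not a component of $N_{t_0}$ (which is guaranteed by \cite[Proposition 2.1]{KLM12}). This is precisely the nontrivial input of \cite[Section 6.2]{LM09}; alternatively one can avoid it by combining the a priori inclusion of each slice in $[\alpha(t),\beta(t)]$ with the identities $\operatorname{vol}(\Delta_{Y_\bullet}(D))=\tfrac12\operatorname{vol}(D)$ and $\int_\nu^\mu (P_t\cdot C)\,dt=\tfrac12\operatorname{vol}(D)$, which force equality of the slice with $[\alpha(t),\beta(t)]$ for all $t$. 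With your vanishing-theorem appeal replaced by one of these arguments the proof is complete; the remaining parts (the two inequalities, finiteness of wall crossings, rational breakpoints away from $t=\mu$, and irrationality possible only at $\mu$) are correct and agree with the proof the paper intends by citation.
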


The proof of the above theorem uses the fact that the Zariski decomposition varies linearly inside the Zariski chambers. The fact that it is a finite polygon follows by showing that the set of divisors $D_t$ for $t\in [\nu,\mu]$ only meets finitely many chambers.
Additionally, it follows from the proof  that the extreme points of $\Delta_{Y_\bullet}(D)$ are all of the following form:
\begin{itemize}
\item $(\nu,\alpha(\nu))$, $(\nu,\beta(\nu))$
\item $(\mu,\alpha(\mu))$, $(\mu,\beta(\mu))$
\item $(t,\alpha(t))$, $(t,\beta(t))$ for $t\in (\nu,\mu)$ such that $D_t$ lies on the boundary of a Zariski chamber.
\end{itemize}

\subsection{Existence of Newton-Okounkov bodies with minimal normalized surface area}
In this paragraph we will prove that for a given divisor $D$ there exists a flag  $Y_\bullet$ such that the normalized surface area of $\Delta_{Y_\bullet}(D)$ is minimal with respect to all admissible flags.

We will now consider surfaces with the following constraints.
\begin{defi}\label{defstar}
We say that a smooth projective surface $X$ satisfies condition $(*)$ if it satisfies the following conditions:
\begin{enumerate}
\item Every pseudo-effective divisor $D$ is semi-effective, i.e. a multiple of $D$ is effective.
\item $X$ contains only finitely many negative curves. 
\end{enumerate}
\end{defi}
\begin{remark}
A large class of examples which satisfy condition $(*)$ are Mori dream surfaces.
\end{remark}

One necessary condition on the curve of the flag to induce a normal toric degeneration is the following.
\begin{lem}
Let $Y_\bullet=(C\supseteq \{P\})$ be an admissible flag such that $\Delta_{Y_\bullet}(D)$ induces a normal toric degeneration. Then the genus of $C$ is zero, i.e. $C\cong \mathbb{P}^1$.
\end{lem}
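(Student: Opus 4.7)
The plan is to identify the ``left edge'' of $\Delta := \Delta_{Y_\bullet}(D)$ (the edge at $t = \nu := \operatorname{ord}_C(N)$) as recording restrictions of sections to $C$, compare the number of integer points on this edge with the number of distinct valuation points it carries, and invoke Riemann--Roch on $C$ to conclude $g(C) = 0$.

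Write $D = P + N$ for the Zariski decomposition. By Theorem~\ref{thmokounkovsurface}, the left edge of $\Delta$ has lattice length $P \cdot C$. Let $m \in \mathbb{N}$ witness normality of the Veronese $m\Gamma_{Y_\bullet}(D)$, and pick $\ell = mk$ with $k \gg 0$ so that $\ell\Delta$ is a lattice polygon and the Serre-type vanishing $H^1(X, \ell D - (\ell\nu+1)C) = 0$ holds. Normality of $m\Gamma_{Y_\bullet}(D)$ forces each of the $\ell(P \cdot C) + 1$ integer points on the left edge of $\ell\Delta$ to be realized as $\nu_{Y_\bullet}(s)$ for some section of $\ell D$. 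On the other hand, the one-dimensional leaves property of $\nu_{Y_\bullet}$ yields
\begin{align}
\#\{\text{valuations with } \nu_1 = \ell\nu\} = h^0(X, \ell D - \ell\nu C) - h^0(X, \ell D - (\ell\nu+1)C),
\end{align}
and the restriction short exact sequence combined with the chosen vanishing identifies this count with $h^0\bigl(C, (\ell D - \ell\nu C)|_C\bigr)$.

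In the main case $C \not\subset \operatorname{supp}(N)$ (so $\nu = 0$) the line bundle $\ell D|_C$ has degree $\ell(D \cdot C) \gg 2g(C) - 2$, and Riemann--Roch on $C$ evaluates $h^0(C, \ell D|_C) = \ell(D \cdot C) + 1 - g(C)$. Equating this with the number $\ell(P \cdot C) + 1 = \ell(D\cdot C - N\cdot C) + 1$ of integer points on the left edge gives
\begin{align}
g(C) = \ell (N \cdot C) \quad \text{for every } \ell = mk, \ k \gg 0.
\end{align}
Since the right hand side is linear in $\ell$ while the left is a fixed constant, this forces $N \cdot C = 0$ and therefore $g(C) = 0$.

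The main obstacle I expect is the degenerate subcase where $C$ is itself a component of $N$: then $P \cdot C = 0$, the left edge of $\Delta$ collapses to a single point, and the Riemann--Roch comparison becomes non-informative whenever $(D - \nu C)\cdot C = 0$ as well. To close this gap one would either analyze a different facet of $\Delta$ on which the valuation still sees a positive-degree restriction to $C$, or exploit the fact that the forced equality $h^0\bigl(C, (\ell D - \ell\nu C)|_C\bigr) = 1$ for all $\ell \in m\mathbb{N}$, together with the finite-order condition it imposes on the relevant degree-$0$ line bundle, is incompatible with $g(C) > 0$.
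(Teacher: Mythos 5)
The step that does not hold up is the ``Serre-type vanishing'' $H^1\bigl(X,\ell D-(\ell\nu+1)C\bigr)=0$, on which your identification of the number of valuation points in the slice $\nu_1=\ell\nu$ with $h^0\bigl(C,(\ell D-\ell\nu C)|_C\bigr)$ rests. The divisor $D-\nu C$ is only big, not nef or ample, so neither Serre nor Kawamata--Viehweg vanishing applies; and for a big divisor whose negative part is nonzero the asymptotic $H^1$ genuinely does not vanish (already $h^1(X,\ell D)\sim-\tfrac{\ell^2}{2}N^2$ when $N\neq 0$, since $h^0(\ell D)=h^0(\ell P)$ while $\chi(\ell D)$ grows with $D^2=P^2+N^2$). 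Worse, the surjectivity of $H^0(X,\ell D-\ell\nu C)\to H^0\bigl(C,(\ell D-\ell\nu C)|_C\bigr)$ that you extract from it \emph{must} fail whenever $N\cdot C>0$: the restricted linear series on $C$ has asymptotic degree $P\cdot C$, not $D\cdot C$ --- this is exactly why the edge of $\Delta$ has length $P\cdot C$. So the equation $g=\ell(N\cdot C)$ is derived from a hypothesis that is false precisely in the case it is meant to handle; read correctly, your computation refutes the vanishing assumption rather than proving $N\cdot C=0$. Without the vanishing you only have the a priori bound $\#\{\text{valuations with }\nu_1=\ell\nu\}\leq h^0\bigl(C,(\ell D-\ell\nu C)|_C\bigr)$, which yields $g\leq\ell(N\cdot C)$ and proves nothing when $N\cdot C>0$. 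In addition, the degenerate case $P\cdot C=0$, where the left edge collapses to a point, is left open, as you acknowledge.

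Both problems disappear with the paper's variant of the same counting idea: choose a rational $t$ with $\{t\}\times\mathbb{R}$ meeting the interior of $\Delta_{Y_\bullet}(D)$ (possible since $D$ is big), so that $P_t\cdot C>0$, and bound the number of valuation points in the slice $\nu_1=kt$ of $\Delta_{Y_\bullet}(kD)$ from above by $h^0\bigl(C,\mathcal{O}_C(kP_t)\bigr)=k(P_t\cdot C)+1-g$ for $k\gg 0$. For this upper bound no surjectivity or cohomology vanishing is needed: every section of $kD_t$ vanishes along $kN_t$, so the image of the restriction to $C$ has dimension at most $h^0(C,kP_t|_C)$, and Riemann--Roch applies because the degree $k(P_t\cdot C)$ tends to infinity. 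Comparing with the $k(P_t\cdot C)+1$ lattice points of that slice, all of which normality forces to be valuative, gives $g=0$ directly; the degree $P_t\cdot C$ appears on both sides, so no auxiliary linearity-in-$\ell$ argument and no separate treatment of $N\cdot C$ is required. If you want to keep your boundary-edge setup, you would have to replace $h^0\bigl(C,(\ell D-\ell\nu C)|_C\bigr)$ by $h^0\bigl(C,\ell P|_C\bigr)$ (using that sections of $\ell(D-\nu C)$ vanish along $\ell N_\nu$) and additionally assume or arrange $P\cdot C>0$; at that point you have reproduced the paper's argument at the slice $t=\nu$.
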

\begin{proof}
Choose a rational $t\in \mathbb{Q}$ such that the slice $\{t\}\times \mathbb{R}^{d-1}$ meets the interior of $\Delta_{Y_\bullet}(D)$.
Let then $k\in \mathbb{N}$ be such that $kD_t=kP_t+kN_t$ is a decomposition of integral divisors and $kt$ is integral. The slice $\Delta_{Y_\bullet}(kD)_{\nu_1=kt}$ contains
$k(P_t\cdot C)+1$ integral points. The valuation points having $kt$ as first coefficient are given by the image of
\begin{align*}
\operatorname{ord}_{P}\colon  H^0(X,\mathcal{O}(kD_t))_{|C}\to \mathbb{Z}
\end{align*}
and the number of valuation points is given by 
\begin{align}
h^0(X,\mathcal{O}(kD_t))_{|C}=h^0(X,\mathcal{O}(kP_t))_{|C}\leq h^0(C,\mathcal{O}_{C}(kP_t)).
\end{align}
However, it follows from Riemann-Roch on curves that for $k\gg 0$ we can compute 
\begin{align}
h^0(C,\mathcal{O}_{C}(kP_t))=k(P_t\cdot C)+1-g
\end{align}

where $g$ is the genus of $C$. But since all integral points of $\Delta_{Y_\bullet}(kD)$ are valuative for $k\gg 0$ it follows that $g=0$ and thus $C\cong \mathbb{P}^1$.
\end{proof}

We continue by proving two helpful lemmata.
\begin{lem}\label{lempoint}
Let $D$ be a big divisor on $X$ and $[C]\in N^{1}(X)$ be the numerical class of an irreducible curve $C$. Then the set of Newton-Okounkov bodies $\Delta_{Y_\bullet}(D)$ where $Y_\bullet$ is a flag such that $[Y_1]=[C]$ is finite.
\end{lem}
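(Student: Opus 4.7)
The plan is to use the explicit description from Theorem~\ref{thmokounkovsurface} and show that the data determining $\Delta_{Y_\bullet}(D)$ depend on the flag $Y_\bullet=(Y_1\supset\{P\})$ only through a finite amount of combinatorial information. Recall the body is $\{(t,y):t\in[\nu,\mu],\ \alpha(t)\le y\le\beta(t)\}$, and its extreme points occur at $t=\nu$, $t=\mu$, or at values of $t$ where the ray $\{[D]-s[C]\}$ crosses a Zariski chamber wall. So it is enough to show that each of $\nu$, $\mu$, $\alpha$, and $\beta-\alpha$ takes only finitely many forms as $Y_1$ varies in $[C]$ and $P$ varies on $Y_1$.

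First I would observe that $\mu$, $\nu$ and $\beta(t)-\alpha(t)$ depend only on $[D]$ and $[C]$. The value $\mu$ coincides with the pseudo-effective threshold of $D-tY_1$, which is numerical. For $\nu=\operatorname{ord}_{Y_1}(N_D)$: if $[C]^2\ge 0$ then $Y_1$ cannot sit in $N_D$ (whose support consists of negative curves), so $\nu=0$; and if $[C]^2<0$, then any irreducible $C'$ with $[C']=[C]$ and $C'\neq C$ would force $C\cdot C'\ge 0$ while simultaneously $C\cdot C'=C^2<0$, a contradiction, so $Y_1=C$ is forced and $\nu$ is determined. Finally $\beta(t)-\alpha(t)=(P_t\cdot Y_1)$ depends only on $[D]$, $[C]$, and $t$, since the class $[P_t]$ is determined by $[D_t]=[D]-t[C]$.

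It remains to control $\alpha(t)$. The segment $\{[D]-t[C]:t\in[\nu,\mu]\}$ lies in the big cone and, by local finiteness of the Zariski chamber decomposition, meets only finitely many chambers. Hence only finitely many negative curves $C_1,\dots,C_k$ can occur in the supports of $N_t$ for $t\in[\nu,\mu]$. Within each chamber the coefficients in $N_t=\sum_i a_i(t)C_i$ vary linearly in $t$ and are determined entirely by $[D]$ and $[C]$. Consequently
\begin{equation}
\alpha(t)=\sum_i a_i(t)\cdot m_i,\quad m_i:=(C_i\cdot Y_1)_P,
\end{equation}
is completely determined once the local intersection multiplicities $m_i$ at the flag point are specified.

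Since each $m_i$ is a non-negative integer bounded above by the global intersection number $(C_i\cdot Y_1)=(C_i\cdot C)$, the tuple $(m_1,\dots,m_k)$ ranges over a finite set. Combined with the first step, the piecewise linear function $\alpha$, and hence the full polygon $\Delta_{Y_\bullet}(D)$, admits only finitely many possibilities. The main subtlety I expect is the degenerate case where $Y_1$ coincides with one of the $C_i$ (which can happen only when $[C]^2<0$ and therefore $Y_1=C$ is uniquely pinned down): there the contribution of $Y_1$ itself must be absorbed into $\nu$ rather than into a local multiplicity at $P$, so the bookkeeping of $\alpha$ and $\nu$ has to be adjusted. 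This still produces only finitely many cases and does not change the conclusion.
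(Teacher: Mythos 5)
Your proposal is correct and follows essentially the same route as the paper: reduce, via the surface description of Theorem \ref{thmokounkovsurface} and the finiteness of Zariski-chamber wall crossings along $D-tC$, to showing that the piecewise linear function $\alpha$ (and hence $\beta=\alpha+(P_t\cdot C)$) admits only finitely many possibilities as the flag varies in the class $[C]$. The only difference is bookkeeping: the paper pins $\alpha$ down by its values at the finitely many breakpoints, which after replacing $D$ by $kD$ are integers bounded by $(N_\mu\cdot C)$ using the monotonicity statement from \cite{KLM12}, whereas you pin it down by the tuple of local intersection multiplicities $(C_i\cdot Y_1)_P\leq (C_i\cdot C)$ entering $\alpha(t)=\sum_i a_i(t)(C_i\cdot Y_1)_P$ — both give the same finiteness, and your degenerate case $Y_1=C_i$ is in fact excluded on $[\nu,\mu]$ by \cite[Proposition 2.1]{KLM12}, as the paper notes.
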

\begin{proof}
Consider the negative part $N_\mu$ of the pseudo-effective divisor $D_\mu=D-\mu C$. Let $C_1,\dots,C_l$ be the irreducible curves in the support of $N_\mu$. It follows from \cite[Proposition 2.1]{KLM12} that the irreducible components of $N_t$ for $t\in [\nu,\mu]$ is a subset of  $\{C_1,\dots,C_l\}$, and that $C$ is not equal to $C_i$ for all $i=1,\dots l$.
Let $\nu\leq t_1\leq \dots \leq t_{r}\leq \mu$ be all rational numbers in $[\nu,\mu]$ such that $D_{t_i}$ lies on the boundary of some Zariski chamber. By the discussion in section \ref{sectionOkounkov}, these are indeed finitely many.
Consider the negative parts $N_\nu,N_{t_1},\dots, N_{t_r}, N_{\mu}$. By replacing $D$ with $kD$ for $k\gg 0$, we may assume without loss of generality that all these negative parts are integral divisors and the numbers $t_1,\dots,t_r$ are integral.
For each $P\in C$ we have
\begin{align} 
 \alpha(\mu)=\operatorname{ord}_P N_{\mu|C}\leq \sum_{x\in C\cap N_{\mu}} \operatorname{ord}_x(N_{\mu|C})=(N_\mu\cdot C).
\end{align} 
By \cite[Theorem B]{KLM12}, the function $\alpha$ is increasing, and piecewise linear with possible breaking points at $t_1,\dots,t_r$. This shows that for a fixed class $[C]$ the function $\alpha$ is bounded by some constant independent from the point $P$. By construction, $\alpha$   takes integer values at the points $\nu,t_1,\dots,t_r,\mu$. Varying the point $P$, there are only finitely many possibilities for $\alpha$ since it is uniquely defined by its values on $\nu,t_1,\dots,t_r$ and $\mu$. But, by definition, the same holds for $\beta$. However, we have seen in Theorem \ref{thmokounkovsurface} that  $\Delta_{Y_\bullet}(D)$ is determined by $\alpha$ and $\beta$. This shows the claim.
\end{proof}
\begin{lem}\label{lemnefcurve}
Let $X$ be a smooth surface that satisfies condition $(*)$.
Let $D$ be a big and nef divisor on $X$. Then the set
\begin{align}
H_k:=\{[D^\prime]\in N^1(X)_{\mathbb{R}} \ : \ D^\prime \text{ is nef and } (D^\prime\cdot D)= k \}
\end{align}
is compact for all $k\in \mathbb{N}$.
\end{lem}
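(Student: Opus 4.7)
The set $H_k$ is closed in $N^{1}(X)_{\mathbb{R}}$ as the intersection of the closed nef cone with the affine hyperplane $\{D^\prime : D^\prime \cdot D = k\}$, so the plan is to show that $H_k$ is bounded.

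Since $D$ is big and nef on a surface, the equality $\vol(D)=D^{2}$ implies $D^{2}>0$. The first step would be to prove that the linear functional $\phi\colon N^{1}(X)_{\mathbb{R}}\to \mathbb{R}$ defined by $\phi(D^\prime):=D\cdot D^\prime$ is strictly positive on $\text{Nef}(X)\setminus\{0\}$. For this I would invoke the Hodge index theorem: if $D^\prime$ is nef with $D\cdot D^\prime=0$, then, since $D^{2}>0$, Hodge index forces $(D^\prime)^{2}\leq 0$; combined with $(D^\prime)^{2}\geq 0$ (which holds because $D^\prime$ is nef on a surface), this gives $(D^\prime)^{2}=0$, and a second application of Hodge index yields $D^\prime\equiv 0$, contradicting $D^\prime\neq 0$.

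Once positivity has been established, I would fix any norm $\Vert\cdot\Vert$ on $N^{1}(X)_{\mathbb{R}}$ and let $S$ denote the unit sphere. Then $S\cap \text{Nef}(X)$ is compact, and $\phi$ restricts to a continuous strictly positive function on it, hence attains a positive minimum $c>0$. By homogeneity, $\phi(D^\prime)\geq c\Vert D^\prime\Vert$ for every $D^\prime\in\text{Nef}(X)$, and on $H_k$ this gives $\Vert D^\prime\Vert\leq k/c$, proving boundedness.

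The only substantive point is the Hodge index input establishing strict positivity of $\phi$ on the nef cone minus the origin; the rest is a routine compactness-on-the-sphere argument. I note that condition $(*)$ is not strictly necessary for the compactness statement, although under $(*)$ one could alternatively exploit the polyhedrality of $\text{Nef}(X)$ and verify positivity of $\phi$ only on the finitely many extremal rays.
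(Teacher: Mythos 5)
Your proof is correct, but it is structured differently from the paper's. The paper argues by contradiction: since $H_k$ is closed and convex, unboundedness would give a half line $A'+\lambda F$ inside $H_k$ emanating from a scaled ample class $A'$; it shows the direction $F$ is nef, invokes condition $(*)$ (nef $\Rightarrow$ semi-effective) to conclude $A'\cdot F>0$, makes $(A'+\lambda F)^2$ arbitrarily large, and then contradicts $(D\cdot(A'+\lambda F))=k$ via the Hodge index inequality $(D\cdot N)\geq \sqrt{D^2\cdot N^2}$ for nef classes. You instead prove directly that the linear functional $D'\mapsto (D\cdot D')$ is strictly positive on $\operatorname{Nef}(X)\setminus\{0\}$ (using that the intersection form is negative definite on $D^{\perp}$ when $D^2>0$, together with $(D')^2\geq 0$ for nef $D'$), and then get the bound $\Vert D'\Vert\leq k/c$ by minimizing over the compact set $S\cap\operatorname{Nef}(X)$ and using homogeneity. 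Both proofs rest on the Hodge index theorem, but your route dispenses with the ray-in-an-unbounded-convex-set step and, as you correctly note, with condition $(*)$ altogether: for $A'$ ample and $F$ nef nonzero, $A'\cdot F>0$ already follows from the index theorem, so the compactness of $H_k$ holds on any smooth projective surface. This makes your argument slightly more general and more quantitative, while the paper's version fits the hypotheses it has already imposed and reuses the semi-effectiveness provided by $(*)$.
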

\begin{proof}
Suppose that $H_k$ is not compact. It is easy to check that $H_k$ is closed. This means $H_k$ is not bounded. But since it is also convex, there exists for every point in $H_k$  a half line which is completely contained in $H_k$.
For this purpose fix any ample class $[A]\in N^1(X)$ and consider the $\mathbb{Q}$-divisor $A^\prime=\frac{k}{(D\cdot A)}A$ which lies in $H_k$. Since $H_k$ is not bounded there is a divisor class $[F]\in N^1(X)_\mathbb{R}$ such that for all $\lambda>0$, the class $[A^\prime+\lambda F]$ lies in $H_k$.
We claim that $F$ is nef. Indeed, suppose that $F$ is not nef. Then for $\lambda\gg 0$ the divisor $A^\prime+\lambda F$ is not nef as well and thus does not lie in $H_k$. 

For a given $\lambda>0$, we have 
\begin{align}
(A^\prime+\lambda F)^2=(A^\prime)^2+2\lambda(A^\prime\cdot F) +(F)^2\geq (A^\prime)^2 +2\lambda (A^\prime\cdot F).
\end{align} 
But $A^\prime$ is ample and $F$ nef, hence semi-effective by condition $(*)$. This implies that $A^\prime\cdot F >0$ and  enables us to find a $\lambda>0$ such that 
\begin{align}
\sqrt{(A^\prime+\lambda F)^2}> k/\sqrt{(D)^2}.
\end{align}

 Here, we used the fact that $D$ is big and nef and thus $(D^2)>0$.
As $D$ and $A+\lambda F$ are both nef, we can use the Hodge index theorem to deduce
\begin{align}
(D\cdot (A^\prime+\lambda F))\geq \sqrt{(D)^2 \cdot (A^\prime+\lambda F)^2}>k.
\end{align}
This shows that $A^\prime+\lambda F$ does not lie in $H_k$, which is a contradiction.
Hence, $H_k$ is compact.

\end{proof}

\begin{thm}\label{thmexminsurf} Let $X$ be a smooth surface satisfying condition $(*)$.
Let $D$ be a big divisor on $X$. Then there exists an admissible flag $Y_\bullet=(C\supset \{x\} )$ such that its normalized surface area $S(D,\nu_{Y_\bullet})$ is minimal, i.e. for any admissible flag $Y_\bullet^\prime$ we have $S(D,\nu_{Y_\bullet})\leq S(D,\nu_{Y_\bullet^\prime})$.
\end{thm}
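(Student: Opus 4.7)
The plan is to reduce the minimization to a finite search. Pick any admissible flag $Y_\bullet^0$ and set $M := S(D, \nu_{Y_\bullet^0}) < \infty$; it suffices to show that only finitely many values $S(D, \nu_{Y_\bullet}) \leq M$ arise, since the minimum is then attained. Write $D = P_D + N_D$ for the Zariski decomposition, so $P_D$ is big and nef (on a surface, $D^2 > 0$ together with $P_D \cdot N_D = 0$ and $N_D^2 \leq 0$ gives $P_D^2 \geq D^2 > 0$). Split the admissible flags $Y_\bullet = (C \supset \{x\})$ according to whether $C$ is a negative curve or has $C^2 \geq 0$. Condition $(*)$ makes the first case contribute only finitely many classes $[C]$; in the second case the irreducibility of $C$ combined with $C^2 \geq 0$ ensures $C$ is nef.

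For nef $C$ the support of $N_D$ consists of negative curves, so $\nu = \operatorname{ord}_C(N_D) = 0$, and Theorem \ref{thmokounkovsurface} shows that the left edge of $\Delta_{Y_\bullet}(D)$ at $t=0$ is the vertical segment from $(0,\alpha(0))$ to $(0,\beta(0))$, of length $\beta(0) - \alpha(0) = P_D \cdot C$. Choosing $k$ divisible enough so that $k\Delta_{Y_\bullet}(D)$ is a lattice polygon, this edge contributes lattice length $k(P_D \cdot C)$ to $A(\Delta_{Y_\bullet}(kD))$, yielding
\begin{align}
S(D, \nu_{Y_\bullet}) \;\geq\; P_D \cdot C.
\end{align}
In particular $S(D, \nu_{Y_\bullet}) \leq M$ forces $P_D \cdot C \leq M$. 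I then claim that $K_M := \{[E] \in N^1(X)_\R : E \text{ nef}, \, P_D \cdot E \leq M\}$ is compact: it is clearly closed, and if it were unbounded one could extract (by Bolzano--Weierstrass in the finite-dimensional $N^1(X)_\R$) a limit $E$ of rescaled classes with $\|E\| = 1$, $E$ nef, and $P_D \cdot E = 0$; Hodge Index applied to $P_D^2 > 0$ then forces $E^2 \leq 0$, which together with $E^2 \geq 0$ from nefness gives $E \equiv 0$, a contradiction. Hence $K_M \cap N^1(X)_\Z$ is finite.

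Combining the two cases, only finitely many classes $[Y_1]$ need to be examined, and for each one Lemma \ref{lempoint} produces only finitely many Newton-Okounkov bodies $\Delta_{Y_\bullet}(D)$, hence finitely many values $S(D, \nu_{Y_\bullet})$. The subset of those values which are $\leq M$ is therefore finite and its minimum is realized. I expect the main technical point to be the compactness of $K_M$: although Lemma \ref{lemnefcurve} handles the analogous set for an integral big and nef divisor, here $P_D$ is only a $\Q$-divisor, so one has to rerun the Hodge Index / rescaling argument directly rather than invoke that lemma as a black box.
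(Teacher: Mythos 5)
Your proof follows essentially the same route as the paper: split flags according to whether $Y_1$ is negative (finitely many classes by condition $(*)$) or nef, bound the relevant intersection number by the normalized surface area using the vertical edge of the Newton-Okounkov body at $t=0$, deduce from a compactness statement that only finitely many integral nef classes need to be tested, and finish with Lemma \ref{lempoint}. The only real difference is that the paper first replaces $D$ by its positive part, so that Lemma \ref{lemnefcurve} applies verbatim to a big and nef divisor, whereas you keep $D$ and rerun the Hodge-index rescaling argument directly for the $\mathbb{Q}$-divisor $P_D$; this works, and your version of the compactness argument in fact does not even use condition $(*)$. One small slip worth fixing: your justification that $P_D$ is big and nef assumes $D^2>0$, which can fail for a big divisor on a surface (the negative part can make $D^2$ negative); the correct reason is that $P_D^2=\operatorname{vol}(P_D)=\operatorname{vol}(D)>0$, so $P_D$ is nef with positive self-intersection, hence big.
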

\begin{proof}By scaling and considering the positive part in the Zariski decomposition of $D$, we can without loss of generality assume that $D$ is big and nef.
The idea of the proof is to show that only a finite number of classes of curves $C$ have to be tested. Together with Lemma \ref{lempoint} we can then prove the claim. 

First of all if $C$ is an irreducible curve, then its class $[C]$ is either nef or it is a negative curve depending on whether $C^2\geq 0$ or $C^2<0$. 
Since $X$ satisfies condition $(*)$, we have to test only finitely many negative curves.  
Hence, we can restrict our attention to the case that $C$ is nef.
 Let $Y_\bullet$ be any  admissible flag such that $Y_1$ is nef, and set $M:=S(D,\nu_{Y_\bullet})$. Then for all $C^\prime$, and any point $P^\prime \in C^\prime$ such that $(D\cdot C^\prime)>M$, 
 we know that there are already more than $M+1$ integral points on the boundary of $\Delta_{C^\prime\supset\{P^\prime\}}(D)$, namely $(0,\alpha(0)),(0,\alpha(0)+1),\dots,(0,\beta(0))$. However, this implies $S(D,\nu_{Y_\bullet})>M$.
Hence, we have limited the candidates to nef irreducible curves $C^\prime$ such that $(D\cdot C^\prime)\leq M$. But Lemma \ref{lemnefcurve} implies there are only finitely many integral nef classes of curves which satisfy this condition. Moreover, each class of a curve $[C]$ we have to test, has finitely many different Newton-Okounkov bodies, when varying the point $P$ by Lemma \ref{lempoint}. This proves the claim.
\end{proof}

\subsection{Algorithm for finding a flag with minimal normalized surface area}

In this paragraph we want to introduce and discuss an algorithm, which outputs for a given big divisor $D$ on a surface $X$ a flag $Y_\bullet$ such that $\Delta_{Y_\bullet}(D)$ induces a normal toric degeneration if  such a flag exists.
In Theorem \ref{thmexminsurf}, we have limited the possible candidates for flags which induce  normal toric degenerations to finitely many classes of curves $[Y_1]$. However, it is a rather difficult task to describe what possible points $P\in C$ can occur and how the function $\alpha$  from Section \ref{sectionOkounkov} varies. The idea of this section is to show that it is possible to reduce to a general point on the chosen curve.
Then $\alpha=0$ and $\beta(t)=(P_t\cdot C)$. It follows that the corresponding Newton-Okounkov body only depends on the numerical class of the curve $C$ and in this situation Theorem \ref{thmexminsurf} gives rise to a rather explicit algorithmic way of finding a class of a curve which is the best candidate for defining a flag $Y_\bullet$ such that $\Delta_{Y_\bullet}(D)$ induces a normal toric degeneration.

The price we pay for being able to choose a general point is the following constraint.
\begin{defi}
We say that a smooth projective surface $X$ satisfies condition $(**)$ if it satisfies condition $(*)$ and the Zariski decomposition is  a decomposition of integral divisors, i.e. for each integral divisor $D$ its positive part $P(D)$ as well as its negative part $N(D)$ is integral.
\end{defi}

\begin{remark}
It follows from Theorem \ref{thmzarintegral} that a surface  having only negative curves with self intersection $-1$ induces integral Zariski decompositions for all divisors. An example for this situation would be smooth del Pezzo surfaces (more details follow in the next section).
\end{remark}

The following lemma is the key for reducing to the case of a general point $P$ on $C$.
\begin{lem}\label{lemnormpoint}
Let $X$ be a smooth surfaces satisfying condition $(**)$. Suppose $D$ is a big divisor and $Y_\bullet=(C\supset \{P\})$ is an admissible flag such that $\Delta_{Y_\bullet}(D)$ induces a normal toric degeneration. Then for each point $P^\prime\in C$ consider the flag $Y_\bullet^\prime=(C\supset \{P^\prime\})$.
Then $\Delta_{Y^\prime_\bullet}(D)$ induces a normal toric degeneration as well. 
\end{lem}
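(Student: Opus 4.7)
The plan is to invoke Theorem \ref{thmnormaldef}, which characterizes the normal toric degeneration property of $\Delta_{Y_\bullet}(D)$ as the equality
\begin{align}
|k\Delta_{Y_\bullet}(mD) \cap \mathbb{Z}^2| = h^0(X, kmD)
\end{align}
for $m$ divisible enough and all $k \gg 0$. Since the inequality $|k\Delta_{Y_\bullet^\prime}(mD) \cap \mathbb{Z}^2| \geq h^0(X, kmD)$ holds for any admissible flag (valuation points inject into lattice points of the cone), it suffices to show that the lattice point count $|k\Delta_{(C,P)}(mD) \cap \mathbb{Z}^2|$ does not depend on the choice of the point of the flag on $C$, once $m$ is divisible enough.

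To compute this count, I will use the explicit description from Theorem \ref{thmokounkovsurface}: the body $\Delta_{(C,P)}(kmD) = k\Delta_{(C,P)}(mD)$ is bounded by the piecewise linear functions $\alpha(s) = \operatorname{ord}_P(N(kmD - sC)|_C)$ and $\beta(s) = \alpha(s) + (P(kmD - sC) \cdot C)$ for $s \in [km\nu, km\mu]$, where $P(kmD - sC)$ denotes the positive part of the Zariski decomposition. Both the width $\beta(s) - \alpha(s) = (P(kmD - sC) \cdot C)$ and the parameter range $[km\nu, km\mu]$ depend only on the divisor $D$ and the curve $C$, not on the chosen point. The only way that $P$-dependence can enter the lattice point count is via the actual values of $\alpha(s)$ modulo $1$.

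This is where condition $(**)$ is decisive. Choose $m$ divisible enough so that $m\nu, m\mu \in \mathbb{Z}$. Then for each integer $s \in [km\nu, km\mu]$, the divisor $kmD - sC$ is integral, and condition $(**)$ forces both its positive and negative parts to be integral divisors. Consequently $\alpha(s)$, being the multiplicity at $P$ of an integral effective divisor on the smooth curve $C$, is a non-negative integer, and so is $\beta(s)$. The number of lattice points in the vertical slice $\{s\} \times [\alpha(s), \beta(s)]$ is therefore $(P(kmD - sC) \cdot C) + 1$, independent of $P$. Summing over the $P$-independent set $s \in \mathbb{Z} \cap [km\nu, km\mu]$ proves that the total count is $P$-independent; together with the hypothesis and Theorem \ref{thmnormaldef} applied to $(C, P^\prime)$, this yields the claim. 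The main obstacle is precisely the integrality step: without condition $(**)$, $\alpha(s)$ could be non-integer rational and the slice-wise count would then involve $\lfloor \beta(s) \rfloor - \lceil \alpha(s) \rceil + 1$, which depends on the fractional part of $\alpha$ and hence on $P$.
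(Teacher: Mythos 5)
Your proposal is correct and follows essentially the same route as the paper's proof: both reduce to showing the lattice point count of $k\Delta_{(C,P)}(D)$ is independent of $P$, using the surface description via $\alpha,\beta$ and the integrality of Zariski decompositions from condition $(**)$ to get the slice-wise count $(P_{k,t}\cdot C)+1$, and then conclude via Theorem \ref{thmnormaldef}. Your explicit remark that the defect is always nonnegative, so equal counts transfer normality to the flag $(C,P')$, is exactly the implicit final step in the paper.
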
 
\begin{proof} 
We will do this by proving that the Ehrhart polynomial $P_{\Delta_{Y_\bullet}(kD)}$ is independent of the point $P$ for $k\gg 0$.
Let $k\gg 0$ such that $\Delta_{Y_\bullet}(kD)$ is an integral polytope.
Define for integral $m,t$ the divisor $D_{m,t}:=mD-tC=:P_{m,t}+N_{m,t}$.

Since $X$ satisfies condition $(**)$, the function $\alpha_{kD}(t)=\text{ord}_{P}(N_{k,t|C})$ and $\beta_{kD}(t)=\alpha_{kD}(t)+(P_{k,t}\cdot C)$ admit integral values for each integral $t$. From this we can deduce
\begin{align}
\vert k\Delta_{Y_\bullet}(D)\cap \mathbb{Z}^2  \vert=\sum_{t=k
\nu}^{k\mu}\left((P_{k,t}\cdot C)+1\right).
\end{align}
But the right hand side does not depend on the choice of the point $P\in C$. Hence, the result follows from Theorem \ref{thmnormaldef}.
\end{proof}
\begin{remark}
The condition that $X$ admits integral Zariski decompositions is indeed necessary for the above lemma.

\begin{figure}[H]
\includegraphics[scale=0.2]{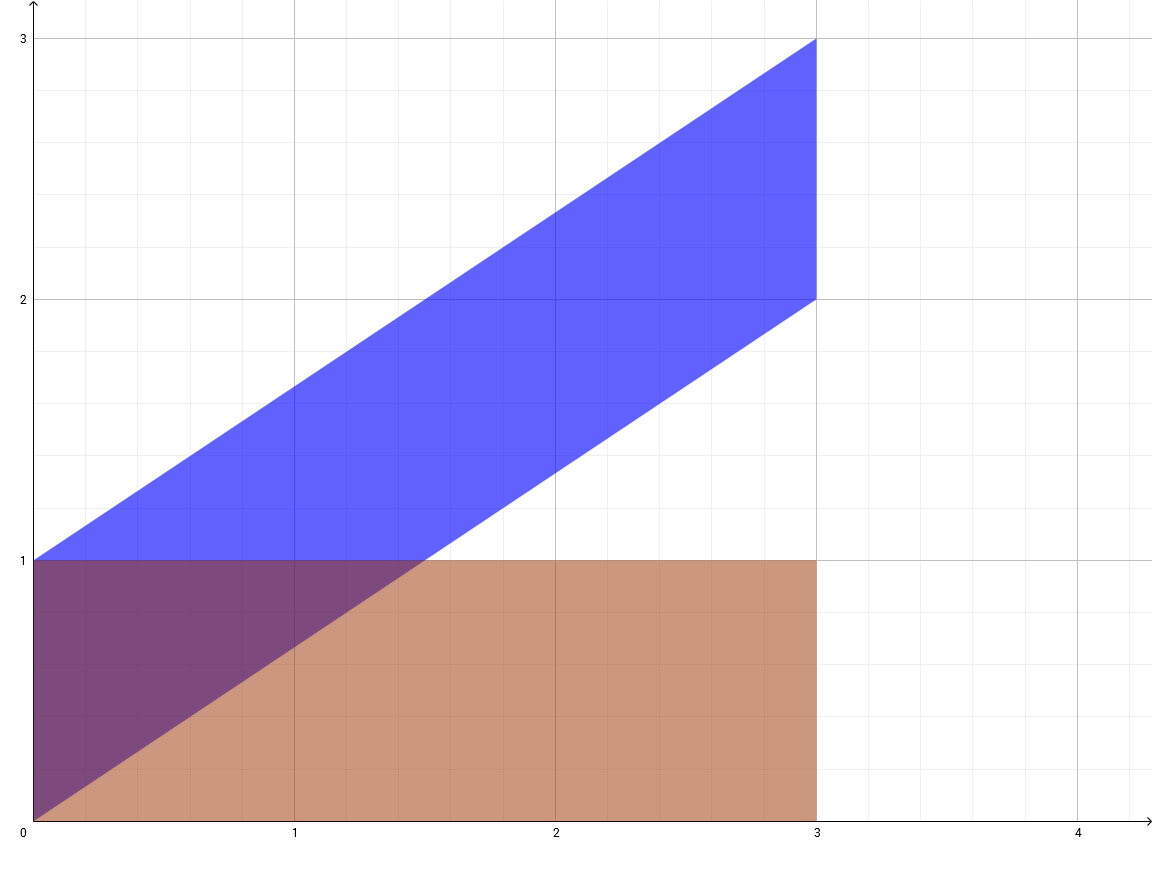}

\caption{Newton-Okounkov body of toric variety}
\label{newtontoric}

\end{figure} 
For a counterexample consider the blue (dark) polytope in Figure \ref{newtontoric}. Then, by the discussion in \cite[Section 6.1]{LM09}, there is a toric variety $X$ and  a divisor $D$ such that with respect to a certain flag $Y_\bullet$, the corresponding Newton-Okounkov body $\Delta_{Y_\bullet}(D)$ is equal to the above blue (dark) polytope. It also follows from this discussion that $\Delta_{Y_\bullet}(D)$ induces a normal toric degeneration.
If we change the point $Y_2$ of the flag and pick a general one instead, the resulting Newton-Okounkov body equals the red (light) polytope in Figure \ref{newtontoric}.
However, the number of integral boundary points on the red polytope (8) is bigger than the boundary points on the blue polytope (4).
This proves that for a general point, the corresponding Newton-Okounkov body does not induce a normal toric degeneration even though this holds for a special point.

\end{remark}

As we have seen above, the Ehrhart polynomial of $\Delta_{Y_\bullet}(D)$ only depends on the numerical class $[Y_1]$ if $X$ satisfies condition $(**)$. Hence, also the normalized surface area $S(D,\nu_{Y_\bullet})$ only depends on the numerical class $[Y_1]$. Therefore, we will just write $S(D,[Y_1])$ 
instead of $S(D,\nu_{Y_\bullet})$.

We are now able to describe an algorithm which will give us for a given divisor $D$ an optimal class of a curve $[C]$.

\begin{algorithm} 

\label{algo}
\DontPrintSemicolon
 \KwIn{a big divisor D}
 \KwResult{optimal class of a curve C}
  D=P+N \quad\tcp{compute Zariski decomposition}\;
  D:=P \quad  \tcp{replace the divisor D by its positive part}\; 
 \For{ negative classes of curves $N_i$}{
   compute $S(D,[N_i])$
   } 
   $optimum:= \min_{N_i} S(D,[N_i])$\;
   $optimalcurve:= \operatorname{argmin}_{N_i} S(D,[N_i])$\;
 \For{ all $\xi \in N^1(X)$ s.t. $(D\cdot \xi)<optimum+1$}{
  \If{$\xi==[C]$ for some irreducible curve $C$}{
  \If{$optimum\leq S(D,\xi)$}{
  $optimum=S(D,\xi)$\;
  $optimalcurve:=\xi$\;
  }
  \uElseIf{ $optimum== S(D,\xi)$}
  {$optimalcurve.append(\xi)$\;}  
  }
 
 }
 
\KwOut{optimalcurve}
 \caption{Algorithm for finding normal toric degenerations}
\end{algorithm}

\begin{thm}
Let $X$ be a projective surface satisfying condition $(**)$. Let $D$ be a very ample divisor. Let us assume that there is a flag $Y_\bullet$ such that $\Delta_{Y_\bullet}(D)$ induces a normal toric degeneration. Then the output of Algorithm 1 gives a list of all classes of curves $C_i$ which give rise to  flags such that the corresponding Newton-Okounkov bodies induce  normal toric degenerations.   
\end{thm}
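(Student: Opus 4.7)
The plan is to combine Theorem \ref{thmnormsurfarea} with the class-invariance guaranteed by Lemma \ref{lemnormpoint}, thereby reducing the question to a finite enumeration of numerical classes of irreducible curves, and then to verify that the two loops of Algorithm \ref{algo} together visit every class attaining the minimum of $S(D,-)$.

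Since $D$ is very ample it is already big and nef, so under condition $(**)$ its Zariski decomposition is trivial and the first two lines of the algorithm return $D$ unchanged. By Lemma \ref{lemnormpoint} the Ehrhart polynomial of $\Delta_{Y_\bullet}(kD)$ for $k\gg 0$, and hence the normalized surface area $S(D,\nu_{Y_\bullet})$, depends only on the numerical class $[Y_1]$; I write $S(D,[Y_1])$ for this common value. Theorem \ref{thmnormsurfarea} then asserts that any class $[C]$ arising from a flag $Y_\bullet=(C\supset\{P\})$ such that $\Delta_{Y_\bullet}(D)$ induces a normal toric degeneration must attain $\min_{[C']}S(D,[C'])$, where $[C']$ ranges over classes of irreducible curves. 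Hence it suffices to show that Algorithm \ref{algo} outputs precisely this minimum set.

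The second step would be to prove the inequality $S(D,[C])\geq (D\cdot C)$ for every class $[C]$ of an irreducible curve. For a general $P\in C$ the functions from Section \ref{sectionOkounkov} satisfy $\alpha(0)=0$ (using that $D$ is nef and so $N_0=0$) and $\beta(0)=(D\cdot C)$, so the facet of $\Delta_{Y_\bullet}(D)$ at $t=0$ is the segment $\{0\}\times [0,(D\cdot C)]$ of lattice length $(D\cdot C)$, which contributes $(D\cdot C)$ to the surface area; the scaling in the definition of $S$ then yields the stated lower bound. Combined with the fact that the first loop produces an initial upper bound $M_0=\min_{N_i}S(D,[N_i])$ on the true minimum from the finitely many negative curves (finite by condition $(*)$), I conclude that any class $\xi$ with $S(D,\xi)\leq M_0$ automatically satisfies $(D\cdot\xi)\leq M_0<M_0+1$. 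Thus the search range $(D\cdot\xi)<\text{optimum}+1$ of the second loop is certain to contain every optimal class, regardless of whether \emph{optimum} is updated dynamically during iteration.

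The main obstacle is showing that this second loop terminates; this is where Lemma \ref{lemnefcurve} becomes essential. Every irreducible curve class is either a negative curve class (finitely many by condition $(*)$) or a nef class, and for nef classes Lemma \ref{lemnefcurve} ensures that the set $\{\xi\in N^1(X)_\mathbb{R} : \xi \text{ nef},\ (D\cdot\xi)=k\}$ is compact for each $k$ and therefore contains only finitely many integral points; summing over $k=0,1,\dots,M_0$ yields a finite list of candidates. Termination combined with the completeness established in the previous paragraph then shows that the output is exactly the list of classes of irreducible curves attaining $\min_{[C]}S(D,[C])$, which by Theorem \ref{thmnormsurfarea} contains every class whose associated flags induce normal toric degenerations.
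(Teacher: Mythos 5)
Your argument follows the paper's own route --- Theorem \ref{thmnormsurfarea}, the finiteness mechanism from the proof of Theorem \ref{thmexminsurf} (the lower bound $S(D,[C])\geq (D\cdot C)$ read off from the $t=0$ slice, condition $(*)$ for the negative classes, and Lemma \ref{lemnefcurve} for the nef ones), and Lemma \ref{lemnormpoint} to make $S$ depend only on the class $[Y_1]$ --- and that part is carried out correctly, including the useful remark that the search bound works whether or not \emph{optimum} is updated during the second loop. However, you only prove one of the two inclusions the theorem asserts: you show that the output is the set of classes minimizing $S(D,-)$, and that this set \emph{contains} every class inducing a normal toric degeneration. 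The theorem claims the output is the list of \emph{all} classes giving rise to such flags, i.e.\ also that every class in the output does induce a normal toric degeneration. This converse is exactly where the hypothesis that at least one normal-degeneration flag exists must enter, and your proof never uses that hypothesis --- a telltale sign of the gap. Theorem \ref{thmnormsurfarea} is only a necessary condition, so a priori the minimizer set could contain spurious classes.

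The missing step is a surface-specific Ehrhart/Hilbert comparison. For $k$ such that $\Delta_{Y_\bullet}(kD)$ is integral, its Ehrhart polynomial is $\operatorname{vol}(\Delta_{Y_\bullet}(kD))\,t^2+\tfrac{1}{2}A(\Delta_{Y_\bullet}(kD))\,t+1$; the leading coefficient is determined by $(kD)^2$ alone, the middle one is $\tfrac{k}{2}S(D,[Y_1])$ by Lemma \ref{lemnormpoint}, and the constant term is always $1$. By Theorem \ref{thmnormaldef} (or the corollary following it), the hypothesized flag with a normal toric degeneration forces its Ehrhart polynomial to coincide with the Hilbert polynomial of $kD$; in particular that Hilbert polynomial has constant term $1$ and second coefficient equal to the minimal value $\tfrac{k}{2}\min S(D,-)$. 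Hence every other minimizer produced by the algorithm has the \emph{same} degree-two Ehrhart polynomial, so its normal defect vanishes for $m\gg 0$, and after replacing $k$ by a suitable multiple Theorem \ref{thmnormaldef} shows that it, too, induces a normal toric degeneration. Adding this paragraph (which is where dimension two and the existence hypothesis are genuinely used) closes the gap and yields the equality of the two lists claimed in the statement.
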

\begin{proof}
This follows from Theorem \ref{thmnormsurfarea},  the proof of Theorem \ref{thmexminsurf} and Lemma \ref{lemnormpoint}.
\end{proof}

\section{Normal toric Degenerations of (weak) del Pezzo Surfaces}
In this paragraph we will use our previous findings and additional ideas to construct normal toric degenerations of \emph{(weak) del Pezzo surfaces}. 

\subsection{Normal toric degeneration of smooth del Pezzo surfaces}
Let us first present some basic facts about smooth del Pezzo surfaces.
\begin{defi}
We call $X$ a del Pezzo surface if it is a surface and its anticanonical divisor $-K_X$ is ample.
\end{defi}
Before we give the characterization of smooth del Pezzo surfaces, let us define what we mean by points in general position.
\begin{defi}
We say that $1\leq r \leq 8$ distinct points $p_1,\dots, p_8$ in $\mathbb{P}^2$ are in \emph{general position}
if:
\begin{itemize}
\item No three of them lie on a line.
\item No six of them lie on a conic.
\item No eight of them lie on a cubic with a singularity at some of the $p_i$.
\end{itemize}
\end{defi}
We can now state the well known characterization of smooth del Pezzo surfaces.
\begin{thm}
Up to isomorphy the smooth del Pezzo surfaces are  given by $\mathbb{P}^1\times \mathbb{P}^1$ or the blow-up of $\mathbb{P}^2$ in $0\leq r\leq 8$ points in general position.
\end{thm}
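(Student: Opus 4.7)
The plan is to prove both directions of the classification separately, leaning on standard results from the theory of rational surfaces.

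For the forward direction, I would verify ampleness of $-K_X$ in each case via the Nakai--Moishezon criterion. For $\mathbb{P}^1\times \mathbb{P}^1$ one has $-K_X=2F_1+2F_2$, which obviously pairs positively with every curve. For $\pi\colon X_r\to \mathbb{P}^2$ the blow-up at $p_1,\dots,p_r$ in general position with exceptional divisors $E_i$, we have $-K_{X_r}=3\pi^\ast H-\sum_{i=1}^r E_i$ and $(-K_{X_r})^2=9-r$, which forces $r\leq 8$. It then suffices to exhibit, for every irreducible curve $C\subset X_r$, the inequality $-K_{X_r}\cdot C>0$. A case analysis on the strict transforms of lines, conics, and singular cubics through the $p_i$ shows that the three general-position conditions are precisely what rule out irreducible curves of non-positive anticanonical degree.

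For the converse, let $X$ be a smooth del Pezzo surface. I would first establish rationality: since $-K_X$ is ample, Kodaira vanishing gives $h^i(\mathcal{O}_X)=h^i(-K_X+K_X)=0$ for $i>0$, and $h^0(2K_X)=0$ because $-K_X$ is ample. Castelnuovo's criterion then yields that $X$ is rational. Next I would run the minimal model program, contracting $(-1)$-curves one at a time. The key observation is that the del Pezzo property is preserved under such a contraction $f\colon X\to X'$: writing $f^\ast(-K_{X'})=-K_X+E$ with $E$ the contracted curve, one checks using the projection formula that $-K_{X'}$ remains ample. Since the MMP terminates and the minimal smooth rational surfaces are $\mathbb{P}^2$ and the Hirzebruch surfaces $\mathbb{F}_n$, we reach one of these. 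The negative section $C_0\subset \mathbb{F}_n$ satisfies $-K_{\mathbb{F}_n}\cdot C_0=2-n$, so the del Pezzo condition forces $n\leq 1$; and $\mathbb{F}_1$ is already $\mathrm{Bl}_p(\mathbb{P}^2)$, so the terminal model is either $\mathbb{P}^2$ or $\mathbb{F}_0=\mathbb{P}^1\times \mathbb{P}^1$. Finally, if $X$ were the blow-up of $\mathbb{P}^2$ at a configuration failing the general-position hypothesis, the strict transform of the offending line, conic, or singular cubic would give an irreducible curve of non-positive intersection with $-K_X$, contradicting ampleness; this yields the converse.

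The main technical obstacle is the stability step: showing that the del Pezzo property is preserved under contraction of a $(-1)$-curve, so that the MMP stays within the class of del Pezzo surfaces. Once this is in place the remaining arguments are essentially numerical comparisons using intersection theory and classical facts about minimal rational surfaces. A secondary bookkeeping subtlety is that $\mathbb{F}_0$ blown up at a point is isomorphic to $\mathbb{P}^2$ blown up at two points, so the two families in the statement overlap only at this level and no additional examples arise.
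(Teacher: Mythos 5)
The paper does not prove this statement at all: it is quoted as the classical classification of smooth del Pezzo surfaces, with the relevant facts attributed to \cite{H77}, V.4 and standard references, so there is no internal proof to compare against. Your outline is the standard textbook argument (Nakai--Moishezon in the forward direction; rationality via Kodaira vanishing plus Castelnuovo and then contraction of $(-1)$-curves to a minimal rational surface in the converse), and as a route it is sound. Two remarks on where the actual work lies. First, the step you single out as the main obstacle --- that the del Pezzo property survives blowing down a $(-1)$-curve --- is in fact the easy part: with $f^\ast(-K_{X'}) = -K_X + E$ one gets $-K_{X'}\cdot C' = -K_X\cdot \widetilde{C} + E\cdot\widetilde{C} > 0$ for every irreducible $C'$ with strict transform $\widetilde{C}$, and $(-K_{X'})^2 = (-K_X)^2 + 1 > 0$, so Nakai--Moishezon applies immediately. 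The genuinely nontrivial step is the one you compress into ``a case analysis'': to show that the three general-position conditions suffice for ampleness you must prove that \emph{every} irreducible curve $C = aH - \sum b_i E_i$ with $-K_{X_r}\cdot C \leq 0$ is an exceptional curve or the strict transform of a line through three of the points, a conic through six, or a cubic through eight with a singularity at one of them; this requires bounding $a$ (equivalently the degree of the image of $C$ in $\mathbb{P}^2$), typically via the genus formula and a Cauchy--Schwarz type estimate, and is the heart of the forward direction.

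Second, in the converse there is a gap you do not address: reversing the contraction sequence only exhibits $X$ as an iterated blow-up of $\mathbb{P}^2$ (or $\mathbb{P}^1\times\mathbb{P}^1$) at points that may be \emph{infinitely near}, whereas the statement requires $r$ distinct points of $\mathbb{P}^2$. You must rule out infinitely near centers: if a later center lies on an earlier exceptional curve, the strict transform of that exceptional curve is a smooth rational curve $C$ with $C^2 \leq -2$, and adjunction gives $-K_X\cdot C = C^2 + 2 \leq 0$, contradicting ampleness. With that one-line argument added, together with $r \leq 8$ from $(-K_X)^2 = 9 - r > 0$ and your observation that a violation of general position produces a curve of non-positive anticanonical degree, the converse is complete. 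So the proposal is a correct plan, but the degree-bound case analysis and the exclusion of infinitely near points are the places where a full proof still has to be written out.
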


Let $X_r$ be the smooth del Pezzo surface obtained by blowing up $r$ points in general position.
In the following we collect some more facts, we want to use:
\begin{enumerate}
\item We have 
\begin{align}
\operatorname{Pic}(X)\cong N^1(X)\cong \mathbb{Z}^{r+1}\cong\mathbb{Z}[H]\oplus \bigoplus_{i=1}^r \mathbb{Z}[E_i]
\end{align}
where $H$ is the total transform of a line in $\mathbb{P}^2$ and $E_i$ are the exceptional divisors.
\item The intersection form on $N^1(X)$ is determined by the identities $(H)^2=1$, $(H\cdot E_i)=0$, $(E_i\cdot E_j)=-\delta_{ij}$.
\item The anticanonical divisor is given by
\begin{align}
-K_{X_r}=3H-E_1-\dots - E_r.
\end{align}
\item  Every irreducible curve with negative self intersection number is a $(-1)$-curve, and it is, up to permutation of indices, linear equivalent to one of the following divisors:
\begin{align}
&E_1\\
&H-E_1-E_2\\
&2H-E_1-\dots E_5\\
&3H-2E_1-E_2-\dots - E_7\\
&4H-2E_1-2E_2-2E_3-E_4-\dots -E_8\\
&5H-2E_1-\dots - 2E_6-E_7-E_8\\
&6H-3E_1-2E_2-\dots - 2 E_8.
\end{align}
\item
Let $N=\{C_1,\dots, C_N\}$ be the set of $(-1)$-curves.
 The effective cone $\operatorname{Eff}(X)$ is generated by the negative curves $C_i$ in $N$. The nef cone is determined by the supporting hyperplanes 
\begin{align} 
 C_i^{\perp}:=\{[D]\in N^1(X)_{\mathbb{R}} \ | \ D\cdot C_i=0\}.
 \end{align}
\item The Zariski chambers of $X_r$ are also determined by the chamber decomposition of $\operatorname{Eff}(X)$ induced from the hyperplanes $C_i^{\perp}$.
\item Suppose $r=1,\dots, 6$. A divisor class $D\in \operatorname{Pic}(X_r)$ contains an irreducible curve $C\in |D|$ if and only if   $D$ is either (a) one of the $(-1)$-curves in $N$ or (b) $D$ is big and nef or (c) $D$ is  a conic (i.e. $D\cdot (-K_{X_r})=2$) and $D^2=0$.
\item Let $C\subset X_r$ be an irreducible smooth curve such that $C\equiv_{\text{lin}}aH-b_1E_1-\dots - b_r E_r$.
Then the genus of $C$ is given by
\begin{align}
g(C)= 1/2 (a-1)(a-2)-1/2 \sum_{i=1}^{r} b_i(b_i-1).
\end{align}
\end{enumerate}
As a reference, we refer to \cite[V.4]{H77} for $(a),(b),(c),(g)$, to \cite[Chapter 5]{ADH14} for properties $(d),(e)$.
Property $(f)$ is derived in \cite[Proposition 3.4]{BKS04}. Furthermore, $(h)$ is an easy calculation using Rieman-Roch.

Let us now apply Algorithm for a specific del Pezzo surface.
\begin{ex}
Let $X_5$ be the blow-up of $\mathbb{P}^2$ in five general points. That means that no three of them lie on a line. In this case, the negative curves are of the form
\begin{align}
E_1,\dots, E_5, \text{ and }H-E_i-E_j  \text { for } i,j=1,\dots, 5, \ i\neq j .
\end{align}
For a given divisor $D$ and  a curve $C$, we have all the necessary information to compute the Newton-Okounkov body $\Delta_{C\supset \{P\}}(D)$ for a very general point $P\in C$. With the help of a computer we can thus use our algorithm to compute  the set of optimal curves, and the optimal normalized surface area for a given divisor $D$.
We can use \cite[Example 1.3]{SX10} to efficiently compute the Hilbert polynomial of a given divisor $D$ as the Ehrhart polynomial of some polytope. Hence, we can compare the second coefficient of the Hilbert polynomial with the normalized surface area. If they agree the given Newton-Okounkov body with respect to the curves found by the algorithm induce normal toric degenerations.
Running the algorithm for some randomly chosen divisors gives the following result. Note that all the divisor classes are represented by the basis $H,E_1,\dots, E_5$.
{\center
\begin{table}[H]
\scriptsize
\begin{tabular}{l|m{25em}|m{5em}|m{5em}}
\hline
\textbf{D} & \textbf{optimal curves} & \textbf{\makecell{min.\\ $S(D,[C])$} }&\textbf{\makecell{2nd coef.\\ of $h_D(t)$} } \\
\hline
$(6, -1, -1, -2, -3, -4)$ & 
\begin{tabular}{m{10em}|m{10em}}
nef curves & negative curves\\
\hline
$(4, -2, 0, -2, -2, -2)$, $(4, -2, 0, -2, -2, -2)$, $(3, -1, -1, -1, -1, -2)$, $(2, -1, -1, -1, 0, -1)$, $(2, -1, -1, 0, -1, -1)$, $(2, -1, 0, -1, -1, -1)$, $(4, 0, -2, -2, -2, -2)$, $(2, 0, -1, -1, -1, -1)$, $(2, 0, 0, -1, -1, -1)$, $(1, 0, 0, -1, 0, 0)$, $(1, 0, 0, 0, -1, 0)$, 
$(1, 0, 0, 0, 0, -1)$ &

$(0, 1, 0, 0, 0, 0)$, $(0, 0, 1, 0, 0, 0)$, $(0, 0, 0, 1, 0, 0)$, $(0, 0, 0, 0, 1, 0)$, $(0, 0, 0, 0, 0, 1)$, $(1, -1, -1, 0, 0, 0)$, $(1, 0, -1, -1, 0, 0)$, $(1, 0, 0, -1, -1, 0)$, $(1, -1, 0, -1, 0, 0)$, $(1, -1, 0, 0, -1, 0)$, $(1, 0, -1, 0, -1, 0)$, $(1, 0, 0, 0, -1, -1)$, $(1, 0, 0, -1, 0, -1)$, $(1, 0, -1, 0, 0, -1)$, $(1, -1, 0, 0, 0, -1)$, $(2, -1, -1, -1, -1, -1)$
\end{tabular}
  & 6 & 6 \\ 
\hline 
$(6, -1, -3, -1, -2, -3)$ &
\begin{tabular}{m{10em}|m{10em}}
nef curves & negative curves\\
\hline

$(2, -1, -1, -1, 0, -1)$, $(2, -1, -1, -1, 0, -1)$, $(2, -1, -1, 0, -1, -1)$, $(2, 0, -1, -1, -1, -1)$, $(1, 0, -1, 0, 0, 0)$, $(1, 0, 0, 0, 0, -1)$
&

$(0, 1, 0, 0, 0, 0)$, $(0, 0, 1, 0, 0, 0)$, $(0, 0, 0, 1, 0, 0)$, $(0, 0, 0, 0, 1, 0)$, $(0, 0, 0, 0, 0, 1)$, $(1, -1, -1, 0, 0, 0)$, $(1, 0, -1, -1, 0, 0)$, $(1, 0, 0, -1, -1, 0)$, $(1, -1, 0, -1, 0, 0)$, $(1, -1, 0, 0, -1, 0)$, $(1, 0, -1, 0, -1, 0)$, $(1, 0, 0, 0, -1, -1)$, $(1, 0, 0, -1, 0, -1)$, $(1, 0, -1, 0, 0, -1)$, $(1, -1, 0, 0, 0, -1)$, $(2, -1, -1, -1, -1, -1)$ 
\end{tabular}
 & 8
 & 8 \\ \hline
 \end{tabular} 
 \end{table}
 
\begin{table}[H]
\scriptsize
\begin{tabular}{l|m{25em}|m{5em}|m{5em}}
\hline
\textbf{D} & \textbf{optimal curves} & \textbf{\makecell{min.\\ $S(D,[C])$} }&\textbf{\makecell{2nd coef.\\ of $h_D(t)$} } \\
\hline

$ (8, -3, -2, -2, -2, -3)$ &
\begin{tabular}{m{10em}|m{10em}}
nef curves & negative curves\\
\hline
\center $\emptyset $ &
$(0, 1, 0, 0, 0, 0)$, $(0, 0, 1, 0, 0, 0)$, $(0, 0, 0, 1, 0, 0)$, $(0, 0, 0, 0, 1, 0)$, $(0, 0, 0, 0, 0, 1)$, $(1, -1, -1, 0, 0, 0)$, $(1, 0, -1, -1, 0, 0)$, $(1, 0, 0, -1, -1, 0)$, $(1, -1, 0, -1, 0, 0)$, $(1, -1, 0, 0, -1, 0)$, $(1, 0, -1, 0, -1, 0)$, $(1, 0, 0, 0, -1, -1)$, $(1, 0, 0, -1, 0, -1)$, $(1, 0, -1, 0, 0, -1)$, $(1, -1, 0, 0, 0, -1)$, $(2, -1, -1, -1, -1, -1)$ 
\end{tabular}
&12&12 \\ \hline
$(4, -1, -1, -1, 0, -1)$ &
\begin{tabular}{m{10em}|m{10em}}
nef curves & negative curves\\
\hline

$(2, -1, -1, -1, 0, -1)$, $(2, -1, -1, -1, 0, -1)$, $(1, -1, 0, 0, 0, 0)$, $(1, 0, -1, 0, 0, 0)$, $(1, 0, 0, -1, 0, 0)$, $(1, 0, 0, 0, 0, -1)$ 
&
$(0, 1, 0, 0, 0, 0)$, $(0, 0, 1, 0, 0, 0)$, $(0, 0, 0, 1, 0, 0)$, $(0, 0, 0, 0, 1, 0)$, $(0, 0, 0, 0, 0, 1)$, $(1, -1, -1, 0, 0, 0)$, $(1, 0, -1, -1, 0, 0)$, $(1, 0, 0, -1, -1, 0)$, $(1, -1, 0, -1, 0, 0)$, $(1, -1, 0, 0, -1, 0)$, $(1, 0, -1, 0, -1, 0)$, $(1, 0, 0, 0, -1, -1)$, $(1, 0, 0, -1, 0, -1)$, $(1, 0, -1, 0, 0, -1)$, $(1, -1, 0, 0, 0, -1)$, $(2, -1, -1, -1, -1, -1)$ 
\end{tabular}
&8 &
8 \\ \hline
$(7, -4, 0, -2, -3, -3)$&
\begin{tabular}{m{10em}|m{10em}}
nef curves & negative curves\\
\hline

$(3, -2, -1, -1, -1, -1)$, $(3, -2, -1, -1, -1, -1)$, $(2, -1, -1, -1, -1, 0)$, $(2, -1, -1, -1, 0, -1)$, $(2, -1, -1, 0, -1, -1)$, $(4, -2, 0, -2, -2, -2)$, $(3, -2, 0, -1, -1, -1)$, $(2, -1, 0, -1, -1, -1)$, $(2, -1, 0, -1, -1, 0)$, $(2, -1, 0, -1, 0, -1)$, $(2, -1, 0, 0, -1, -1)$, $(1, -1, 0, 0, 0, 0)$, $(1, 0, 0, -1, 0, 0)$, $(1, 0, 0, 0, -1, 0)$, $(1, 0, 0, 0, 0, -1)$] 
&
$(0, 1, 0, 0, 0, 0)$, $(0, 0, 1, 0, 0, 0)$, $(0, 0, 0, 1, 0, 0)$, $(0, 0, 0, 0, 1, 0)$, $(0, 0, 0, 0, 0, 1)$, $(1, -1, -1, 0, 0, 0)$, $(1, 0, -1, -1, 0, 0)$, $(1, 0, 0, -1, -1, 0)$, $(1, -1, 0, -1, 0, 0)$, $(1, -1, 0, 0, -1, 0)$, $(1, 0, -1, 0, -1, 0)$, $(1, 0, 0, 0, -1, -1)$, $(1, 0, 0, -1, 0, -1)$, $(1, 0, -1, 0, 0, -1)$, $(1, -1, 0, 0, 0, -1)$, $(2, -1, -1, -1, -1, -1)$ 
\end{tabular}
& 9 & 9 \\ \hline
\end{tabular} 
\end{table}
}
\normalsize

We can make several conjectures from this  example. First of all in each example the optimal  normalized surface area is equal to the second coefficient of the Hilbert polynomial of $D$. Thus, in each example we do indeed get normal toric degenerations.
Moreover, in each example all negative curves are optimal.
We will see later that this is true for all varieties $X_r$ for $r=1,\dots, 6$.
\end{ex}

The next theorem describes some conditions on $X$ and on the flag $Y_\bullet$ which make sure that $\Delta_{Y_\bullet}(D)$ induces a normal toric degeneration.
\begin{thm} \label{thmnormaldegminusone}
Let $X$ be a smooth surface, admitting  integral Zariski decompositions. Let $Y_\bullet=(C\supseteq \{P\})$ be an admissible flag such that  $C\cong \mathbb{P}^1$ and $-K_{X}-C$ defines a big and nef class. Let $D$ be a big divisor on $X$. Then $\Delta_{Y_\bullet}(D)$ induces a normal toric degeneration if and only if $\Delta_{Y_\bullet}(D)$ is rational polyhedral.
 \end{thm}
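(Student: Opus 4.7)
The ``only if'' direction is immediate: if $\Delta_{Y_\bullet}(D)$ induces a normal toric degeneration, then by definition $\Gamma_{Y_\bullet}(D)$ is finitely generated, and in particular $\Delta_{Y_\bullet}(D)$ is rational polyhedral. For the converse, the plan is to invoke Theorem \ref{thmnormaldef} and show that the normal defect vanishes after passing to a sufficiently divisible multiple of $D$. I may replace $D$ by a multiple and assume that $\Delta_{Y_\bullet}(D)$ is already an integral polytope with $\nu,\mu \in \mathbb{Z}$; the same argument will then apply verbatim to every $mD$, yielding $\operatorname{Def}_{\nu_{Y_\bullet},D}(m)=0$ for all $m$. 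Integrality of the Zariski decomposition forces $\alpha_D(t),\beta_D(t)\in \mathbb{Z}$ for every integer $t \in [\nu,\mu]$, so Theorem \ref{thmokounkovsurface} yields
\begin{align}
\bigl|\Delta_{Y_\bullet}(D)\cap \mathbb{Z}^2\bigr| = \sum_{t=\nu}^{\mu}\bigl((P_t \cdot C)+1\bigr),
\end{align}
where $P_t$ is the positive part of $D_t = D-tC$. The task is to match this count with $h^0(X,\struc_X(D))$.

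For the cohomological side, the valuation $\nu_1=\operatorname{ord}_C$ stratifies $H^0(X,\struc_X(D))$: every section vanishes on $C$ to order at least $\nu$ and at most $\mu$, so the one-dimensional leaves property together with telescoping give
\begin{align}
h^0(X,\struc_X(D)) = \sum_{t=\nu}^{\mu}\Bigl(h^0(\struc_X(D-tC))-h^0(\struc_X(D-(t+1)C))\Bigr).
\end{align}
By \cite[Proposition 2.1]{KLM12}, $C$ does not occur in $N_t$ for $t\in [\nu,\mu]$; since sections of an effective divisor coincide with sections of its positive part and this identification is compatible with the order of vanishing along $C$, each summand equals $h^0(\struc_X(P_t))-h^0(\struc_X(P_t-C))$.

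The final step is to reduce each such difference to $(P_t\cdot C)+1$ via the restriction sequence
\begin{align}
0 \to \struc_X(P_t-C) \to \struc_X(P_t) \to \struc_C(P_t|_C) \to 0.
\end{align}
Since $P_t$ is nef and $-K_X-C$ is big and nef by hypothesis, the twist $P_t - C - K_X = P_t + (-K_X-C)$ is the sum of a nef class and a big-and-nef class, hence itself big and nef. Kawamata--Viehweg vanishing then gives $H^1(X,\struc_X(P_t-C))=0$, so the restriction surjects, and $C\cong \mathbb{P}^1$ together with Riemann--Roch on $C$ produces $h^0(C,\struc_C(P_t|_C))=(P_t\cdot C)+1$. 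Summing over $t$ matches the two expressions and the normal defect vanishes. The main subtlety I anticipate is verifying uniformly throughout the interval $[\nu,\mu]$ that $C\not\subset \operatorname{supp}(N_t)$; this is exactly what lets both the passage to the positive part and the Kawamata--Viehweg step proceed without boundary pathologies.
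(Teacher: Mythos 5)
Your proposal is correct and follows essentially the same route as the paper: integrality of the Zariski decomposition together with $C\not\subset\operatorname{supp}(N_t)$ (from \cite[Proposition 2.1]{KLM12}), Kawamata--Viehweg vanishing applied via $-K_X-C$ big and nef, and Riemann--Roch on $C\cong\mathbb{P}^1$ give the same slice-by-slice matching of lattice points with sections, with the conclusion coming from Gordan's lemma (in your case packaged through Theorem \ref{thmnormaldef}, whose ``defect zero'' direction indeed goes through for big, not just very ample, divisors). The only presentational difference is that by comparing cardinalities (the normal defect) you sidestep the paper's reduction to a point $P$ outside the negative parts (Lemma \ref{lemnormpoint}), which the paper uses to identify the valuation points in each slice with all of its lattice points.
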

\begin{proof}
Let us first make some observations.
If $D$ is a nef divisor, then by assumption $D-C-K_X$ is big and nef. We can therefore use the Kawamata-Viehweg vanishing theorem to deduce that $H^1(X,\struc_{X}(D-C))=0$.
This implies that the restriction morphism $H^0(X,\struc_X(D))\to H^0(C,\struc_C(D))$ is surjective for every nef divisor $D$.
Our aim is to show that we have an equality
\begin{align}
\Gamma_k(D)=k\cdot \Delta_{Y_\bullet}(D)\cap \mathbb{Z}^2.
\end{align}
Then the statement follows by using Gordan's lemma.
We will  do this by considering the vertical $t$-slices of $k\Delta_{Y_\bullet}(D)$, i.e. points such that the first coordinate is equal to a fixed  integer $t\in [k\nu,k\mu]$. The second coordinate of the valuation points $\Gamma_k(D)$ in the $t$-slice are given by the valuation points of the restricted linear series $H^0(X,\struc_{X}(kD-tC))_{|C}$ of the valuation $\operatorname{ord}_P$.
Define $D_{k,t}:=kD-tC$, $P_{k,t}:=P(D_{k,t})$ and $N_{k,t}:=N(D_{k,t})$.
 By Theorem \ref{lemnormpoint}, we can without loss of generality assume that the point $P$ is not contained in the support  of the negative part $N_{k,t}$.  Since $X$ induces an integral Zariski decomposition,
we can replace the restricted linear series  $H^0(X,\struc_X(D_{k,t}))_{|C}$ with the linear series $H^0(X,\struc_X(P_{k,t}))_{|C}=H^0(C,\struc_C(P_{k,t}))$.
As $C\cong \mathbb{P}^1$, we can apply Riemann-Roch to deduce that 
\begin{align}
\dim H^0(C,\mathcal{O}_C(P_{t,k|C}))=(P_{t,k}\cdot C)+1.
\end{align}
 Hence, the valuation points in the $t$-slice are exactly all the points $(t,s)$ where $s\in \{0,\dots (P_{k,t}\cdot C) \}$. These are all the integer points in the $t$-slice of $k\Delta_{Y_\bullet}(D)$.
\end{proof}

We can use the above theorem to prove the following.
\begin{thm}
Let $X_r$ be the blow-up of $r$ general points in $\mathbb{P}^2$ for $r=1,\dots, 6$. Let $D$ be a big divisor on $X_r$, $C\subset X_r$ a negative curve and $P\in C$ an arbitrary point.
Then $\Delta_{C\supset \{P\}}(D)$ induces a normal toric degeneration.  
\end{thm}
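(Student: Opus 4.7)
The plan is to apply Theorem \ref{thmnormaldegminusone} to the pair $(X_r, Y_\bullet)$. That theorem asks for four things: (i) $X_r$ admits integral Zariski decompositions; (ii) $C \cong \pr^1$; (iii) $-K_{X_r} - C$ is big and nef; and (iv) $\Delta_{Y_\bullet}(D)$ is rational polyhedral. Items (i), (ii), and (iv) will be straightforward consequences of the facts (a)--(h) collected in the previous paragraph; the delicate point is the borderline case $r = 6$ of (iii).

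I would dispose of the easy items first. By fact (d), every negative curve on $X_r$ is a $(-1)$-curve, so Theorem \ref{thmzarintegral} with $b = 1$ yields the denominator bound $1^{\rho(X_r) - 1} = 1$, giving (i). For (ii), adjunction on any $(-1)$-curve gives $2g(C) - 2 = K_{X_r}\cdot C + C^2 = -2$, hence $g(C) = 0$ and $C \cong \pr^1$. For (iv), Theorem \ref{thmokounkovsurface} reduces rational polyhedrality to rationality of $\mu$, $\alpha(\mu)$, and $\beta(\mu)$; since $X_r$ is a Mori dream surface, its pseudo-effective cone is rational polyhedral, so $\mu \in \Q$, and the integral Zariski decomposition from (i) then makes $\alpha(\mu), \beta(\mu) \in \Q$.

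The core of the argument is (iii). By (e), $-K_{X_r} - C$ is nef iff it has non-negative intersection with every negative curve. The intersection with $C$ itself is $(-K_{X_r}) \cdot C - C^2 = 1 - (-1) = 2$; for a distinct $(-1)$-curve $C'$, the intersection is $1 - C \cdot C'$, which is non-negative because on a del Pezzo surface of degree $\ge 3$ any two distinct $(-1)$-curves meet in at most one point (a finite case analysis against the list in (d)). The self-intersection is
\begin{equation}
(-K_{X_r} - C)^2 = (9 - r) - 2 - 1 = 6 - r,
\end{equation}
so bigness holds for $r \le 5$ and Theorem \ref{thmnormaldegminusone} applies verbatim in those cases.

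The main obstacle is $r = 6$, where $-K_{X_6} - C$ is nef with self-intersection zero, hence not big. Here I would revisit the proof of Theorem \ref{thmnormaldegminusone}: the bigness of $-K_X - C$ enters only to ensure that, after reducing $D$ to its (big and nef) positive part, the sum $D - C - K_X = D + (-K_X - C)$ is big and nef so that Kawamata--Viehweg vanishing forces $H^1(X_r, \mathcal{O}(D - C)) = 0$. But when $D$ is big and nef and $-K_{X_6} - C$ is merely nef, the inequality
\begin{equation}
\bigl(D + (-K_{X_6} - C)\bigr)^2 \ge D^2 > 0,
\end{equation}
combined with nefness of both summands, shows the sum is still big and nef. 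Hence the argument of Theorem \ref{thmnormaldegminusone} extends to $r = 6$, and the conclusion follows.
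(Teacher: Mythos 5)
For $r\le 5$ your argument is correct and is essentially the paper's: integrality of Zariski decompositions from Theorem \ref{thmzarintegral} (only $(-1)$-curves occur), rationality of $C$ by adjunction, rational polyhedrality of $\Delta_{Y_\bullet}(D)$ from the rational polyhedral effective cone, and $-K_{X_r}-C$ nef with $(-K_{X_r}-C)^2=6-r>0$, so Theorem \ref{thmnormaldegminusone} applies. You are also right to flag $r=6$: there $(-K_{X_6}-C)^2=0$, so $-K_{X_6}-C$ is nef but not big, and the paper's assertion that ``a calculation shows $-K_X-C$ is big and nef'' is simply wrong in that case. So you located a genuine weak point that the paper's own proof glosses over.

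However, your patch for $r=6$ does not close the gap. In the proof of Theorem \ref{thmnormaldegminusone}, Kawamata--Viehweg vanishing is not applied to the original big and nef $D$ alone but to the positive parts $P_{k,t}$ of $kD-tC$ for every integer $t\in[k\nu,k\mu]$; at the endpoint $t=k\mu$ this positive part is nef but in general not big, and your inequality then only yields $\bigl(P_{k,\mu}+(-K_{X_6}-C)\bigr)^2\ge 0$. By the Hodge index theorem the sum fails to be big exactly when $P_{k,\mu}$ is proportional to $-K_{X_6}-C$, and this really happens: take $D=-K_{X_6}$ and $C=E_1$. Then $D-tE_1$ is nef for $0\le t\le 1$, $\mu=1$, and $D_\mu=-K_{X_6}-E_1=:F$ is nef with $F^2=0$, $F\cdot E_1=2$, so $\Delta_{Y_\bullet}(D)$ is the trapezoid $\{(t,y)\ :\ 0\le t\le 1,\ 0\le y\le 1+t\}$ for every choice of $P\in E_1$. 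The slice of $m\Delta_{Y_\bullet}(D)$ at $t=m$ contains $2m+1$ lattice points, whereas $h^0(X_6,mF)=m+1$ by Riemann--Roch and Kawamata--Viehweg vanishing, so at most $m+1$ of them are valuation points; altogether the number of lattice points of $m\Delta_{Y_\bullet}(D)$ exceeds $h^0(-mK_{X_6})$ by exactly $m$. Thus the normal defect is positive for every multiple of $D$ and, by Theorem \ref{thmnormaldef}, no normal toric degeneration is induced. So the $r=6$ case cannot be rescued by any refinement of the vanishing argument: the statement itself fails there (and the paper's proof is erroneous at precisely this point), while your treatment of $r\le 5$ stands.
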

\begin{proof}
Since $X_r$ has a rational polyhedral effective cone, the Newton-Okounkov body $\Delta_{Y_\bullet}(D)$ is rational polyhedral for all big divisors and admissible flags $Y_\bullet$. It follows from Theorem \ref{thmzarintegral} and the fact that the only negative curves in $X_r$ are $(-1)$-curves, that $X_r$ admits  integral Zariski decompositions.
The negative curves of $X_r$ are either exceptional divisors $E_i$, lines $H-E_i-E_j$ or conics $2H-E_{i_1}-\dots - E_{i_4}$. All of them are rational. In addition, a calculation shows that the divisor $-K_{X}-C$ is big and nef for all possible negative curves $C$. Now, we can use Theorem \ref{thmnormaldegminusone}, which proves the claim.
\end{proof}
\begin{remark}
Note that for $X_r$, where $r=7,8$ the assumptions of Theorem \ref{thmnormaldegminusone} are not fulfilled for all negative curves.
Consider for example the negative curve $C=3H-2E_1-E_2-\dots E_7$ on $X_7$. Then $-K_{X_7}-C=E_1$ which is clearly not big and nef.
\end{remark}


\subsection{Normal toric degeneration on weak del Pezzo surfaces}
In this paragraph we want to discuss examples of weak del Pezzo surfaces which induce normal toric  degenerations.
\begin{defi}
We call $X$ a \emph{weak del Pezzo surface } if it is a surface and its anticanonical divisor $-K_X$ is nef and big.
 \end{defi}
The characterization of smooth weak del Pezzo surfaces is a bit more complex. Roughly speaking, more constellation of points to blow-up are allowed.
One of the main differences to del Pezzo surfaces is that no longer only $(-1)$-curves occur as negative curves but also $(-2)$-curves.

We will focus on two examples. First, the blow-up of six points on a conic the and second, the blow-up of four points where three of them lie on a line.

\subsubsection{Blow-up of six points on a conic} 

Consider the variety $S_6$ which is given as the blow-up of six points in $\mathbb{P}^2$ such that no three of them are collinear but all six lie on a single conic. The negative curves are: 
\begin{enumerate}
\item $E_1,\dots, E_6$  the exceptional divisors
\item $H-E_i-E_j$ for $i\neq j$, $i,j\in \{1,\dots, 6\}$ the  strict transforms of the lines through two points.
\item $2H-E_1-\dots -E_6$ the strict transform of the conic through all the six points.
\end{enumerate}
The first two types of curves are $(-1)$-curves and the last one is a $(-2)$-curve.

\begin{thm}
Let $D$ be a big divisor on $S_6$. Let furthermore $C$ be the  strict transform of the conic going through the six chosen points, and $P\in C$ an arbitrary point. Then $\Delta_{C\supset \{P\}}(D)$ induces a normal toric degeneration.
\end{thm}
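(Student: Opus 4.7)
The plan is to follow the strategy of Theorem~\ref{thmnormaldegminusone}, whose three geometric inputs are all satisfied in the present setting. The conic $C=2H-E_1-\cdots-E_6$ has arithmetic genus zero by the genus formula (property (h) of the list above), so $C\cong\mathbb{P}^1$. The auxiliary class $-K_{S_6}-C=(3H-\sum E_i)-(2H-\sum E_i)=H$ is big and nef, being the pull-back of a line from $\mathbb{P}^2$. Finally, because the effective cone of $S_6$ is rational polyhedral (spanned by the finitely many listed negative curves), $\Delta_{Y_\bullet}(D)$ is automatically rational polyhedral by Theorem~\ref{thmokounkovsurface}.

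The main obstacle is that $S_6$ does not literally satisfy condition $(**)$: because of the $(-2)$-curve $C$, the Zariski decomposition of a generic integral divisor on $S_6$ can be half-integral, so one cannot directly invoke the hypothesis ``$X$ admits integral Zariski decompositions'' of Theorem~\ref{thmnormaldegminusone}. Theorem~\ref{thmzarintegral} nonetheless gives a uniform bound $m_0$ on the denominators appearing in Zariski decompositions on $S_6$, so after replacing $D$ with $m_0 D$---which is harmless, since $\Delta_{Y_\bullet}(D)$ induces a normal toric degeneration if and only if $\Delta_{Y_\bullet}(m_0 D)$ does---every divisor of the form $kD-tC$ arising in the construction of $\Delta_{Y_\bullet}(D)$ admits an integral Zariski decomposition. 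This is the step I expect to take the most care.

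With this reduction in place, the proof of Theorem~\ref{thmnormaldegminusone} runs verbatim. For a general $P\in C$, Lemma~\ref{lemnormpoint} (applicable once integral Zariski decompositions are available) lets us assume $P$ avoids the support of every relevant negative part $N_{k,t}$ of $kD-tC$, so that sections of $kD-tC$ are identified with sections of the now-integral positive part $P_{k,t}$. The restriction $H^0(X,\mathcal{O}(P_{k,t}))\to H^0(C,\mathcal{O}_C(P_{k,t}|_C))$ is surjective by Kawamata--Viehweg vanishing applied to the big and nef class $P_{k,t}-C-K_{S_6}=P_{k,t}+H$. Riemann--Roch on $C\cong\mathbb{P}^1$ then delivers exactly $(P_{k,t}\cdot C)+1$ valuation points in each $t$-slice, matching the number of lattice points in the slice of $k\Delta_{Y_\bullet}(D)$. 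Gordan's lemma then closes the argument.
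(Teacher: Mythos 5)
Your identification of the three inputs (genus zero of $C$, the class $-K_{S_6}-C=H$ being big and nef, rational polyhedrality of $\Delta_{Y_\bullet}(D)$) is correct and matches the paper, but the step you flagged as delicate — the reduction to integral Zariski decompositions — is where your argument breaks down. Replacing $D$ by $m_0D$ does not make the Zariski decompositions of the divisors $k m_0 D - tC$ integral: the denominator bound of Theorem \ref{thmzarintegral} applies to each integral divisor class separately, and scaling only clears denominators for classes that are themselves $m_0$-th multiples of integral classes. In the lattice-point count one must treat \emph{every} integer slice $t\in[k\nu,k\mu]$ of $k\Delta_{Y_\bullet}(m_0D)$, and for $t$ not divisible by $m_0$ the class $km_0D-tC$ is not such a multiple, so its negative part may a priori still be non-integral (on $S_6$ the bound only gives denominators up to $2^{\rho-1}$ because of the $(-2)$-curve). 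So the scaling trick does not deliver the hypothesis of Theorem \ref{thmnormaldegminusone}, and the verbatim transfer of that proof is not yet justified.

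The gap is repairable, and this is exactly how the paper proceeds: by the proof of Proposition 2.1 in \cite{KLM12}, the flag curve $C$ is never a component of the negative part $N_t$ of $D-tC$ for $t\in[\nu,\mu]$. Since $C$ is the \emph{only} $(-2)$-curve on $S_6$, every negative part occurring in the construction is supported on $(-1)$-curves, and Theorem \ref{thmzarintegral} applied with that support (so $b=1$) shows that for integral $D_t$ both $P_t$ and $N_t$ are already integral — no rescaling of $D$ is needed. With integrality established in this way, the remainder of your argument (the reduction to a point off the negative parts via Lemma \ref{lemnormpoint}, Kawamata--Viehweg vanishing using that $-K_{S_6}-C=H$ is big and nef, Riemann--Roch on $C\cong\mathbb{P}^1$ to count $(P_{k,t}\cdot C)+1$ valuation points per slice, and Gordan's lemma) is sound and coincides with the paper's proof of Theorem \ref{thmnormaldegminusone}.
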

\begin{proof}
The proof works similar as before with the only difference that there are also $(-2)$-curves occurring. This means that it is not clear whether $S_6$ admits integral Zariski decompositions.

However, a computation shows that $-K_X-C$ is big and nef. 
We know that $C$ is not contained in the support of $N_t$ for $\nu\leq t \leq \mu$ (see proof of Proposition 2.1 in \cite{KLM12}).

 Since $C$ is the only $(-2)$-curve in $S_6$, the support of the divisors of $N_t$ only consists of $(-1)$-curves. We can thus use Theorem \ref{thmzarintegral} to deduce that if $D_t$ is integral then also $P_t$ and $N_t$ are integral. Then the proof works exactly as in Theorem \ref{thmnormaldegminusone}. \end{proof}

In order to be able to compute Newton-Okounkov bodies, we need to know the Zariski chambers of the effective cone of $S_6$.
Note that unlike in the case of del Pezzo surfaces, the decomposition of Zariski chambers is not necessarily given by the decomposition induced from the hyperplanes  $C^{\perp}$ where $C$ is in the set of negative curves. This is a consequence of the fact that there exists a $(-2)$-curve.

 In general it is quite difficult to describe this decomposition. However, in order to compute Newton-Okounkov bodies of a divisor $D$ with respect to the curve $C$ given by the conic going through the six points, we just need to compute the wall crossings of the segment $D-tC$ for $t\in [\nu,\mu]$.
The next lemma describes these crossing points.
\begin{lem}\label{lemzarint}
Let $D$ be a big divisor on $S_6$.
Then the intersections of the divisors $D-tC$ for $t\in (\nu,\mu)$  with the boundary of the Zariski chambers all lie in the set $\bigcup C_i^{\perp}$ where the union is taken over all $(-1)$-curves.
\end{lem}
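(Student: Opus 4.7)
The plan hinges on two observations about $S_6$. First, from the discussion just before the lemma and \cite[Proposition 2.1]{KLM12}, the $(-2)$-curve $C$ never appears in $\operatorname{Supp}(N_t)$ for $t\in[\nu,\mu]$; since $C$ is the unique $(-2)$-curve on $S_6$, every $N_t$ is supported on $(-1)$-curves, and each chamber wall crossed by the segment corresponds to some $(-1)$-curve entering or leaving $\operatorname{Supp}(N_t)$. Second, the intersection numbers $C\cdot E_i=1$ and $C\cdot(H-E_i-E_j)=0$ mean that $(D-tC)\cdot E_i=D\cdot E_i-t$ is strictly decreasing in $t$, while $(D-tC)\cdot(H-E_i-E_j)=D\cdot(H-E_i-E_j)$ is constant. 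Thus the segment meets a line-type perpendicular only in the degenerate situation where $D$ already lies on it (in which case the entire segment does, and its inclusion in $\bigcup C_i^\perp$ is automatic), and it meets each exceptional perpendicular transversally at the single point $t=D\cdot E_i$.

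The main step is to match actual chamber wall crossings with these transverse meetings. In a chamber whose support $\mathcal{S}$ consists of exceptional divisors, the intersection matrix is $-I$, so the Zariski decomposition reads $N_t=\sum_{i\in\mathcal{S}}(t-D\cdot E_i)E_i$, and the condition for an exceptional $E_j\notin\mathcal{S}$ to enter $\mathcal{S}$ simplifies to $t=D\cdot E_j$, which is exactly the crossing of $(E_j)^\perp$. The only alternative transition would be a line-type curve $H-E_i-E_j$ entering the support; but since $E_i\cdot(H-E_i-E_j)=1$, adjoining this curve while keeping $E_i$ or $E_j$ in $\mathcal{S}$ destroys negative-definiteness of the intersection matrix, so such a transition must be accompanied by the simultaneous exit of an exceptional from $\mathcal{S}$ (placing the wall on an exceptional perpendicular) or be pushed to the endpoint $t=\mu$.

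The main obstacle I anticipate is making the last assertion precise, namely ruling out genuine line-type transitions strictly inside $(\nu,\mu)$. I would handle this by showing that any such putative transition forces $P_{t_0}$ onto the boundary of the nef cone in a direction making $D-t_0C$ no longer big, so that necessarily $t_0=\mu$; the explicit formulas for $N_t$ and $P_t$ in an exceptional-only chamber, together with the characterisation of $\mu$ as the effectivity threshold, should suffice. Once this case analysis is complete, the lemma follows by combining the localisation of wall crossings to exceptional transitions with the transversality computation from the first paragraph.
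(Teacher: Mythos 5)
Your proposal has a genuine gap at its very first step. You assert that "each chamber wall crossed by the segment corresponds to some $(-1)$-curve entering or leaving $\operatorname{Supp}(N_t)$", but this is essentially the content of the lemma, not a given. A point $D_{t_0}$ lies on the boundary of a Zariski chamber exactly when $\operatorname{Null}(P_{t_0})\setminus\operatorname{Neg}(D_{t_0})\neq\emptyset$ (\cite[Proposition 1.5]{BKS04}), and a priori the extra null curve could be the $(-2)$-curve $C$ itself: one could have $P_{t_0}\cdot C=0$ with $C$ never entering the negative support, which would place $D_{t_0}$ on a wall lying in $C^{\perp}$ rather than in $\bigcup C_i^{\perp}$ over $(-1)$-curves. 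Ruling this out is the crux, and your computation of intersection numbers never touches it. The paper excludes it by observing that for $t\in(\nu,\mu)$ the vertical slice of $\Delta_{C\supset\{P\}}(D)$ at $\nu_1=t$ has positive length, i.e. $(P_t\cdot C)=\beta(t)-\alpha(t)>0$, so the new null curve $C'$ must be a $(-1)$-curve; then, since $N_{t_0}+C'$ is a negative divisor supported on $(-1)$-curves, its components are pairwise orthogonal (\cite[Proposition 3.4, Lemma 4.3]{BKS04}), giving $D_{t_0}\cdot C'=(P_{t_0}+N_{t_0})\cdot C'=0$, i.e. $D_{t_0}\in (C')^{\perp}$.

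Your treatment of the line-type curves also misreads the statement and introduces an unnecessary (and unsubstantiated) difficulty. The union in the lemma is over \emph{all} $(-1)$-curves, including the lines $H-E_i-E_j$, so there is nothing to "rule out" about line-type walls: if the new null curve at a wall is such an $L$, the orthogonality argument above gives $D_{t_0}\cdot L=0$ directly, which \emph{is} the desired conclusion (and, as you note, since $(D-tC)\cdot L$ is constant in $t$, the whole segment then lies in $L^{\perp}$). Your proposed remedy --- that a line-type transition forces $D-t_0C$ to stop being big and hence $t_0=\mu$ --- is both unneeded and unlikely to be provable, since $D-tC$ is big for all $t<\mu$. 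Finally, your explicit wall computation is carried out only in chambers whose negative support consists of exceptional curves; chambers whose support contains lines $H-E_i-E_j$ (which occur whenever $D\cdot(H-E_i-E_j)<0$, allowed for big non-nef $D$) are left unanalyzed. The paper's argument avoids all of this case-by-case bookkeeping precisely because it works with the abstract characterization of chamber boundaries plus orthogonality, rather than with the explicit list of negative classes on $S_6$.
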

\begin{proof}
The proof is very similar to Proposition $3.4$ in \cite{BKS04}. It is shown in the mentioned proof  that if $N$ is a negative divisor whose support contains only $(-1)$-curves, then all the irreducible components of $N$ are orthogonal.
Let us now suppose $D_t:=D-tC$ for $t\in (\nu,\mu)$ lies on the boundary of a Zariski chamber.
If we define for a divisor $D$ the sets
\begin{align}
\operatorname{Null}(D)&=\{ C \ | \ \text{irreducible with } (C\cdot D)=0\}\\
\operatorname{Neg}(D)&=\{C \ | \text{ irreducible component of } N(D)\},
\end{align}
 then according to \cite[Proposition 1.5]{BKS04}, 
this means that 
\begin{align}
\operatorname{Null}(P_t)\setminus \operatorname{Neg}(D_t)\neq \emptyset.
\end{align}

Let $C^\prime$ be a curve which lies in $\operatorname{Null}(P_{t})$ but not in $\operatorname{Neg}(D_t)$.
Then $C^\prime$ is a negative curve and $N_{D_t}+C^\prime$ is a negative divisor according to  \cite[Lemma 4.3]{BKS04}.
We want to show that $C^\prime\neq C$. Suppose that they are equal. Then $(P_t\cdot C)=0$. We know from the choice of $t$ that the slice $\Delta_{C\subset\{P\}}(D)_{\nu_1=t}$ has length bigger than $0$ for $t\in (\nu,\mu)$. However, this is a contradiction to $(P_t\cdot C)=0$. Hence, $C^\prime \neq C$ and thus $C^\prime$ is a $(-1)$-curve. It follows that the support of  $N_t+C^\prime$ consists of $(-1)$-curves and we conclude $(C^\prime\cdot N_{t})=0$ which implies that $(D_t\cdot C)=0$. This shows that $D_t\in C^\perp$.
\end{proof}

We are now able to present an example of a Newton-Okounkov body on $S_6$ which induces a normal toric degeneration.
\begin{ex}
Let us consider the divisor $D=4H-E_1-\dots E_6$. This is an ample divisor. 
The corresponding Newton-Okounkov body with respect to the curve $C=2H-E_1-\dots - E_6$ and a general point $P$ is illustrated in Figure \ref{exok1}.
\begin{figure}[H]

\begin{center}
\includegraphics[scale=0.5]{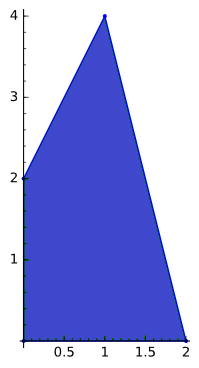}
\end{center}
\caption{N.-O. body of $D=4H-E_1-\dots, E_6$ on $S_6$}
\label{exok1}
\end{figure}
The Hilbert polynomial, which is equal to the Ehrhart polynomial of $\Delta_{Y_\bullet}(D)$, is given by
\begin{align}
P_D(t)=5t^2+3t+1.
\end{align}
\end{ex}
\subsection{Blow-up of four points three of them  on a line}
Let $L_3$ be the blow-up of $\mathbb{P}^2$ of four points where three points lie on a line. This is again a weak del Pezzo surface. The negative curves are:
\begin{enumerate}
\item $E_1,E_2,E_3,E_4$ the  exceptional divisors.
\item $H-E_4-E_2$, $H-E_4-E_3$,$H-E_4-E_1$ the strict transforms of the lines through two points.
\item $H-E_1-E_2-E_3$ the strict transform of the line through the three collinear points.
\end{enumerate}
The first two types of curves are $(-1)$- and the last one is a $(-2)$-curve. Analogously as in the previous section, we get the following result.
\begin{thm}
Let $D$ be a big divisor on $L_3$. Let furthermore $C=H-E_1-E_2-E_3$ be the line through the three chosen points, and $P\in C$ an arbitrary point. Then $\Delta_{C\supset \{P\}}(D)$ induces a normal toric degeneration.
\end{thm}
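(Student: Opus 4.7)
The plan is to mirror the proof just given for $S_6$, checking that the three hypotheses of Theorem \ref{thmnormaldegminusone} (in the relaxed form used in the $S_6$ argument) still hold for $L_3$ with the flag given by $C=H-E_1-E_2-E_3$: that $C\cong\mathbb{P}^1$, that $-K_{L_3}-C$ is big and nef, and that the Zariski decompositions $D_t=P_t+N_t$ of the divisors $D_t=D-tC$ for $t\in[\nu,\mu]$ are integral whenever $D_t$ is. Once these are in place, the argument of Theorem \ref{thmnormaldegminusone} applies verbatim and gives the result, since $L_3$ is a Mori dream surface and hence $\Delta_{C\supset\{P\}}(D)$ is automatically rational polyhedral.

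First I would verify the numerical ingredients. Rationality of $C$ is clear, as $C$ is the strict transform of a line through the three collinear points. A direct calculation in the basis $H,E_1,\ldots,E_4$ shows
\begin{align}
-K_{L_3}-C=(3H-E_1-E_2-E_3-E_4)-(H-E_1-E_2-E_3)=2H-E_4.
\end{align}
Its self-intersection is $(2H-E_4)^2=4-1=3>0$, so it is big. Its intersection numbers with each of the negative curves listed above are non-negative: with $E_i$ one gets $0$ for $i\leq 3$ and $1$ for $i=4$, with the three $(-1)$-lines $H-E_4-E_i$ one gets $1$, and with $C$ itself one gets $2$. Hence $-K_{L_3}-C$ is nef, which together with bigness gives what is needed.

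The key remaining point, just as for $S_6$, is the integrality of the Zariski decompositions of $D_t$. Here I would argue exactly as in the previous theorem. By (the proof of) Proposition 2.1 of \cite{KLM12}, $C$ itself does not appear in the support of $N_t$ for $t\in[\nu,\mu]$. Since $C$ is the only $(-2)$-curve on $L_3$, the support of every $N_t$ in this range consists entirely of $(-1)$-curves, and Theorem \ref{thmzarintegral} then yields integrality of $P_t$ and $N_t$ whenever $D_t$ is integral. With this in hand the argument of Theorem \ref{thmnormaldegminusone} goes through without change: Kawamata--Viehweg vanishing applied to $D'-C-K_{L_3}$ for nef divisors $D'$ gives surjectivity of the restriction $H^0(X,\mathcal{O}(D'))\to H^0(C,\mathcal{O}_C(D'))$; combined with Lemma \ref{lemnormpoint} to move $P$ off the support of each $N_{k,t}$ and Riemann--Roch on $C\cong\mathbb{P}^1$, the valuation points fill out every $t$-slice of $k\Delta_{C\supset\{P\}}(D)$, so Gordan's lemma gives normality. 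The only subtlety, which I expect to be the one worth double-checking carefully, is the classification of negative curves on $L_3$ used to exclude further $(-2)$-curves; once one is sure that $H-E_1-E_2-E_3$ is the unique $(-2)$-curve, the rest is mechanical.
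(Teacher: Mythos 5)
Your proposal is correct and follows essentially the same route as the paper, which proves the $L_3$ case "analogously" to the $S_6$ case: rationality of $C$, bigness and nefness of $-K_{L_3}-C=2H-E_4$, the observation that $C$ is the only $(-2)$-curve so the negative parts $N_t$ consist of $(-1)$-curves and Theorem \ref{thmzarintegral} gives integral Zariski decompositions, and then the slice-counting argument of Theorem \ref{thmnormaldegminusone} via Kawamata--Viehweg, Riemann--Roch on $C\cong\mathbb{P}^1$, and Gordan's lemma. Your explicit verification of the numerical facts (which the paper leaves implicit) is accurate, so nothing is missing.
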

\qed

Since $H-E_1-E_2-E_3$ is the only $(-2)$-curves, we can use an analog of Lemma \ref{lemzarint} in order to compute Newton-Okounkov bodies.

\begin{ex}
Let us consider the divisor $D=4H-E_1-E_2-E_3-E_4$. This is an ample divisor.
We want to compute the Newton-Okounkov body with respect to the curve $C=H-E_1-E_2-E_3$ and a very general point $P$ on $C$.
\begin{figure}[H]
\begin{center}
\includegraphics[scale=0.5]{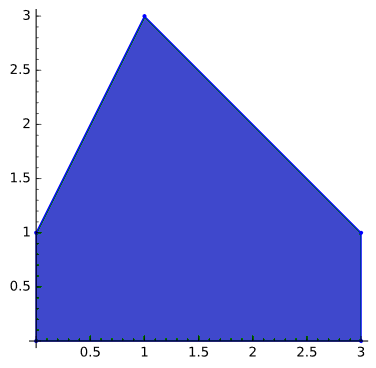}
\end{center}
\label{exok2}
\caption{N.-O. body of $D=4H-E_1-\dots- E_4$ on $L_3$}

\end{figure}
The Hilbert  polynomial of $D$ is given by:
\begin{align}
P_D(t)= 6t^2+4t+ 1.
\end{align}

\end{ex}

\section{Global Newton-Okounkov bodies on surfaces}
In this section we want to use our previous findings in order to compute global Newton-Okounkov bodies on (weak) del Pezzo surfaces. We will see that under good conditions the global semigroup $\Gamma_{Y_\bullet}(X)$ is finitely generated. Moreover, we will see how the generators of this semigroup give rise to generators of the Cox ring. We will illustrate our results for the varieties $X_5$ and $L_3$.

\subsection{ Generators of the global Newton-Okounkov body on surfaces}
In this section we want to generalize results from \cite{SS16} to arbitrary admissible flags.

Let us start by defining what we mean by a global Newton-Okounkov body.
\begin{defi}
Let $X$ be a projective variety.
Let $Y_\bullet$ be an admissible flag on $X$. Then we define the \emph{global Newton-Okounkov body of $X$} with respect to $Y_\bullet$ as the closure of
\begin{align}
\text{Cone}(\{ (\nu_{Y_\bullet}(s),[D]) \ | \ s\in H^0(X,\struc_X(D), \ D\in \text{Pic}(X))\}
\end{align}
in $\mathbb{R}^d\times N^1(X)_\mathbb{R}$. We denote it by $\Delta_{Y_\bullet}(X)$.
\end{defi}
Note that for any big divisor $D$, we have the following identity
\begin{align}
\Delta_{Y_\bullet}(D)=\Delta_{Y_\bullet}(X)\cap (\mathbb{R}^d\times \{[D]\}).
\end{align}

We will now focus on the case where $X$ is a smooth surface. Moreover, let us assume that $X$ admits a rational polyhedral pseudo-effective cone, e.g. if $X$ is a Mori dream surface. Let $C$ be a curve on $X$ and $P\in C$ a smooth point on $C$.
Let
\begin{align} 
D_1=P_1+N_1,\dots, D_r=P_r+N_r
\end{align}
 be the set of generators of the Zariski chambers with the property that $C$ is not contained in the support of the negative parts $N_1,\dots, N_r$.
The following is a  generalization of \cite[Theorem 3.2]{SS16} to arbitrary flags. Note that the proof works quite similar as the mentioned one.

\begin{thm}\label{thmglobalok}
Consider the notation introduced above.
The generators of the global Newton-Okounkov body $\Delta_{C\supset\{P\}}(X)$ are given by 
\begin{itemize}
\item  $(1,0,[C])$
\item 
$(0,\operatorname{ord}_{P}(N_{i|C}),[D_i]) \quad \text{for } i=1,\dots, r$
\item $(0,\operatorname{ord}_{P}(N_{i|C})+(P_i\cdot C),[D_i]) \quad \text{for } i=1,\dots, r$.
\end{itemize}
\end{thm}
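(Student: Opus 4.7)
The plan is to show that $\Delta_{Y_\bullet}(X)$ equals the rational polyhedral cone $K$ spanned by the three listed families of elements, by verifying each inclusion separately. The main tool will be the Minkowski-type decomposition
\begin{align}
(t,y,[D])=t\cdot(1,0,[C])+(0,y,[D-tC]),
\end{align}
which trades a movement in the $C$-direction for a translation of the divisor class. For a big class $[D]$ and any $(t,y)\in\Delta_{Y_\bullet}(D)$, Theorem \ref{thmokounkovsurface} gives $t\in[\nu,\mu]$ and $y\in[\alpha_D(t),\beta_D(t)]$. The identities $\alpha_D(t)=\alpha_{D-tC}(0)$ and $\beta_D(t)=\beta_{D-tC}(0)$, immediate from $N_t=N(D-tC)$ and $P_t=P(D-tC)$, reduce the problem to the ``base slice'' at $t=0$ over the class $E:=D-tC$.

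For $t\geq\nu=\operatorname{ord}_C(N(D))$ the curve $C$ no longer lies in $\operatorname{Supp}(N(E))$, so $[E]$ belongs to a Zariski chamber $\mathcal{C}$ of the admissible kind, spanned by a subset of $\{D_1,\ldots,D_r\}$ thanks to rational polyhedrality of the effective cone. This yields $E=\sum_{i\in I}a_i D_i$ with $a_i\geq 0$, and linearity of the Zariski decomposition within $\mathcal{C}$ gives $\alpha_E(0)=\sum_{i\in I}a_i\operatorname{ord}_P(N_{i|C})$ and $\beta_E(0)=\alpha_E(0)+\sum_{i\in I}a_i(P_i\cdot C)$. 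I then choose $y_i\in[\operatorname{ord}_P(N_{i|C}),\operatorname{ord}_P(N_{i|C})+(P_i\cdot C)]$ with $\sum_{i\in I}a_i y_i=y$, which is possible by convexity, to obtain the decomposition $(0,y,[E])=\sum_{i\in I}a_i(0,y_i,[D_i])$. Since each summand $(0,y_i,[D_i])$ is a convex combination of the listed type (2) and (3) generators associated with $D_i$, this exhibits $(t,y,[D])$ as an element of $K$. The reverse inclusion is immediate: $(1,0,[C])$ is the valuation of the defining section $s_C$, and by Theorem \ref{thmokounkovsurface} the two listed points over each $[D_i]$ are the left-hand vertices of $\Delta_{Y_\bullet}(D_i)$ at $t=\nu_{D_i}=0$, where the last equality uses $C\not\subset\operatorname{Supp}(N(D_i))$. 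Taking closures extends the identification to the boundary of the effective cone.

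The main difficulty I foresee is the bookkeeping across chamber walls and the verification that no generators are needed from chambers where $C$ lies in the negative support. For such a chamber, any point $(t,y,[D])\in\Delta_{Y_\bullet}(X)$ automatically satisfies $t\geq\operatorname{ord}_C(N(D))>0$, so peeling off $t\cdot(1,0,[C])$ lands the remainder in an admissible chamber. Combined with the continuity of $\alpha_E(0)$ and $\beta_E(0)$ as $E$ crosses a wall between two admissible chambers, this makes the Minkowski decomposition globally consistent, and the argument then parallels the strategy of \cite[Theorem 3.2]{SS16} but extended from a single line bundle to an arbitrary admissible flag.
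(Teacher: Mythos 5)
Your proposal is correct and follows the same structural skeleton as the paper's proof — peel off the $C$-direction, land in a Zariski chamber whose negative supports avoid $C$, use linearity of the Zariski decomposition over the chamber generators, and interpolate vertically between the two generators attached to each $D_i$ — but you implement it convex-geometrically rather than at the level of sections. The paper argues directly with valuation points: given $s\in H^0(X,\mathcal{O}(D))$ it writes $s=s_C^a\xi$, passes to a multiple and factors $\xi^m=\zeta\sigma$ with $\zeta\in H^0(X,\mathcal{O}(mP^\prime))$ and $\sigma\in H^0(X,\mathcal{O}(mN^\prime))$, computes $\nu(\sigma)$ exactly and locates $\nu(\zeta)$ in $[0,m(P^\prime\cdot C)]$; since valuation points literally generate the cone, no fiber identity or density argument is needed there. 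Your route instead uses the $\alpha/\beta$ description of Theorem \ref{thmokounkovsurface} together with the identity $\Delta_{Y_\bullet}(D)=\Delta_{Y_\bullet}(X)\cap(\mathbb{R}^2\times\{[D]\})$ for big $D$, which buys you freedom from manipulating sections but obliges you to (i) justify the closure step — this is fine, since the big cone is the interior of the projection of the global body, so by convexity every point lying over the pseudo-effective boundary is a limit of points lying over big classes — and (ii) be a little careful at two places where bigness is silently used: the chamber generators $D_i$, and $E=D-tC$ at $t=\mu$, need not be big, so Theorem \ref{thmokounkovsurface} does not literally apply to them; one should work in the closure of the relevant chamber, where the Zariski decomposition still varies linearly, and approximate by big classes using closedness of the global body (the paper glosses its reverse inclusion at the same level of detail). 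These are small completions rather than gaps, so both arguments are sound; the paper's is more self-contained at the semigroup level, yours is shorter on the section-theoretic side and makes the role of the Zariski chamber geometry more transparent.
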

\begin{proof}

It is not hard to see that all the above points are contained in $\Delta_{C\supset\{P\}}(X)$.

Let us now show that all points in $\Delta_{C\supset\{P\}}(X)$ are positive linear combinations of the above points.
It is enough to show that all valuation points are of this kind.
Let $D$ be a big divisor and $s\in H^0(X,\struc(D))$ an arbitrary section. Define $a:=\operatorname{ord}_{C}(s)$, consider $D^\prime:=D-aC$ and set $\xi:=s/s_{C}^a$ where $s_C$ is a defining section of $C$. We have 
\begin{align}
(\nu(s),[D])= a\cdot(1,0,[C])+(\nu(\xi),[D^\prime]).
\end{align}
Therefore, it is enough to show that $(\nu(\xi),[D^\prime])$ is a positive linear combination of the above points.
Let $D_{i_1},\dots, D_{i_s}$ be the generators of the unique Zariski chamber which contains the divisor $D^\prime$.
Then we can write
\begin{align}
D^\prime= \sum_{k=1}^s t_k\cdot D_{i_k}.
\end{align}
Furthermore, for the negative part $N^\prime:=N(D^\prime)=\sum  t_kN_{i_k}$ and $P^\prime:=P(D^\prime)=\sum t_k P_{i_k}$.
By definition of $D^\prime$,  we get that $C$ is not contained in the support of the negative part $N(D^\prime)$.
This also shows that $C$ is not contained in the negative parts of the $D_{i_k}$. 

Let us now choose $m\in \mathbb{Z}$ such that $mN^\prime$ and $mP^\prime$ are both integral.
We can decompose $\xi^m=\zeta \sigma$ for $\zeta \in H^0(X,\mathcal{O}_X(mP^\prime))$ and $\sigma\in H^0(X,\mathcal{O}_X(mN^\prime))$.
Then 
\begin{align}
m\cdot(\nu(\xi),[D^\prime])=(\nu(\zeta)+\nu(\sigma),[mP^\prime+mN^\prime])= (\nu(\zeta),[mP^\prime])+(\nu(\sigma),[mN^\prime]).
\end{align}
Furthermore,
\begin{align}
\nu(\sigma)=(0,\operatorname{ord}_{P}(\sigma_{|C}))=m\cdot\sum_{k=1}^s t_k\cdot (0,\operatorname{ord}_{P}(N_{i_k|C}).
\end{align}
On the other hand, we have
\begin{align}
\nu(\zeta)=(0,bm) \text{ where } b \in [0,P^\prime\cdot C]
\end{align}
Thus there is a $c\in [0,1]$ such that
\begin{align}
\nu(\zeta)= cm\cdot \sum_{k=1}^s t_k\cdot (0,0) + (1-c)m \sum_{k=1}^s t_k\cdot(0,P_k\cdot C).
\end{align}
Putting everything together we get
\begin{align}
m\cdot (\nu(\xi),[D])=& \left(  mc\cdot \sum_{k=1}^s t_k(0,\operatorname{ord}_{P}(N_{i_k|C})) + m(1-c)\bigg(\sum_{k=1}^s t_k(0,\operatorname{ord}_{P}(N_{i_k|C})\right.  \\
& \left. +P_k\cdot C\bigg),\left[ \sum_{k=1}^s t_k\cdot D_{i_k} \right] \right) =\\
=& mc \cdot \sum_{k=1}^s t_k \left(0,\operatorname{ord}_{P}(N_{i_k|C}),[D_{i_k}]\right)+\\
+& m(1-c)\cdot\sum_{k=1}^s t_k\left(0,\operatorname{ord}_{P}(N_{i_k|C}\right)+\left(P_{i_k}\cdot C\right)),[D_{i_k}]).
\end{align}
This proves the claim.
\end{proof}
The next proposition gives a more concrete characterization of the above mentioned divisors $D_i$.
\begin{prop}\label{propzargen}
Let $D$ be a divisor which spans an extremal ray of the closure of a Zariski chamber $\overline{\Sigma_P}$. Then $D$ spans an extremal ray of either the pseudo-effective cone or the nef cone of $X$.
\end{prop}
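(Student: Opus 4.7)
The plan is to apply the Zariski decomposition $D=P(D)+N(D)$ to the extremal divisor and show, via extremality, that one of the two summands must vanish. The key preliminary step is to verify that both $P(D)$ and $N(D)$ themselves lie in $\overline{\Sigma_P}$. For $P(D)$ this follows by considering the family $P(D)+tN(D)$ with $t\in(0,1]$: since $P(D)$ is nef and orthogonal to the support of $N(D)$, the Zariski decomposition can be read off directly, the negative support stays constant along the family, and we can let $t\to 0$. A symmetric argument with $tP(D)+N(D)$ handles $N(D)$. Once this is done, the decomposition $D=P(D)+N(D)$ inside $\overline{\Sigma_P}$ combined with extremality of the ray $\mathbb{R}_{\geq 0}D$ forces one summand to vanish, because $P(D)$ lies in the nef cone while $N(D)$ lies in the cone spanned by the negative definite system of curves in $P$, and these two cones meet only at the origin.

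If $N(D)=0$, then $D$ is nef and lies in $\overline{\Sigma_P}\cap\operatorname{Nef}(X)=\operatorname{Nef}(X)\cap\bigcap_{C\in P}C^\perp$, which is a face of $\operatorname{Nef}(X)$ since each $C^\perp$ is a supporting hyperplane of $\operatorname{Nef}(X)$ for $C$ a negative curve. Extremality of $D$ in $\overline{\Sigma_P}$ therefore passes to this face, and hence to $\operatorname{Nef}(X)$ itself. If instead $P(D)=0$, then $D$ is a non-negative combination of the curves in $P$; a small-perturbation argument using a big nef witness divisor $A$ orthogonal to every $C\in P$ shows that the entire cone $\operatorname{Cone}(P)$ is contained in $\overline{\Sigma_P}$. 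Since the intersection matrix on $P$ is negative definite, the curves in $P$ are linearly independent and $\operatorname{Cone}(P)$ is simplicial with extremal rays given by the individual curves. Extremality of $D$ in $\overline{\Sigma_P}$ then forces $D$ to be a positive multiple of some single $C\in P$, and the standard fact that an irreducible curve with negative self-intersection spans an extremal ray of the pseudo-effective cone closes the argument.

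The main technical obstacle is the careful verification of the containments $P(D),N(D)\in\overline{\Sigma_P}$ and $\operatorname{Cone}(P)\subseteq\overline{\Sigma_P}$. Both rely on producing approximating sequences inside $\Sigma_P$ whose Zariski decompositions can be read off by inspection, which in turn requires the existence of a big nef divisor orthogonal to every curve of $P$ and a careful analysis of how the support of the negative part behaves as one approaches the boundary of the chamber.
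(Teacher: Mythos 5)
Your proposal is correct in substance, but it is organized rather differently from the paper's proof, which is essentially a citation: the paper invokes \cite[Proposition 1.8]{BKS04}, giving $\operatorname{Big}(X)\cap\overline{\Sigma_P}=\operatorname{Big}(X)\cap\operatorname{Face}(P)+V^{\geq 0}(\operatorname{Null}(P))$ with $\operatorname{Face}(P)=\operatorname{Nef}(X)\cap\bigcap_{C\in\operatorname{Null}(P)}C^{\perp}$, and then observes that an extremal ray of this Minkowski sum must be an extremal ray of the face $\operatorname{Face}(P)$ of the nef cone or of the simplicial cone on the curves of $\operatorname{Null}(P)$, hence of the nef or the pseudo-effective cone. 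You instead apply the Zariski decomposition to the extremal class itself and re-derive by hand exactly the inclusions you need: that $P(D)$ is orthogonal to $\operatorname{Null}(P)$ and $N(D)$ is supported there for $D\in\overline{\Sigma_P}$ (a limit argument plus uniqueness of Zariski decomposition), and that $\operatorname{Face}(P)$ and $\operatorname{Cone}(\operatorname{Null}(P))$ lie back in $\overline{\Sigma_P}$; this makes the argument self-contained and makes visible why extremality localizes either to a face of the nef cone or to a single negative curve, at the price of reproving part of the cited structural result. One point needs repair: the family $P(D)+tN(D)$ from your first step does not by itself certify membership in $\overline{\Sigma_P}$, since these classes need not be big (so they lie in no Zariski chamber), and when $\operatorname{Supp}N(D)\subsetneq\operatorname{Null}(P)$ they sit in a different chamber whose closure is in general not contained in $\overline{\Sigma_P}$. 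The correct approximation, which your closing paragraph in effect prescribes, is to perturb into $\Sigma_P$ by the big and nef witness $P$ together with all curves of $\operatorname{Null}(P)$, e.g. $P(D)+\varepsilon\bigl(P+\sum_{C\in\operatorname{Null}(P)}C\bigr)+tN(D)\in\Sigma_P$, and let $\varepsilon\to 0$; with that adjustment your two-case analysis (face of the nef cone, respectively a single negative curve, which is extremal in the pseudo-effective cone) goes through as written.
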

\begin{proof}

Let $P$ be a big and nef divisor. 
Then we define 
\begin{align}
\operatorname{Face}(P)&:=\bigcap_{C\in \operatorname{Null}(P)} C^{\perp}\cap \operatorname{Nef}(X)\\
V^{\geq 0}(\operatorname{Null}(P))&:=\text{Cone}(\operatorname{Null}(P)).
\end{align}
Then by \cite[Proposition 1.8]{BKS04}, we have
\begin{align}
\operatorname{Big}(X)\cap \overline{\Sigma_P}=\operatorname{Big}(X)\cap \operatorname{Face}(P)+ V^{\geq 0}(\operatorname{Null}(P)).
\end{align}
Hence, the extremal rays of $\overline{\Sigma_P}$ are either extremal rays of $\operatorname{Face}(P)$ or of $V^{\geq 0}(\operatorname{Null}(P))$.
However, since $\operatorname{Face}(P)$ is a face of the Nef cone, the first set of extremal rays lies inside the set of extremal rays of the Nef cone.
The extremal rays of  $V^{\geq 0}(\operatorname{Null}(P))$  are all negative, and thus extremal rays of the pseudo-effective cone.

\end{proof}

\begin{remark}
Proposition \ref{propzargen} combined with Theorem \ref{thmglobalok} is in some sense surprising. It shows that in order to calculate the global Newton-Okounkov body on a surface $X$, it is not necessary to know the exact structure of the Zariski chambers. It is not even necessary to compute any Zariski decomposition at all. However, in order to derive the structure of the generators of the global Newton-Okounkov body we heavily relied on the fact that Zariski decomposition as well as the Zariski chamber decomposition does exist.
\end{remark}
\subsection{Finite generation of the global semigroup}
We have seen above, that a smooth surface $X$ with a rational polyhedral pseudo-effective cone admits  rational polyhedral global Newton-Okounkov bodies with respect to all admissible flags. In this section we want to prove a stronger property, namely finite generation of the global semigroup appearing in the construction of global Newton-Okounkov bodies. We will prove this property  for the examples we have dealt with so far.
In order to prove such a statement, we need to consider Newton-Okounkov bodies of effective but not big divisors.
There are two different ways of defining these Newton-Okounkov bodies which both coincide for big divisors. One way is to define it via taking a fiber of the global Newton-Okounkov body. The corresponding body is called the \emph{numerical Newton-Okounkov body}. More concretely, we have
\begin{align}
\Delta_{Y_\bullet}^{num}(D):=\Delta_{Y_\bullet}(X)\cap \left( \mathbb{R}^d\times \{[D]\}\right).
\end{align}
Another way to associate a convex body to an effective divisor  is to just use the same definition as for big divisors. The resulting body is called the \emph{valuative Newton-Okounkov body}. More, concretely we define
\begin{align}
\Delta_{Y_\bullet}^{val}(D):= \overline{\text{Cone}(\Gamma_{Y_\bullet}(D))} \cap \left( \mathbb{R}^d\times\{1\}\right)
\end{align}
where 
\begin{align}
\Gamma_{Y_\bullet}(D):=\{(\nu_{Y_\bullet}(s),k) \ | \ k\in \mathbb{N}, s \in H^0(X,\mathcal{O}(kD))\setminus\{0\} \}.
\end{align}
In general, we have $\Delta_{Y_\bullet}^{val}(D)\neq\Delta_{Y_\bullet}^{num}(D)$. However, if $D$ is big the mentioned equality holds.

\begin{lem}\label{lemnumval} Let $X$ be a smooth Mori dream surface, $D$  an effective divisor on $X$ and $Y_\bullet$ an admissible flag such that $-K_X-Y_1$ is big and nef. Then $\Delta_{Y_\bullet}(D)^{num}=\Delta_{Y_\bullet}(D)^{val}$.
\end{lem}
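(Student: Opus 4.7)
The inclusion $\Delta^{val}_{Y_\bullet}(D) \subseteq \Delta^{num}_{Y_\bullet}(D)$ is immediate from the definitions: any section $s \in H^0(X, \mathcal{O}(kD))$ produces the point $(\nu_{Y_\bullet}(s), k[D]) \in \Gamma_{Y_\bullet}(X)$, whose scaled image $(\nu_{Y_\bullet}(s)/k, 1)$ lies in the slice of $\Delta_{Y_\bullet}(X)$ over $[D]$. The content is the reverse inclusion. Since $X$ is a Mori dream surface, its pseudo-effective cone is rational polyhedral and Theorem \ref{thmglobalok} provides an explicit finite list of generators for the global Newton--Okounkov body $\Delta_{Y_\bullet}(X)$. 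Thus $\Delta^{num}_{Y_\bullet}(D)$ is a rational polygon, and since both bodies are closed it suffices to check that every rational point $p = (v, [D])$ of $\Delta^{num}_{Y_\bullet}(D)$ lies in $\Delta^{val}_{Y_\bullet}(D)$.

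Given such $p$, the plan is to write it as a non-negative rational combination
\begin{align}
p = \lambda_0\,(1, 0, [Y_1]) + \sum_i \bigl(\lambda_i^-\,(0, \alpha_i, [D_i]) + \lambda_i^+\,(0, \beta_i, [D_i])\bigr)
\end{align}
of the generators from Theorem \ref{thmglobalok} (the $D_i = P_i + N_i$ being the chamber generators whose negative support omits $Y_1$), and to realize each generator by an explicit section. The defining section of $Y_1$ realizes the first generator. For each nef chamber generator $D_i$ the hypothesis that $-K_X - Y_1$ is big and nef triggers Kawamata--Viehweg: the divisor $mD_i - Y_1 - K_X = mD_i + (-K_X - Y_1)$ is big and nef for $m \geq 1$, so $H^1(X, \mathcal{O}(mD_i - Y_1)) = 0$ and the restriction $H^0(X, \mathcal{O}(mD_i)) \twoheadrightarrow H^0(Y_1, \mathcal{O}(mD_i|_{Y_1}))$ is surjective. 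Adjunction applied to $-K_X - Y_1$ being nef forces $g(Y_1) \leq 1$, so Riemann--Roch on $Y_1$ then yields sections of $mD_i$ whose $\operatorname{ord}_P$-values hit both endpoints $m\alpha_i$ and $m\beta_i$ for $m$ sufficiently divisible.

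Clearing denominators by passing to a large multiple $k$, the product of these sections with multiplicities $k\lambda_\ast$ is a section of a divisor $E$ satisfying $[E] = k[D]$ in $N^1(X)$ and $\nu_{Y_\bullet}$-valuation exactly $kv$. Since $X$ is a Mori dream surface, $\operatorname{Pic}(X)$ differs from $N^1(X)$ only by torsion, and by enlarging $k$ further we may arrange $E \sim kD$ in $\operatorname{Pic}(X)$. Then $(kv, k) \in \Gamma_{Y_\bullet}(D)$, hence $(v, 1) \in \Delta^{val}_{Y_\bullet}(D)$; density of rational points and closedness of both bodies give the desired equality.

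The main obstacle I expect is handling chamber generators $D_i$ lying on the boundary of the big cone, namely extremal negative curves of $\overline{\operatorname{Eff}}(X)$ or nef classes that are not big, where the Kawamata--Viehweg step above does not directly apply. For a negative curve the unique effective section of $[D_i]$ realizes the generator with valuation $(0,0)$. For a nef-but-not-big $D_i$, semiampleness on a Mori dream surface allows passing to a large multiple where $D_i$ is base-point free, and perturbing by a small ample class followed by a limit argument recovers the required sections. Once these boundary cases are settled, the product-of-sections construction goes through verbatim.
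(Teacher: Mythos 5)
Your route is genuinely different from the paper's: you decompose a rational point of the fiber $\Delta^{num}_{Y_\bullet}(D)$ into the generators of the global body from Theorem \ref{thmglobalok} and then try to realize each generator by explicit sections, whereas the paper argues directly on the (at most one-dimensional) numerical body of a non-big effective divisor via the case analysis of \cite{CPW17}: if $\mu=0$ the body is $\{0\}\times[0,D\cdot Y_1]$ and equals $\{0\}\times\Delta^{val}_{Y_1}(D_{|Y_1})$ because Kawamata--Viehweg makes restriction to $Y_1$ surjective; if $\mu>0$ it is a segment whose two endpoints are reached by a general section of a base-point-free multiple of $D$ and by a section divisible by the defining section of $k\mu Y_1$. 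Most of your steps can be made to work: the worry about nef-but-not-big chamber generators is unnecessary, since $mD_i+(-K_X-Y_1)$ is big and nef and Kawamata--Viehweg applies verbatim; the defining section of a negative curve $C'$ has valuation $(0,\operatorname{ord}_P(C'_{|Y_1}))$, which is exactly the listed generator (not $(0,0)$ in general, but the bookkeeping is unaffected if you use the true value); and for the passage from numerical to linear equivalence you should take a power of the constructed section to kill the torsion class $E-kD$, rather than ``enlarging $k$''.

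The genuine gap is the claim that Riemann--Roch produces sections of $mD_i$ hitting the top endpoint $m\beta_i$ exactly once you know $g(Y_1)\le 1$. Adjunction does give $g(Y_1)\le 1$, but $g(Y_1)=1$ is not excluded by the hypotheses of the lemma (it forces $(-K_X-Y_1)\cdot Y_1=0$ and, by Hodge index, $Y_1^2<0$; no rationality of $Y_1$ is assumed). On an elliptic curve a line bundle $L$ of degree $d$ admits a section vanishing to order exactly $d$ at $P$ only if $L\cong\mathcal{O}(dP)$, and this can fail for every multiple $mD_{i|Y_1}$ unless $D_{i|Y_1}-(D_i\cdot Y_1)P$ is torsion in $\operatorname{Pic}^0(Y_1)$; surjectivity of the restriction map does not help, since the restricted series is then the complete one. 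So the exact realization of the generators, on which your product construction rests, breaks down in the allowed generality. The conclusion survives because you only need equality of closed bodies: the maximal vanishing order for $mD_{i|Y_1}$ is at least $m(D_i\cdot Y_1)-1$, so you can realize the top endpoint up to an error of $1/m$ while keeping the class equal to a multiple of $[D]$, and then let $m\to\infty$ and use closedness of $\Delta^{val}_{Y_\bullet}(D)$; alternatively one can restrict to rational $Y_1$, which covers all applications in the paper. The paper's proof sidesteps this issue entirely, because the only statement it needs on the curve, namely that the Okounkov body of a degree-$d$ divisor on a curve is $[0,d]$, is asymptotic and valid in any genus.
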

\begin{proof}
Without loss of generality we might assume that $D$ is nef.
Following \cite{CPW17}, there are two different cases for $\Delta_{Y_\bullet}^{num}(D)$.
The first case is that $\mu:=\sup \{ t \ :  \ D-tY_1 \text{ is effective }\}$ is equal to $0$.
Then 
\begin{align}
\Delta^{num}_{Y_\bullet}(D)=\{(0,x) \ | \ x\in [0,D\cdot Y_1] \}.
\end{align}
Since $\mu=0$, we can deduce that
\begin{align}
\Delta_{Y_\bullet}^{val}(D)=\{0\}\times \Delta^{val}_{X|Y_1}(D)=\{0\}\times \Delta^{val}_{Y_1}(D).
\end{align}
Note that for the last identity we have used the fact that $H^1(X,D-Y_1)=0$ which means that the restriction morphism $H^0(X,\mathcal{O}_X(D))\to H^0(Y_1,\mathcal{O}_{Y_1}(D))$ is surjective.
However, it easily follows that $\Delta_{Y_1}^{val}(D_{|Y_1})=[0,D\cdot Y_1]$. This proves $\Delta_{Y_\bullet}^{val}(D)=\Delta_{Y_\bullet}^{num}(D)$ in for the case $\mu=0$.

Suppose now that $\mu>0$. Then $\Delta_{Y_\bullet}^{num}$ is given by a line segment $\text{Conv}\{(0,0),(\mu,Q)\}$ for some number $Q\in\mathbb{R}_{\geq 0}$. Since $\Delta_{Y_\bullet}^{val}(D)\subseteq\Delta_{Y_\bullet}^{num}(D)$ it is enough to prove that there are sections $s_1,s_2\in H^0(X,\mathcal{O}(kD)$ such that $\nu_{Y_\bullet}(s_1)=(0,0)$, and $\nu_{Y_\bullet}(s_2)=k(\mu,Q)$.
However, since $D$ is nef and thus semi ample, the first assertion is clear. Moreover, since $D-\mu C$ is effective, the second assertion follows.
This proves the claim.
\end{proof}

\begin{lem}\label{lemsemnorm}
Let $X$ be a smooth del Pezzo surface. Let $Y_\bullet$ be an admissible flag such that $-K_X-Y_1$ is big and nef, and let $Y_1$ be rational, i.e. of genus 0. Then for all effective divisors $D$, the semigroup $\Gamma_{Y_\bullet}(D)$ is finitely generated normal.
\end{lem}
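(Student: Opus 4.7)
The plan is to follow the strategy of Theorem~\ref{thmnormaldegminusone} and prove the stronger slicewise identity
\begin{align*}
\Gamma_{Y_\bullet}(D)=\overline{\text{Cone}(\Gamma_{Y_\bullet}(D))}\cap\mathbb{Z}^3,
\end{align*}
from which normality is immediate while finite generation follows from Gordan's lemma applied to the rational polyhedral cone on the right. Since $X$ is a smooth del Pezzo, every negative curve is a $(-1)$-curve, so by Theorem~\ref{thmzarintegral} $X$ admits integral Zariski decompositions, and additionally $X$ is a Mori dream surface. These two facts make Lemma~\ref{lemnormpoint} (or its evident extension to effective, not necessarily big, $D$) available for a reduction to a general point $P\in Y_1$, and Lemma~\ref{lemnumval} available to identify $\Delta^{val}_{Y_\bullet}(D)=\Delta^{num}_{Y_\bullet}(D)$. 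In particular the relevant Newton--Okounkov body is rational polyhedral even when $D$ is not big, and for general $P$ the function $\alpha_{kD}$ vanishes identically.

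With such a general $P$ fixed, I would analyze, for each $k\geq 1$ and each integer $t\in[0,k\mu]$, the $t$-slice of $k\Delta_{Y_\bullet}(D)$ exactly as in the proof of Theorem~\ref{thmnormaldegminusone}. Write $D_{k,t}:=kD-tY_1=P_{k,t}+N_{k,t}$, which is integral by the integrality of the Zariski decomposition. The distinct $\nu_2$-values of sections of $H^0(X,\struc_X(kD))$ with $\nu_1=t$ agree with the $\operatorname{ord}_P$-values on $H^0(X,\struc_X(P_{k,t}))_{|Y_1}$, after factoring out the fixed section $s_{N_{k,t}}$, which is non-vanishing at the general point $P$. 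Because $P_{k,t}$ is nef and $-K_X-Y_1$ is big and nef, the sum $P_{k,t}+(-K_X-Y_1)$ is big and nef; Kawamata--Viehweg thus yields $H^1(X,\struc_X(P_{k,t}-Y_1))=0$, so the restriction $H^0(X,\struc_X(P_{k,t}))\to H^0(Y_1,\struc_{Y_1}(P_{k,t}))$ is surjective. Riemann--Roch on $Y_1\cong\mathbb{P}^1$ then forces the $\operatorname{ord}_P$-values to exhaust $\{0,1,\dots,P_{k,t}\cdot Y_1\}$, which by Theorem~\ref{thmokounkovsurface} and the choice of general $P$ is precisely the set of integer points in the $t$-slice of $k\Delta_{Y_\bullet}(D)$.

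Assembling all slices gives the displayed semigroup identity and hence normality; Gordan's lemma then supplies finite generation. The main obstacle I anticipate lies in the non-big range of $D$: here I must invoke Lemma~\ref{lemnumval} to justify rational polyhedrality of the NO body, and verify that the slicewise argument still works when $P_{k,t}$ degenerates (to zero, or to a nef class of volume zero). Fortunately, the assumption that $-K_X-Y_1$ is big and nef makes Kawamata--Viehweg apply uniformly to any nef $P_{k,t}$, so the whole argument proceeds without splitting into separate big/non-big cases.
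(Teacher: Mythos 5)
Your proposal matches the paper's intended argument: the paper proves this lemma simply by remarking that the proof of Theorem \ref{thmnormaldegminusone} goes through, i.e.\ integral Zariski decompositions (from Theorem \ref{thmzarintegral}, since all negative curves are $(-1)$-curves), reduction to a general point on $Y_1$, Kawamata--Viehweg vanishing via $-K_X-Y_1$ big and nef to get surjectivity of restriction to $Y_1\cong\mathbb{P}^1$, Riemann--Roch to identify the slice valuation points with the integer points, and Gordan's lemma. Your extra attention to the non-big effective range (rational polyhedrality via Lemma \ref{lemnumval} and the uniform applicability of Kawamata--Viehweg to any nef $P_{k,t}$) is exactly the kind of detail the paper leaves implicit, so the proposal is correct and essentially the same proof.
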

\begin{proof}
This proof works similarly as the proof of Theorem \ref{thmnormaldegminusone}.
\end{proof}

\begin{remark}\label{remsemnorm}
The above lemma is also valid for the varieties $L_3$ and $S_6$, if we take as $Y_1$ the single $(-2)$-curve.
\end{remark}

\begin{thm}
Suppose one of the following situations.
\begin{itemize}
\item $X=X_r$ is the blow-up of $1\leq r\leq 6$ points in general position and $Y_\bullet$ is an admissible flag such that $Y_1$ is negative. 
\item $X=L_3$ or $X=S_6$ and  $Y_\bullet$ is an admissible flag such that $Y_1$ is the corresponding single $(-2)$-curve.
\end{itemize}
Then the global semigroup 
\begin{align}
\Gamma_{Y_\bullet}(X)=\{(\nu_{Y_\bullet}(s),D) \ | \ D\in N^1(X)=\text{Pic}(X), \ s\in H^0(X,\mathcal{O}(D)) \}
\end{align}
 is finitely generated normal.
\end{thm}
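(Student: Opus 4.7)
The plan is to describe the global Newton-Okounkov body explicitly via Theorem \ref{thmglobalok}, deduce rational polyhedrality, and then upgrade normality of the individual fibers (Lemma \ref{lemsemnorm}) to normality of the global semigroup using Gordan's lemma.

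\emph{Rational polyhedrality of the global cone.} All surfaces in the statement are Mori dream surfaces, so $\operatorname{Eff}(X)$ and $\operatorname{Nef}(X)$ have finitely many extremal rays. By Theorem \ref{thmglobalok} combined with Proposition \ref{propzargen}, the generators of $\Delta_{Y_\bullet}(X)$ are $(1,0,[Y_1])$ together with finitely many pairs $(0, \operatorname{ord}_P(N_{i|C}), [D_i])$ and $(0, \operatorname{ord}_P(N_{i|C}) + (P_i \cdot C), [D_i])$, where the $D_i$ range over extremal rays of $\operatorname{Eff}(X)$ or $\operatorname{Nef}(X)$ that do not have $Y_1$ in the support of their negative parts. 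In the case $X_r$ with $r \leq 6$ the Zariski decompositions of the relevant $D_i$ are integral by Theorem \ref{thmzarintegral}; in the $L_3$ and $S_6$ cases only $(-1)$-curves appear in the negative parts of the relevant $D_i$, so integrality holds there as well. Consequently $\mathcal{C} := \overline{\operatorname{Cone}(\Gamma_{Y_\bullet}(X))}$ is a rational polyhedral cone in $\mathbb{R}^d \times N^1(X)_{\mathbb{R}}$, and by Gordan's lemma $\mathcal{C} \cap (\mathbb{Z}^d \times N^1(X))$ is a finitely generated normal semigroup.

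\emph{Identification with valuation points.} The inclusion $\Gamma_{Y_\bullet}(X) \subseteq \mathcal{C} \cap (\mathbb{Z}^d \times N^1(X))$ is immediate. For the reverse inclusion I would work fiberwise over $N^1(X)$: given an integral class $[D]$ in $\operatorname{Eff}(X)$ (we may assume $D$ is effective, since on these surfaces every integral class in the effective cone is represented by an effective divisor), one has to show that the lattice points of the $[D]$-slice of $\mathcal{C}$ coincide with $\nu_{Y_\bullet}(H^0(X, \mathcal{O}(D)))$. For $D$ big this is Lemma \ref{lemsemnorm} (and Remark \ref{remsemnorm} for $L_3$, $S_6$) applied to $\Gamma_{Y_\bullet}(D)$, combined with the standard equality $\Delta_{Y_\bullet}^{val}(D) = \Delta_{Y_\bullet}^{num}(D)$ for big divisors. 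For $D$ on the boundary of $\operatorname{Eff}(X)$ the same argument goes through once one invokes Lemma \ref{lemnumval} to equate the valuative and numerical Newton-Okounkov bodies; the hypothesis that $-K_X - Y_1$ be big and nef needed for that lemma is verified by a direct intersection computation in all the listed cases (for $X_r$ it is recorded in the proof of the negative-curve theorem; for $S_6$ with $Y_1$ the conic one finds $-K_X - Y_1 = H$, and for $L_3$ with $Y_1 = H - E_1 - E_2 - E_3$ one finds $-K_X - Y_1 = 2H - E_4$, both of which are big and nef).

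\emph{Main obstacle.} The delicate part is the boundary case. Away from the boundary of $\operatorname{Eff}(X)$ the theory from earlier in the paper is essentially off-the-shelf, but divisors lying on proper faces behave qualitatively differently: a priori $\Delta_{Y_\bullet}^{num}(D)$ can be strictly larger than $\Delta_{Y_\bullet}^{val}(D)$, so the fiber of $\mathcal{C}$ over $[D]$ might contain lattice points that are not valuations of sections. Closing this gap is the purpose of Lemma \ref{lemnumval}, whose proof ultimately rests on the Kawamata-Viehweg surjectivity of restriction to $Y_1$ afforded by $-K_X - Y_1$ being big and nef; the availability of this lemma is the precise reason why the flags allowed in the statement must have $Y_1$ of the prescribed form.
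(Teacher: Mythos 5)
Your argument is correct and follows essentially the same route as the paper: rational polyhedrality of the global cone via Theorem \ref{thmglobalok}, identification of its lattice points with valuation points fiberwise over $N^1(X)$ using the equality $\Delta_{Y_\bullet}^{num}(D)=\Delta_{Y_\bullet}^{val}(D)$ from Lemma \ref{lemnumval} together with normality of each $\Gamma_{Y_\bullet}(D)$ from Lemma \ref{lemsemnorm} and Remark \ref{remsemnorm}, and then Gordan's lemma. Your explicit verification that $-K_X-Y_1$ is big and nef in each listed case ($H$ for $S_6$, $2H-E_4$ for $L_3$) is a welcome addition but does not change the argument.
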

\begin{proof}
We know, by Theorem \ref{thmglobalok}, that $\Delta_{Y_\bullet}(X)=\overline{\text{Cone}(\Gamma_{Y_\bullet}(X))}$ is rational polyhedral.
We want to prove that $\overline{\text{Cone}(\Gamma_{Y_\bullet}(X))}\cap (\mathbb{Z}^2\times N^1(X))=\Gamma_{Y_\bullet}(X)$. Then the result follows from Gordan's lemma.
Consider $(a,D)\in \overline{\text{Cone}(\Gamma_{Y_\bullet}(X))}$ for $a\in \mathbb{Z}^2$ and $D$ an integral effective divisor in $N^1(X)$.
This means that 
\begin{align}
a\in \Delta^{num}_{Y_\bullet}(D)=\Delta^{val}_{Y_\bullet}(D)=\overline{\text{Cone}(\Gamma_{Y_\bullet}(D))}\cap \left(\mathbb{R}^2\times\{1\}\right).
\end{align}
But by Lemma \ref{lemsemnorm} and Remark \ref{remsemnorm}, $\Gamma_{Y_\bullet}(D)$ is normal. Thus, there is a section $s\in H^0(X,\mathcal{O}(D))$ such that $\nu_{Y_\bullet}(s)=a$. This proves that $(a,D)\in \Gamma_{Y_\bullet}(X)$.

\end{proof}

The finite generation of the global semigroup $\Gamma_{Y_\bullet}(X)$ has the following consequences for the Cox ring $\text{Cox}(X)$.

\begin{thm}\label{thmcox}
Let $X$ be a  $\mathbb{Q}$-factorial variety with $N^1(X)=\operatorname{Pic}(X)$. Let $Y_\bullet$ be an admissible flag.
Suppose $\Gamma_{Y_\bullet}(X)$ is finitely generated by 
\begin{align*}
(\nu_{Y_\bullet}(s_1),D_1),\dots (\nu_{Y_\bullet}(s_N),D_n).
\end{align*}
 Then the Cox ring $\operatorname{Cox}(X)$ is generated by the sections $s_1,\dots, s_N$.
\end{thm}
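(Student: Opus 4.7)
The plan is to show that an arbitrary section $s \in H^0(X,\mathcal{O}(D))$ for an arbitrary $D \in \operatorname{Pic}(X)$ lies in the subalgebra $R \subseteq \operatorname{Cox}(X)$ generated by $s_1, \ldots, s_N$, by induction on the valuation $\nu_{Y_\bullet}(s) \in \mathbb{Z}^d$ with respect to the lexicographic order. Since for each fixed $D$ the vector space $H^0(X,\mathcal{O}(D))$ is finite dimensional, the set of possible valuation values $\{\nu_{Y_\bullet}(s) : 0 \neq s \in H^0(X,\mathcal{O}(D))\}$ is finite, and hence is well-ordered from above — this will be what makes the induction terminate.

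For the induction step, fix $s \in H^0(X,\mathcal{O}(D)) \setminus \{0\}$. By construction, $(\nu_{Y_\bullet}(s),D) \in \Gamma_{Y_\bullet}(X)$, so by the finite generation hypothesis I can write
\begin{align}
(\nu_{Y_\bullet}(s),D) = \sum_{i=1}^N a_i\,(\nu_{Y_\bullet}(s_i),D_i)
\end{align}
with $a_i \in \mathbb{N}$. In particular $\sum a_i D_i = D$ in $\operatorname{Pic}(X)$, so the monomial $s' := \prod_i s_i^{a_i}$ is (via the fixed trivializations defining $\operatorname{Cox}(X)$) a section of $\mathcal{O}(D)$, and by multiplicativity of $\nu_{Y_\bullet}$ we have $\nu_{Y_\bullet}(s') = \sum a_i\,\nu_{Y_\bullet}(s_i) = \nu_{Y_\bullet}(s)$. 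The one-dimensional leaves property of $\nu_{Y_\bullet}$ then yields a unique scalar $c \in k$ such that either $s - c\,s' = 0$, or $\nu_{Y_\bullet}(s - c\,s')$ is strictly greater than $\nu_{Y_\bullet}(s)$ in the lex order.

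In the first case $s = c\,s' \in R$. In the second case, the inductive hypothesis (valid since there are only finitely many valuation values in $H^0(X,\mathcal{O}(D))$ strictly larger than $\nu_{Y_\bullet}(s)$, and the claim is vacuously true at the top) gives $s - c\,s' \in R$, hence $s = c\,s' + (s - c\,s') \in R$. Since this argument applies to every $D$ and every section, we conclude $R = \operatorname{Cox}(X)$.

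The only real subtlety — and the one place where care is needed — is the bookkeeping for the Cox ring multiplication: writing $\sum a_i D_i = D$ is an equality in $\operatorname{Pic}(X)$, and to regard $\prod s_i^{a_i}$ as an element of $H^0(X,\mathcal{O}(D))$ one has to invoke the chosen system of trivializations that defines $\operatorname{Cox}(X)$ as a ring; the hypothesis $N^1(X) = \operatorname{Pic}(X)$ (in particular no torsion in $\operatorname{Pic}(X)$) is what guarantees that such a consistent choice exists. Once this is granted, the induction on $\nu_{Y_\bullet}$ using one-dimensional leaves is essentially a standard SAGBI-type lifting argument, and the whole statement reduces to the finite generation of $\Gamma_{Y_\bullet}(X)$ established earlier in the paper.
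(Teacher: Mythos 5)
Your argument is correct, and it reaches the conclusion by a slightly different route than the paper. Both proofs hinge on the same key consequence of finite generation: for every valuation point $(a,[D])\in\Gamma_{Y_\bullet}(X)$, writing it as an $\mathbb{N}$-combination of the generators produces a monomial $\prod s_i^{a_i}$ lying in the degree-$D$ piece $H^0(X,\mathcal{O}(D))$ of the Cox ring with $\nu_{Y_\bullet}\bigl(\prod s_i^{a_i}\bigr)=a$. Where you differ is in the finish. The paper simply counts: since the number of distinct valuation values on $H^0(X,\mathcal{O}(D))\setminus\{0\}$ equals $h^0(X,\mathcal{O}(D))$, one obtains that many monomials with pairwise distinct values inside $R\cap H^0(X,\mathcal{O}(D))$, and by \cite[Proposition 2.3]{KK12} sections with distinct values are linearly independent, so these monomials already form a basis of $H^0(X,\mathcal{O}(D))$ and the degree-$D$ piece lies in $R$ with no induction needed. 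You instead run the classical SAGBI-type subduction: given an arbitrary section $s$, subtract a scalar multiple of the matching monomial, use one-dimensional leaves to strictly increase the value, and induct over the finite set of values occurring in $H^0(X,\mathcal{O}(D))$. The two finishes are essentially equivalent in substance (the cited linear-independence statement is itself proved by the same leading-term cancellation), but the paper's dimension count is shorter, while your subduction is self-contained and constructive, in that it yields an explicit algorithm expressing $s$ as a polynomial in $s_1,\dots,s_N$. Your side remarks are also accurate: the termination of the induction only needs finiteness of the value set (which already follows from the superadditivity property of $\nu$ on a finite-dimensional space), and the hypothesis $N^1(X)=\operatorname{Pic}(X)$ is exactly what makes $\operatorname{Pic}(X)$ free so that the Cox ring multiplication is defined consistently — a point the paper's proof uses tacitly.
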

\begin{proof}
Let $R$ be the $\mathbb{C}$-algebra which is generated by the sections $s_1,\dots,s_N$.
Let $D$ be any effective divisor in $X$. Let $k=h^0(X,\mathcal{O}_X(D))=\vert \nu_{Y_\bullet}(H^0(X,\mathcal{O}_X(D))\setminus\{0\})\vert$.
Since the $(\nu_{Y_\bullet}(s_1),D_1),\dots (\nu_{Y_\bullet}(s_N),D_n)$ generate $\Gamma_{Y_\bullet}(X)$, it follows that there are $f_1,\dots,f_k\in R\cap H^0(X,\mathcal{O}_X(D))\setminus\{0\}$ which all have a different valuation. But then it follows from \cite[Proposition 2.3]{KK12}
that $f_1,\dots, f_k$ are linearly independent. This proves that they form a basis of $H^0(X,\mathcal{O}_X(D))$ and that every section $s\in H^0(X,\mathcal{O}(D))$ lies in the algebra $R$. This show that $R\cong \operatorname{Cox}(D)$.
\end{proof}

\subsection{Examples of global Newton-Okounkov bodies and global semigroups}
In this last paragraph we want to consider  two concrete examples and compute their global Newton-Okounkov bodies. In the second example we also present generators of the global semigroup and use them to find generators of the Cox ring.

\begin{ex} First of all we consider the del Pezzo surface $X_5$, which is the blow-up of five points in general position in $\mathbb{P}^2$.
As a flag, we take the negative curve $C:=H-E_1-E_2$, and a general point on it.
According to Theorem \ref{thmglobalok}, we need to compute all ray generators of Zariski chambers, whose support of the negative part does not contain the negative curve $C$. Using Proposition \ref{propzargen}, these are given by all the negative curves except the curve $C$ and the generators of the extremal rays of the nef cone.

With the help of a computer calculation, we compute the global Newton Okounkov body and present the  resulting hyperplane representation in $\mathbb{R}^2\times N^1(X_5)_\mathbb{R}\cong \mathbb{R}^8$. Choosing $H,E_1,\dots,E_5$ as a basis for $N^1(X_5)_\mathbb{R}$ we get the following representation for $\Delta_{Y_\bullet}(X_5)$:
\begin{align}
\tiny
\begin{pmatrix}
1& 0& 0& 0& 0& 0& 0& 0 \\  1& -1& 1& 1& 1& 0& 0& 0 \\  0& -1& 2& 1& 1& 0& 1& 1 \\  -1& -1& 3& 1& 1& 1& 1& 2 \\  0& -1& 2& 1& 1& 1& 0& 1 \\  -1& -1& 2& 0& 1& 1& 1& 0 \\  0& -1& 1& 0& 1& 0& 0& 0 \\  -1& -1& 3& 1& 1& 2& 1& 1 \\  -1& -1& 3& 1& 1& 1& 2& 1 \\  0& -1& 2& 1& 1& 1& 1& 0 \\  -1& -1& 2& 0& 1& 0& 1& 1 \\  -1& -1& 2& 0& 1& 1& 0& 1 \\  0& -1& 1& 1& 0& 0& 0& 0 \\  -1& -1& 2& 1& 0& 1& 1& 0 \\  -1& -1& 2& 1& 0& 1& 0& 1 \\  -2& -1& 2& 0& 0& 1& 0& 1 \\  -2& -1& 2& 0& 0& 1& 1& 0 \\  -1& -1& 1& 0& 0& 0& 0& 0 \\  -1& -1& 2& 1& 0& 0& 1& 1 \\  -2& -1& 2& 0& 0& 0& 1& 1 \\  -2& -1& 3& 1& 0& 1& 1& 2 \\  -2& -1& 3& 1& 0& 1& 2& 1 \\  -2& -1& 3& 1& 0& 2& 1& 1 \\  -2& -1& 3& 0& 1& 1& 1& 2 \\  -2& -1& 3& 0& 1& 1& 2& 1 \\  -2& -1& 3& 0& 1& 2& 1& 1 \\  -2& -1& 4& 1& 1& 2& 2& 2 \\  -1& 0& 2& 1& 0& 1& 1& 1 \\  0& 1& 0& 0& 0& 0& 0& 0 \\  -1& 0& 1& 0& 0& 1& 0& 0 \\  -1& 0& 1& 0& 0& 0& 0& 1 \\  -1& 0& 1& 0& 0& 0& 1& 0 \\  -1& 0& 2& 0& 1& 1& 1& 1 \\  -3& -1& 4& 0& 1& 2& 2& 2 \\  -3& -1& 4& 1& 0& 2& 2& 2 \\  -4& -1& 4& 0& 0& 2& 2& 2 \\  -3& -1& 3& 0& 0& 2& 1& 1 \\  -3& -1& 3& 0& 0& 1& 1& 2 \\  -3& -1& 3& 0& 0& 1& 2& 1
\end{pmatrix}\cdot (x_1,\dots, x_8)^T\leq 0.
\end{align} 
It is a convex cone in $\mathbb{R}^8$ which is defined by a minimal number of $39$ inequalities or a minimal number of $22$ rays.
Note that the above equations give an Ehrhart type formula for the Hilbert polynomial of a given divisor $D=(x_3,\dots,x_8)$ similar to the one  derived in 
\cite[Example 1.3]{SX10}. 

\end{ex}
\begin{ex}
Consider now $L_3$, which is the blow-up of four points such that three of them lie on a line. Let us suppose that $P_1,\dots, P_3$ lie on a line. We choose $H,E_1,\dots, E_4$ as our basis for $N^1(L_3)_\mathbb{R}$.
Then the global Newton-Okounkov body of $L_3$ with respect to the curve $C=H-E_1-E_2-E_3$ and a general point on it is given by the following linear inequalities:
\begin{align}
\begin{pmatrix}
1& 0& 0& 0& 0& 0& 0\\2& -1& 1& 1& 1& 1& 0\\1& -1& 1& 0& 1& 1& 0\\1& -1& 1& 1& 0& 1& 0\\1& -1& 1& 1& 1& 0& 0\\-1& -1& 1& 0& 0& 0& 0\\0& -1& 1& 0& 0& 1& 0\\0& -1& 1& 0& 1& 0& 0\\0& -1& 1& 1& 0& 0& 0\\0& 1& 0& 0& 0& 0& 0\\-1& 0& 1& 0& 0& 0& 1
\end{pmatrix}\cdot (x_1,\dots,x_7)^T \geq 0.
\end{align}
The ray generators of the global Newton-Okounkov body are
\begin{align}
&(0, 0, E_4),\ (0, 0, E_3), \ (0, 0, E_2),\ (0, 0, E_1),\ (0, 0, H-E_1-E_4),\\ 
& ( 0, 0, H-E_2-E_4),\
 (0, 0, H-E_3-E_4),\ (0, 1, H-E_4),\\
 & ( 1, 0, H-E_1-E_2-E_3).
\end{align}
A calculation shows that these generators, form a Hilbert basis, so that they are a generating set of the global semigroup $\Gamma_{Y_\bullet}(L_3)$.
It follows from Theorem \ref{thmcox}, that $\text{Cox}(L_3)$ is generated by the following sections:
\begin{itemize}
\item the negative curves
\item the strict transform of a general line going through $P_4$.
\end{itemize}

\end{ex}

\end{document}